\newcommand{\dd}{\mathrm{d}}
\newcommand{\x}{{{x}}}
\renewcommand{\P}{\mathcal{P}}
\newcommand{\C}{\mathcal{C}}
\renewcommand{\L}{\mathcal{L}}
\newcommand{\Q}{\mathcal{Q}}
\newcommand{\R}{\mathbb{R}}
\newcommand{\mmu}{\overline{\mu}}
\newcommand{\nnu}{\overline{\nu}}
\newcommand{\virg}[1]{``#1''}
\newcommand{\Span}[1]{\left\langle#1\right\rangle}
\newcommand{\Th}{^\textrm{th}}
\newcommand{\St}{^\textrm{st}}
\newcommand{\rank}{\textrm{\normalfont rank}\,}
\def\interno{\vbox{\hbox{\vbox to .3 truecm{\vfill\hbox to .25 truecm
{\hfill\hfill}\vfill}\vrule}\hrule}\hskip 2pt}
\newcommand{\dens}[1]{  \partial_{#1}   {\textrm{\scalebox{1.5}{$\lrcorner$}}} {\mathrm d}^n\x }
\newtheorem{theorem}{Theorem}
\newtheorem{proposition}{Proposition}
\newtheorem{corollary}{Corollary}
\newtheorem{definition}{Definition}
\theoremstyle{remark}
\newtheorem{remark}{Remark}
\newtheorem{example}{Example}
\let\phi\varphi
\let\cal\mathcal
\begin{document}

\markboth{J.~Kijowski and G.~Moreno}
{Symplectic structures related with higher order variational problems}

%%%%%%%%%%%%%%%%%%%%% Publisher's Area please ignore %%%%%%%%%%%%%%%
%
%\catchline{}{}{}{}{}
%
%%%%%%%%%%%%%%%%%%%%%%%%%%%%%%%%%%%%%%%%%%%%%%%%%%%%%%%%%%%%%%%%%%%%

\title{Symplectic structures related with higher order variational problems}

\author{Jerzy Kijowski}\thanks{The first author was partially supported by Narodowe Centrum Nauki, Poland (NCN Grant DEC-2011/03/B/ST1/02625).}

\address{Center for Theoretical Physics,\\
Polish Academy of Sciences,\\
Al. Lotnik\'ow 32/46,\\
02--668 Warsaw, Poland.\\
\texttt{kijowski@cft.edu.pl}
}

\author{Giovanni Moreno}\thanks{The second author is  thankful to the   Grant Agency of the Czech Republic (GA \v CR) for financial support under the project P201/12/G028.}

\address{Mathematical Institute in Opava,\\
Silesian University in Opava,\\
Na Rybn\'{\i}\v{c}ku 626/1, 746 01 Opava, Czech Republic.\\
\texttt{Giovanni.Moreno@math.slu.cz}
}

\maketitle

%\begin{history}
%\received{(Day Month Year)}
%\revised{(Day Month Year)}
%\end{history}

\begin{abstract}
In this paper we derive the symplectic framework for field theories defined by higher--order Lagrangians. The construction is based on the symplectic reduction of  suitable spaces of iterated jets. 
 
The possibility of reducing a higher--order system of PDEs to a constrained
first--order one, the  symplectic structures naturally arising in the
dynamics of a first--order Lagrangian  theory, and the importance  of the
Poincar\'e--Cartan form for variational problems, are all well--established
facts. However, their adequate combination corresponding to higher--order theories is missing in the literature. Here we obtain   a   consistent and truly finite--dimensional canonical formalism, as well as a higher--order version of the
Poincar\'e--Cartan form.  In our exposition, the rigorous
global proofs of the main results are always accompanied  by their local coordinate
descriptions,    indispensable   to work out practical
examples. 
\end{abstract}

\keywords{Higher derivative field theory; fibre bundles; jet bundles; Lagrangian formalism; canonical field theory;  multi--symplectic geometry; constraints; iterated jets; Poincar\'e--Cartan form.}
\subjclass{
53B50; %(differential geometry: application to physics)
53C80; %(global differential geometry: application to physics
70S05; %(Classical field theories: Lagrangian formalism and Hamiltonian formalism)
58A20; %(jets)
35A99; %(PDEs)
53D20; %(momentum maps, symplectic reduction)
53D05. %(symplectic manifolds, general)
}

\tableofcontents

\section{Introduction}

Symplectic geometry was born in Classical Mechanics as a framework to describe its canonical structure, i.e.,~ Poisson brackets, canonical transformations, Hamilton--Jacobi theory, etc. It provides also the basic notions of Quantum Mechanics, according to W.~Heisenberg. Its geometric version, known as {\em Geometric Quantization Theory} (see, e.g.,  \cite{Sou}) is an important tool in the theory of group representations.\par
But also fields (e.g.,~electromagnetic or gravitational) have to be quantized. For this purpose people usually replace the finite--dimensional symplectic space of Cauchy data in Mechanics by its infinite--dimensional counterpart.  This way, a  hyperbolic field theory can be viewed as an infinite dimensional Hamiltonian system, together with its Poisson brackets, canonical transformations and even some analog of the Hamilton--Jacobi theory (see Dedecker \cite{Ded53,MR748931,MR761267}). To describe the invariance of its structure upon the choice of the Cauchy surface (initial--value--surface) in space--time, the notion of a {\em multi--symplectic geometry} was introduced by one of us (JK) (see, e.g., \cite{multi,multi1}), which proved to be an effective tool. It was later used in different contexts by many authors \cite{GIM,GIM2}.\par
All the \virg{symplectic} results within the multi--symplectic approach are obtained {\em via} ``integration by parts'', where the surface integrals are assumed to vanish {\em a priori} due to ``appropriate fall-off conditions at infinity''. This makes the multi--symplectic approach conceptually inadequate in those contexts, like,  e.g., General Relativity, where all the volume integrals, being gauge--dependent, have no physical significance, and the  only meaningful information is carried by the boundary integrals, like the  A.D.M.~energy (cf.~\cite{aff}). But also in special--relativistic field theories boundary integrals play an important role: the field energy (Hamiltonian) within a finite volume $V$ cannot be uniquely defined unless we specify boundary conditions on its boundary $\partial V$. Different Hamiltonians differ by a surface integral.\par
It turns out that all these drawbacks can be eliminated if we observe that the entire multi--symplectic structure constitutes merely a particular aspect of a much richer structure which is: 1) truly {\em symplectic} and 2) finite--dimensional. Some aspects of this structure were already noticed  in \cite{multi} and \cite{multi1}, but the complete formulation of the theory was given in \cite{KT79}. The {\em multi--symplectic} aspect is completely covered by the so called \virg{canonical Poincar\'e--Cartan form} of the theory.\par
This symplectic theory was based on the observation  that the space ${\cal P}_x^I$ of jets of sections of the Hamiltonian field theory carries a {\em canonical} symplectic structure  at each space--time point $x \in M$ separately, whereas the jets of the solutions of the field equations fill up   its Lagrangian submanifold ${\cal D}_x \subset {\cal P}_x^I$.
More precisely, systems of (nonlinear) partial differential equations (PDE) on a base manifold $M$ can be always considered as a collection ${\cal D} \subset J^k {\cal P}$ of ``admissible jets'' of sections of a certain bundle ${\cal P} \rightarrow M$.
In such a perspective, a section $s$ of ${\cal P}$ satisfies the given PDE at a point $x \in M$ if and only if $j^k s (x) \in {\cal D}$.
Our theory applies to a special class of PDEs, where every fibre $J_x^1 {\cal P}$ carries a natural (canonical) symplectic (or pre--symplectic) structure and the dynamics ${\cal D}_x$ constitutes  a Lagrangian (i.e.,~maximal, isotropic) submanifold of the fibre. It is well--known that this class contains the (systems of)  Euler--Lagrange equations associated with a first--order variational principle    \cite{KT79}, but it also goes   beyond  the  mere  calculus of variations,  encompassing cases admitting multiple variational principles, none of   them being more fundamental than the other ones. For instance, in the theory of   General Relativity,   at least four different variational approaches have been proposed \cite{aff,aff2,aff1}, based on:
\begin{enumerate}
\item  the  Hilbert Lagrangian, which depends upon the metric and its derivatives up to the \emph{second order}, \item  the  Einstein Lagrangian, which depends upon the metric and its \emph{first--order} derivatives, \item  the  Palatini Lagrangian, which depends upon \emph{both} the metric \emph{and} the connection, treated {\em a priori} as independent variables, together with their first--order derivatives,    \item     the ``affine'' Lagrangian, which depends  upon the \emph{connection and its first--order derivatives} contained in the corresponding curvature tensor.
\end{enumerate}
It turns out that the phase bundle of the theory remains the same, namely, the tensor product of  the bundle of metrics  by the bundle of connections over the space--time $M$, no matter which variational formulation is chosen. Passing from one formulation to another, the role of positions and momenta is interchanged, but the symplectic structure of the theory remains unchanged. This appears to be a general rule: except for some simple and academic examples, the  particular variational principle used to derive the field equations plays no role: what counts are the field equations, together with the underlying symplectic structure. In particular, ``adding a complete divergence'' (even of an arbitrary high order) to the Lagrangian does not change this structure. Even if at the beginning additional (artificial) momenta arise, the resulting symplectic space is degenerate. We show in Section \ref{secModLagrTotDiv} that the quotient space with respect to this degeneracy is equal to the original symplectic structure.

\par

In the present paper we give a complete description of the symplectic structures related with higher--order Lagrangians. However, by an appropriate choice of variables, any PDE system can always be reformulated as a first order system, possibly with constraints. So, one may insist that the description given in \cite{KT79} is sufficient, since it   only remains to  handle appropriately those constraints.  It should be stressed, however, that  these constraints are very special and no adequate symplectic treatment has been proposed in the literature (on this concern, see also \cite{Luca2011} and references therein). Consequently, canonical formulation of higher order Lagrangian theory was never formulated in a consistent way. To our best knowledge, the correct notion of the field energy and the construction of the Poincar\'e--Cartan form, which is obtained here as a simple corollary, was never done for higher--order variational problems.

\subsection{Structure of the paper}
In the preliminary Section \ref{SecToyModel} we briefly summarize the well--known construction of the \emph{infinitesimal phase bundle} $\mathcal{P}^I$ for a first--order variational problem, showing that it carries a canonical symplectic structure with respect to which the Euler--Lagrange equations become the generating \emph{formulae}  for a Lagrangian submanifold. Besides paving the way to the higher--order case, Section \ref{SecToyModel} will also serve the purpose of introducing and explaining the main notations and conventions used throughout the whole paper. In Section \ref{expansion} we briefly sketch the role of constraints, both the {\em momentum constraints} and the {\em Lagrangian constraints}, and show how to remove the irrelevant degrees of freedom of the theory {\em via} its symplectic reduction. This ``philosophy'' is employed in Section \ref{SecHiOrd}, where a theory with higher--order Lagrangian is treated as a first--order theory with Lagrangian constraints. Such an approach simplifies considerably the theory and allows us to go over the same steps presented in Section \ref{SecToyModel}. At the end, however, an additional symplectic reduction with respect to the degeneracy implied by the constraints is necessary.
Section \ref{SecByGiovanni} is entirely dedicated to the proof of the equivalence between the infinitesimal phase bundle and, so to speak, the ``space of vertical differentials of Lagrangian densities'', which is the key result used in the preceding Sections   \ref{SecToyModel}  and \ref{SecHiOrd}.
In Section \ref{SecPoinCart} we go over the classical notion of Poincar\'e--Cartan form for first--order Lagrangian field theories. In particular, we give a simple example illustrating a misunderstanding concerning the invariance of the Poincar\'e--Cartan form with respect to Galileian transformations in Mechanics, which has frequently  led  to serious errors in  papers dealing with canonical field theory. The ultimate goal of   Section \ref{SecPoinCart} is the higher--order version of the Poincar\'e--Cartan theory.  The proof of the identification $VJ^1=J^1V$ is carried out in the language of nonlinear differential operators between fibre bundles (see Section \ref{appVJ1UgJ1V}). Being very technical, it was moved to the Appendix. Finally, we add a handy list of symbols (see Section \ref{subListSymb}) to help the reader to keep track of the many objects involved.

We stress that, in order to  perform important steps like, e.g.,  defining gravitational energy or   preparing general relativity theory for quantization, physicists   \emph{must} rely on coordinate calculations. This is the  reason why all the formal definitions and theorems presented here are complemented by a detailed coordinate descriptions of the structures involved.

\section{First--order variational  problems: a remainder}\label{SecToyModel}
Here, as everywhere else in the paper, $M$ denotes the $n$--dimensional manifold of independent variables (i.e., the space--time underlying the theory) and $x^\mu $ are its coordinates.\par
To deal with a first--order variational problem we need a   \emph{configuration   bundle}, i.e., a bundle  $\pi:\Q\to M$, whose fibre $\Q_x$ at $x\in M$ represents all possible values of the fields at $x$. Fibre coordinates of $\Q$ are denoted by $  \varphi^K$, $K=1,\dots , N$, where $N$ denotes the number of dependent variables of the theory.
In the next Sections \ref{ToyStep1},  \ref{ToyStep2} and  \ref{ToyStep3} we carry out a parallel construction to the one which leads to $TT^\ast \Q$ in Mechanics, proving, in Theorem \ref{ThToyModel}, that the result  is the same as ``going the other way'', i.e., leading to  $T^\ast T\Q$ instead. Much as in Mechanics, the Euler--Lagrange equations, together with the definition of the canonical momenta, will take the shape of the generating \emph{formulae}  for a Lagrangian submanifold. It should be stressed that, in spite of the evident parallelism with Mechanics, here all the canonical forms are, by their nature, \emph{vector--density--valued}.
\subsection{Vector densities on $M$}
Differential $(n-1)$--forms on $M$ are (smooth) sections of the bundle of $(n-1)$--covectors on $M$, henceforth denoted by
\begin{equation}\label{eqLambdaEnneMenoUno}
\Lambda^{n-1} M := \bigwedge^{n-1}\left( T^* M \right)=\underset{n-1\textrm{ times}}{\underbrace{T^\ast M\wedge\cdots\wedge T^\ast M}}\, .
\end{equation}
Similarly, the bundle of volume forms ($n$--forms) on $M$ is denoted by $\Lambda^{n} M $.
To write down   sections of $\Lambda^{n-1} M $ and $\Lambda^{n} M $ in local coordinates, we use the following symbols:
\begin{eqnarray}
  % \nonumber to remove numbering (before each equation)
   \partial_\mu &:=& \frac{\partial}{\partial x^\mu} \ ,\nonumber\\
    \mbox{\rm d}^n x &:=& \mbox{\rm d}  x^1 \wedge \cdots
    \wedge \mbox{\rm d}  x^n \ , \nonumber\\
    \dens{\mu} &=& % \partial_\mu
     (-1)^\mu\mbox{\rm d}  x^1 \wedge \cdots \wedge {\rm d}  x^{\mu-1}\wedge {\rm d}  x^{\mu+1}\wedge\cdots
    \wedge \mbox{\rm d}  x^n  \ .\label{defDEnneMenoUnoICS}
\end{eqnarray}
So, {\em modulo} obvious considerations concerning orientation (internal {\em versus} external, see, e.g.,~\cite{CJK}),   sections of \eqref{eqLambdaEnneMenoUno} can be safely  identified with {\em vector densities} on $M$, whereas sections of $\Lambda^{n} M $ will be refered to as {\em scalar densities}.
\subsection{Phase bundle and   canonical forms}\label{ToyStep1}
%The entire construction is based on a configuration bundle $\pi :\Q \rightarrow M$, i.e.,~the bundle of the \virg{dependent variables} of the theory. Sections of $\Q$ represent the possible field configurations of the theory. Its fibre $\Q_x$ at $x\in M$ describes the space of all possible values which our fields $\varphi$ can assume at the point $x \in M$.\par
%
%
The next step is to introduce the field momenta at $x\in M$. They are described by  vector--density--valued covectors on $\Q_x$, i.e.,~elements of the space
\begin{equation}\label{eqDefPhaseBundlePunto}
  {\cal P}_x :=   T^* \Q_x  \otimes_\R \bigwedge^{n-1}
 T^*_x M \ .
\end{equation}
The collection of all these spaces forms the bundle
\begin{equation}\label{eqDefPhaseBundle}
  {\cal P} :=  V^*\Q  \otimes_\Q \bigwedge^{n-1} M\ ,
\end{equation}
where $V\Q$ is the \emph{vertical bundle} of $\Q$: $(V\Q)_q:=T_q\Q_{\pi(q)}$, for all $q\in \Q$, and $V^*\Q$ is its dual.
Indeed, in view of \eqref{eqLambdaEnneMenoUno}, the   bundle $\mathcal{P}$   defined by \eqref{eqDefPhaseBundle} can be restricted to the fibre $\Q_x$ of $\Q$ over $x$, yielding ${\cal P}_x :={\cal P}|_{\Q_x}$.

Hence, ${\cal P}$ can be considered both as a bundle over $\Q$ (according to \eqref{eqDefPhaseBundle}) and as a bundle over $M$ (according to \eqref{eqDefPhaseBundlePunto}). However, in the first case, it is linear, and $\mathrm{rank}\,{\cal P}=\dim \Q+\dim M$, whereas in the second case, it is, in general, nonlinear, and $\mathrm{rank}\,{\cal P}=2\dim \Q+\dim M$.
\begin{definition}
 The bundle $\mathcal{P}$ defined by \eqref{eqDefPhaseBundle} is called the \emph{phase bundle}.
\end{definition}

The basic tool of our construction is  the notion of the \emph{vertical differential}, which is denoted by $\delta$ and corresponds to what is called the \virg{variation} in the classical calculus of variations (see, e.g., \cite{MR1368401,MR2004181}).

\begin{definition}
Given a bundle $B$ over $M$, the vertical differential $\delta$ is the restriction of the external derivative operator $d$, defined on the bundle manifold $B$, to each fibre $B_x$ separately, for all $x\in M$.
\end{definition}

When applied to a function $f$ on $B$, the value of $\delta f$ is, therefore, an element of the space $V^*B$. The notion of vertical derivative will be used for both the configuration bundle $\pi : \Q \rightarrow M$ and the phase bundle ${\cal P}$, and not only for the functions but also for the differential forms living on these spaces.

In particular, the \virg{vertical forms} $\delta \varphi^K $ provide a base of $V^*\Q $. Hence, using the base elements \eqref{defDEnneMenoUnoICS} and the fibre coordinates  $\varphi^K$ of $\Q$, the generic section $p$ of $\mathcal{P}$ can be written in coordinates as
\[
    p = p^\mu_{\ K} \delta \varphi^K \otimes \dens{\mu} \ .
\]
When it does not lead to any misunderstanding, we may skip the index $K$, which  labels the unknown fields $\varphi^K$ and the corresponding components $p^\mu_K$ of the momenta, so that the last expression reads
\begin{equation}\label{typical-p}
    p %= p^\mu_{\ K} \delta \varphi^K \otimes \dens{\mu}
    =p^\mu \delta \varphi \otimes \dens{\mu} \ .
\end{equation}
Recall that, by its definition \eqref{eqDefPhaseBundle}, the phase bundle ${\cal P}$ is a bundle over the configuration bundle, i.e., there is a projection $\Pi: \P \rightarrow Q$. So, any (vector--density--valued) covector $p$ on $\Q$ can be pulled back to $\P$. This way we obtain a canonical (vector--density--valued) one--form on $\P$, defined by
\[
    \theta_p := \Pi^* p, \quad p\in \P \ .
\]
In   coordinates, $\theta$ can be written (with an obvious abuse of language) by the same formula \eqref{typical-p}:
\begin{equation}
  \theta= p^\mu \delta \varphi \otimes
\dens{\mu}\ . \label{theta}
\end{equation}
(The obvious difference between these formulae is that $\delta \varphi$ denotes in \eqref{typical-p} a covector on $\Q$, whereas in \eqref{theta} it denotes its pull--back to $\P$.)
Its vertical differential
\begin{equation}
  \omega := \delta \theta  =
  \left( \delta p^\mu \wedge \delta \varphi \right) \otimes
\dens{\mu}  \label{omega}
\end{equation}
is the progenitor of the symplectic form we are trying to construct.

\begin{remark}
The form \eqref{omega} is symplectic only when $n-1= 0$, i.e., when the basis manifold is one--dimensional. This is the case of Mechanics, when $M$ stands for the time axis. In generic case there are $n$ times more momenta $p^\mu_{\ K}$ than configurations $\varphi^K$ and no symplectic structure arises at this stage.
\end{remark}

\begin{remark}
    Observe that the vector--density component ``$\dens{\mu}$'' of \eqref{theta} behaves as a constant under the action of the vertical differential $\delta$. It is constant, indeed, along every fibre $\P_x$.
\end{remark}
\subsection{Jet--extensions of the above structure}\label{ToyStep2}

For the sections of the bundles over $M$ (like $\Q$ or $\P$), as well as    for the objects living on these sections, also the so--called \emph{space--time  differential} ``${\rm d}$'' can be defined, which is nothing but the first jet--extension of the exterior derivative in the space--time $M$.
To define this operator, we concentrate on the last factors of the forms \eqref{theta} and \eqref{omega} and treat them as differential $(n-1)$--forms on the  space--time $M$, with the first factors playing the role of coefficients. The space--time exterior derivative of these forms produces   $n$--forms. For this purpose the derivatives of the coefficients are necessary, and this is why these $n$--forms do not live on the bundles $\Q$ and ${\cal P}$, but on their first jet--extensions. Hence, we introduce the {\em infinitesimal configuration space}
\begin{equation}
{\cal Q}^I:=J^1\Q\,  ,\label{eqInfConfBund}
\end{equation}
(first jet--extension of $\Q$) and the first jet--extension
\[
\widetilde{\cal P}^I := J^1{\cal P}
\]
of the bundle ${\cal P}$  over $M$. The latter has, at the moment, no specific name (for reasons which will be obvious later). Of course, $\widetilde{\cal P}^I$ can also be treated as a bundle over $\Q^I$, where the projection is given by the first jet--extension $j^1(\Pi)$ of $\Pi$, i.e.,
\[
    j^1(\Pi) : J^1{\cal P} \rightarrow J^1\Q \ .
\]

\begin{remark}\label{remPhaseBundlefibrereOnQI}
   Any system $(x^\mu, \varphi^K)$ of coordinates in $\Q$ gives rise to a system of coordinates $(x^\mu, \varphi^K ,   \varphi^K_{,\mu } )$ on ${\cal Q}^I$, where we use the jet--adapted notation $\varphi^K_{,\mu }:= \partial_\mu \varphi^K$. Again, the index $K$ labeling the field degrees of freedom can be safely skipped in most \emph{formulae}, which simplifies considerably our notation.
   Similarly, the coordinates $(x^\mu, \varphi ,  p^\mu )$ in ${\cal P}$ give rise to the coordinates
   $(x^\mu, \varphi ,  p^\mu ,\varphi_{,\mu } , p^\mu_{\ ,\nu})$ on its first jet--extension $\widetilde{\cal P}^I $, where $p^\mu_{\ ,\nu} := \partial_\nu p^\mu$. (Remember that every $\varphi$, together with its jets, has an additional index $K$ upstairs, whereas every $p$, together with its jets, has an additional index $K$ downstairs!) These coordinates are compatible with the bundle projection, i.e.,~the projection $j^1(\Pi)$ form $\widetilde{\cal P}^I$ to ${\cal Q}^I$ consists in simply forgetting the coordinates $p^\mu$ and $ p^\mu_{\ ,\nu}$.
\end{remark}

By acting on the (vector--density--valued) one--form  \eqref{theta}, the space--time differential \virg{${\rm d}$}  produces a (scalar--density--valued) one--form on $\widetilde{\cal P}^I$, which will be denoted by
\begin{equation}\label{thetaITilde}
    \widetilde{\theta}^I:=\mbox{\rm d}\theta = \partial_\mu \left( p^\mu \delta \varphi \right) \otimes
    \left({\rm d}x^\mu \wedge  \dens{\mu}  \right) = \partial_\mu \left( p^\mu \delta \varphi \right) \otimes
     \mbox{\rm d}^n x   \ .
\end{equation}
The same procedure, applied to the (vector--density--valued) two--form \eqref{omega}, yields a (scalar--density--valued) two--form on $\widetilde{\cal P}^I$, denoted by
\begin{equation}\label{omegaITilde}
    \widetilde{\omega}^I:=\mbox{\rm d}\omega  = \partial_\mu\left( \delta p^\mu \wedge
    \delta \varphi \right) \otimes
    \left({\rm d}x^\mu \wedge  \dens{\mu}  \right) =
    \left( \delta p^\mu_{\ ,\mu} \wedge
    \delta \varphi  + \delta p^\mu \wedge
    \delta \varphi_{,\mu }\right) \otimes
     \mbox{\rm d}^n x  \ ,
\end{equation}
where we use the jet--adapted notation $\varphi_{,\mu }:= \partial_\mu \varphi$ and $p^\mu_{\ ,\nu} := \partial_\nu p^\mu$, discussed in Remark \ref{remPhaseBundlefibrereOnQI}. Observe that $   \widetilde{\omega}^I$
  is a  scalar--density--valued  two--form on   $\widetilde{\cal P}^I$. Of course, the vertical exterior derivative ``$\delta$'' and the jet--extension ``${\rm d}$'' of the space--time exterior derivative do commute, because they differentiate with respect to different variables. For this reason, we have
\begin{equation}\label{degene}
 \widetilde{\omega}^I= {\rm d} \omega = {\rm d} \delta \theta =
 \delta {\rm d} \theta
 =\delta     \widetilde{\theta}^I.
\end{equation}
\subsection{The infinitesimal phase bundle}\label{ToyStep3}
The pre--symplectic form $\widetilde{\omega}^I$ defined by \eqref{omegaITilde} is obviously degenerate: it does not depend upon all the jet coordinates $p^\mu_{\ ,\nu}$, but only on their trace $p^\mu_{\ ,\mu}$.  The degeneracy distribution of $\widetilde{\omega}^I$  corresponds precisely to the foliation of $\widetilde{\cal P}^I$ with respect to the following equivalence relation: two elements of $\widetilde{\cal P}^I$ are equivalent if and only if they have the same value of the coordinates $(x^\mu, \varphi ,  p^\mu ,\varphi_{,\mu } , p^\mu_{\ ,\mu})$ (for an obvious coordinate--independent definition of this foliation see \cite{KT79}). Hence, the form $\widetilde{\omega}^I$ defines a non--degenerate (volume--form--valued) two--form  $\omega^I$ on the quotient space
\begin{equation}\label{defInfPhasBund}
    {\cal P}^I :=\frac{ \widetilde{\cal P}^I }{{\textrm{degeneracy of }}\, \widetilde{\omega}^I} \ .%  = J^1{\cal P}  / \sim \ .
\end{equation}
A convenient choice of local coordinates for $  {\cal P}^I $ requires the \emph{current}
\begin{equation}
j:=p^\mu_{\ ,\mu}\, .
\end{equation}
Indeed,  we obtain a coordinate system on $  {\cal P}^I$:
\begin{equation}\label{coordinatePI}
{\cal P}^I\equiv \{(x^\mu, \varphi ,  p^\mu ,\varphi_{,\mu } , j)\}\, .
\end{equation}
In the coordinates \eqref{coordinatePI}, the non--degeneracy of the reduced two--form
\begin{equation}\label{eqDefOmegaI}
    \omega^I = \left(  \delta  j \wedge \delta \varphi
+ \delta p^\mu \wedge \delta   \varphi_{, \mu}
 \right) \otimes  \mbox{\rm d}^n x  \ ,
\end{equation}
  becomes evident and, consequently, also the fact  that the form $\omega^I$ is symplectic. More precisely, we obtain a family of symplectic structures, each one corresponding to a specific choice of the volume element in $M$, which must be paired with the last ingredient ``${\rm d}^n x$'' of the form \eqref{eqDefOmegaI} in order to produce  a number. Since all these symplectic forms are proportional to each other,   the notion of a Lagrangian (i.e.,~maximal, isotropic) submanifold is common to all of them.
Notice that the degeneracy distribution of \eqref{omegaITilde} can be projected on $\Q^I$, so that ${\cal P}^I$ inherits the structure of a bundle over $\Q^I$ from $ \widetilde{\cal P}^I$. Obviously, ${\cal P}^I$ is also a bundle over $\Q$ and $M$ (see diagram \eqref{eqDaDimostrareMODIF1} below). We stress again that \eqref{eqDefOmegaI} is just  a convenient coordinate definition: the form $   \omega^I$ has the same intrinsic character as its precursor $\omega$.
\begin{definition}
 The space ${\cal P}^I$ defined by \eqref{defInfPhasBund} is called the \emph{infinitesimal phase bundle}. The (scalar--density--valued) two--form  $  \omega^I$ defined by \eqref{eqDefOmegaI} is its \emph{infinitesimal symplectic structure}.
\end{definition}
\begin{equation}\label{eqDaDimostrareMODIF1}
\xymatrix{
\widetilde{\P}^I=J^1{\P}\ar@{->>}[rr]^{}\ar[dr]\ar[d]&&  \P^I\ar[dl]\\
\P\ar[dr]^\Pi&\Q^I \ar[d] &\\
&\Q\ar[d]^\pi &\\
&M &
}
\end{equation}
Observe that also the one--form $\widetilde{\theta}^I$ defined by  \eqref{thetaITilde} is compatible with the degeneracy distribution of $ \widetilde{\omega}^I$, so that it also descends to a one--form
\begin{equation}\label{thetaISenzaTilde}
     {\theta}^I =\left( j  \delta \varphi
+  p^\mu  \delta   \varphi_{, \mu}\right)\otimes {\rm d}^n x \
\end{equation}
on ${\cal P}^I $. Moreover,
\begin{equation}\label{OmegaIDifferenzialeDiThetaI}
  \omega^I=\delta  {\theta}^I.
\end{equation}

The canonical approach to field theory is based on the following, fundamental theorem:
\begin{theorem}\label{ThToyModel}
 There is a canonical identification
 \begin{equation}\label{eqPrimaIdentificazioneCanonica}
{\cal P}^I  \cong  V^*{\cal Q}^I
    \otimes_{{\cal Q}^I} \Lambda^{n}
    M
\end{equation}
of linear bundles over ${\cal Q}^I$.
\end{theorem}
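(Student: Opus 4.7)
The plan is to construct an explicit canonical bundle morphism $\Phi : \mathcal{P}^I \to V^*\mathcal{Q}^I \otimes_{\mathcal{Q}^I} \Lambda^n M$ induced by the one-form $\theta^I$ of \eqref{thetaISenzaTilde}, and then to verify that it is a fibrewise linear isomorphism by a rank count combined with injectivity.

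The key observation is that $\theta^I$ is semi-basic with respect to the projection $\mathcal{P}^I \to \mathcal{Q}^I$. In coordinates, only the pull--backs $\delta\varphi$ and $\delta\varphi_{,\mu}$ from $\mathcal{Q}^I$ appear in \eqref{thetaISenzaTilde}, while $\delta p^\mu$ and $\delta j$ are absent, so the contraction of $\theta^I$ with any vector tangent to a fibre of $\mathcal{P}^I \to \mathcal{Q}^I$ vanishes. Intrinsically, this property can be traced back to the definition $\widetilde{\theta}^I = \mathrm{d}\theta$: the seed $\theta$ lives on $\mathcal{P}$ and involves only vertical differentials of coordinates on $\mathcal{Q}$ with $p^\mu$ as coefficients, while the space--time derivative $\mathrm{d}$ merely redistributes them among the coordinates of $\mathcal{Q}^I$; no vertical differentials of $p^\mu$ or of its derivatives are ever introduced. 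Hence $\widetilde{\theta}^I$ is already semi-basic for the projection $\widetilde{\mathcal{P}}^I \to \mathcal{Q}^I$, and this property is preserved under the symplectic reduction \eqref{defInfPhasBund}.

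Being semi-basic, $\theta^I$ defines, at every $\rho \in \mathcal{P}^I$ projecting to $(q, \dot q) \in \mathcal{Q}^I$, an element $\Phi(\rho) \in V^*_{(q, \dot q)}\mathcal{Q}^I \otimes \Lambda^n_{\pi(q)} M$, giving the sought bundle map over $\mathcal{Q}^I$. Its linearity along the fibres of $\mathcal{P}^I \to \mathcal{Q}^I$ follows from the fact that the coefficients $(j, p^\mu)$ of $\theta^I$ depend linearly on the fibre coordinates of $\mathcal{P}^I$, a property inherited from the linear quotient structure $\mathcal{P}^I = \widetilde{\mathcal{P}}^I/{\sim}$ coming from the vector bundle $J^1\mathcal{P} \to \mathcal{Q}^I$.

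It remains to verify that $\Phi$ is a fibrewise isomorphism. A rank count over $\mathcal{Q}^I$ gives $N(n+1)$ on both sides: the fibre of $\mathcal{P}^I \to \mathcal{Q}^I$ is coordinatised by $(p^\mu, j)$, while $V^*\mathcal{Q}^I \otimes \Lambda^n M$ has the dual basis to $\{\partial_\varphi, \partial_{\varphi_{,\mu}}\}$ tensored with the line $\Lambda^n M$. Hence only injectivity is needed: if $\Phi(\rho) = 0$, then $\theta^I|_\rho = 0$, forcing the coefficients $j$ and $p^\mu$ to vanish in the independent basis $\{\delta\varphi, \delta\varphi_{,\mu}\} \otimes \mathrm{d}^n x$, so $\rho$ is the zero element of the fibre. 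The main technical obstacle is making the semi-basic property of $\theta^I$ and the linear structure of $\mathcal{P}^I$ over $\mathcal{Q}^I$ rigorous in a coordinate-free fashion, which reduces to verifying that the degeneracy distribution of $\widetilde{\omega}^I$ — along which the reduction \eqref{defInfPhasBund} is taken — is compatible with the projection $\widetilde{\mathcal{P}}^I \to \mathcal{Q}^I$.
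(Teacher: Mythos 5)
Your argument is correct, but it is not the route the paper takes. The paper (Theorem \ref{thEgr} in Section \ref{SecByGiovanni}) works \emph{upstairs} on $J^1\P$: it first establishes the exchange identification $J^1V\Q\cong VJ^1\Q$ (Proposition \ref{ProposizioneFondamentale}, proved in the Appendix), uses it to pair a jet $\zeta=j^1_{x_0}p\in J^1\P$ with a vertical vector $w=j^1_{x_0}v\in V\Q^I$, takes the exterior derivative of the resulting $(n-1)$--form to land in $\Lambda^n_{x_0}M$, and then verifies that the fibres of the resulting surjection $\Psi$ are exactly the degeneracy leaves of $\widetilde\omega^I$ --- so the identification of the quotient $\P^I$ with $V^*\Q^I\otimes_{\Q^I}\Lambda^nM$ falls out as a by-product, without presupposing that $\widetilde\theta^I$ descends. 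You instead work \emph{downstairs}: you take as given the two facts asserted in Section \ref{ToyStep3} (that the degeneracy distribution is vertical over $\Q^I$, so that $\P^I\to\Q^I$ is defined, and that $\widetilde\theta^I$ descends to $\theta^I$), observe that $\theta^I$ is semi-basic over $\Q^I$, read off the bundle map, and close with a rank count plus injectivity. The two maps coincide --- the paper itself remarks that the coordinate formula \eqref{eqPsiRistretta} for $\Psi$ is formally identical to \eqref{thetaISenzaTilde} --- so your proof is a legitimate shortcut. What the paper's longer construction buys is, first, an intrinsic derivation of precisely the descent facts you assume (identifying the fibres of $\Psi$ with the degeneracy leaves \emph{re-derives} the quotient structure rather than invoking it), and second, a form of the argument that transfers directly to the constrained case (Corollaries \ref{ConsEQthEgr} and \ref{corEgr}) and to the higher-order Theorem \ref{ThToyModelHI}. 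Your closing sentence correctly isolates the one step still needing an intrinsic proof --- compatibility of the degeneracy distribution with the projection to $\Q^I$ --- and that is exactly the content the paper supplies through $\Psi$ and the identification $J^1V\Q\cong VJ^1\Q$.
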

\begin{proof}
Carried out in Section \ref{SecByGiovanni}.
\end{proof}
By the same reasons behind the equivalence of \eqref{eqDefPhaseBundle} and  \eqref{eqDefPhaseBundlePunto}, the unique identification \eqref{eqPrimaIdentificazioneCanonica} of  linear bundles over ${\cal Q}^I$ can be seen as a family of identifications of (usually nonlinear) bundles over ${\cal Q}^I_x$, namely
\begin{equation}\label{eqPrimaIdentificazioneCanonica-at-x}
    {\cal P}^I_x \cong  T^*{\cal Q}^I_x
    \otimes_\R \bigwedge^{n}
     T^*_x M \ ,\quad\forall x\in M\ .
\end{equation}
\subsection{Euler--Lagrange equations as a Lagrangian submanifold}\label{subELdynamics}
In the present framework, a Lagrangian density
\begin{equation}\label{lagr}
    {\cal L} = L\, \mbox{\rm d}^n x \
\end{equation}
is treated as a (scalar--density--valued) zero--form on the infinitesimal configuration bundle ${\cal Q}^I$. Its vertical differential becomes a (scalar--density--valued) covector on ${\cal Q}^I$:
\begin{equation}\label{eqVertDiffLagr}
  \delta {\cal L}  \in  V^*{\cal Q}^I  \otimes_{{\cal Q}^I} \Lambda^{n}
    M \,   .
\end{equation}
For every space--time point $x\in M$, the collection of all these covectors (i.e.,~the graph of $\delta {\cal L}_x$)  is a sumbanifold ${\cal D}_x$ of the infinitesimal phase bundle \eqref{eqPrimaIdentificazioneCanonica-at-x}.

In accordance with our point of view on PDEs, the submanifold ${\cal D}$ can be treated as ``the equation'' generated by ${\cal L}$, i.e.,~the space of ``admissible jets'' of sections of the phase bundle ${\cal P}$. More precisely, we say that a section $s$ of the phase bundle satisfies our PDE if and only if the equivalence class $[j^1(s)]$ of its first jet $j^1(s)$ belongs to ${\cal D}$. With an obvious abuse of language, we shall write $j^1(s) \in {\cal D}$.

\begin{corollary}\label{corELeqsAreLagrSubs}
For any $x\in M$, the submanifold
\begin{equation}\label{defDynamics}
{\cal D}_x:= {\rm graph} (\delta {\cal L}_x) \subset \P^I_x
\end{equation}
is Lagrangian with respect to $ \omega^I_x$ and, as a PDE, it is equivalent to the Euler--Lagrange equations associated with ${\cal L} $, together with a definition of the corresponding canonical momenta.
\end{corollary}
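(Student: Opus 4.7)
The plan is to reduce the claim to the familiar cotangent-bundle argument that the graph of a (vertically) closed one-form is automatically Lagrangian. Via Theorem \ref{ThToyModel}, the infinitesimal phase bundle over $\Q^I_x$ is a twisted cotangent bundle, and the vertical differential $\delta\L_x$ is a section of $\P^I_x\to\Q^I_x$. I expect the one-form $\theta^I$ of \eqref{thetaISenzaTilde} to play the role of the tautological (Liouville) form under this identification, so that $\sigma^*\theta^I_x=\sigma$ for every section $\sigma$ of $\P^I_x\to\Q^I_x$. This is manifest in coordinates: a section assigns functions $p^\mu(\varphi,\varphi_{,\cdot})$ and $j(\varphi,\varphi_{,\cdot})$ on $\Q^I_x$, and pulling back \eqref{thetaISenzaTilde} along it produces the (volume-form-valued) one-form $(j\,\delta\varphi+p^\mu\,\delta\varphi_{,\mu})\otimes\mathrm{d}^n x$ on $\Q^I_x$, which is precisely $\sigma$ itself regarded as an element of $V^*\Q^I_x\otimes\Lambda^n_x M$.

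Once this is granted, isotropy of $\mathcal{D}_x=\mathrm{graph}(\delta\L_x)$ follows immediately from \eqref{OmegaIDifferenzialeDiThetaI}:
\[
(\delta\L_x)^*\omega^I_x=(\delta\L_x)^*\delta\theta^I_x=\delta\bigl((\delta\L_x)^*\theta^I_x\bigr)=\delta\delta\L_x=0.
\]
Maximality is then a dimension count: $\mathcal{D}_x$ is the image of a section of $\P^I_x\to\Q^I_x$, so $\dim\mathcal{D}_x=\dim\Q^I_x=\tfrac{1}{2}\dim\P^I_x$, which is the Lagrangian dimension for $\omega^I_x$.

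For the PDE interpretation I would expand $\L=L(\varphi,\varphi_{,\mu})\,\mathrm{d}^n x$, so that $\delta\L=\bigl(\tfrac{\partial L}{\partial\varphi}\,\delta\varphi+\tfrac{\partial L}{\partial\varphi_{,\mu}}\,\delta\varphi_{,\mu}\bigr)\otimes\mathrm{d}^n x$, and compare with the generic covector $(j\,\delta\varphi+p^\mu\,\delta\varphi_{,\mu})\otimes\mathrm{d}^n x$ at a point of $\P^I_x$. The graph $\mathcal{D}_x$ is then cut out by
\[
p^\mu=\tfrac{\partial L}{\partial\varphi_{,\mu}},\qquad j=\tfrac{\partial L}{\partial\varphi}.
\]
The first equality is exactly the standard definition of the canonical momenta; recalling $j=p^\mu_{,\mu}$ from \eqref{coordinatePI}, the second turns into the Euler--Lagrange equation $\partial_\mu\bigl(\tfrac{\partial L}{\partial\varphi_{,\mu}}\bigr)=\tfrac{\partial L}{\partial\varphi}$ for any section $s$ of $\P$ satisfying $j^1(s)\in\mathcal{D}$.

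The only delicate point I foresee is the first step: checking that $\theta^I$ really coincides with the tautological one-form under the canonical identification of Theorem \ref{ThToyModel}. This, however, should follow naturally from the chain of intrinsic constructions $\theta=\Pi^*p\rightsquigarrow\widetilde\theta^I={\rm d}\theta\rightsquigarrow\theta^I$, so that the coordinate verification above promotes to the global statement without extra work.
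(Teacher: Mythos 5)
Your proof is correct and follows essentially the same route as the paper: the paper's argument is precisely that the generating formula $\delta\mathcal{L}=\left.\theta^I\right|_{\mathcal D}$ holds by definition of $\mathcal D$ (the tautological property you spell out), whence $\left.\omega^I\right|_{\mathcal D}=\delta\delta\mathcal{L}=0$, followed by the identical coordinate computation yielding $p^\mu=\partial L/\partial\varphi_{,\mu}$ and $j=\partial_\mu p^\mu=\partial L/\partial\varphi$. Your only additions --- making the tautological nature of $\theta^I$ under the identification of Theorem \ref{ThToyModel} explicit, and the dimension count for maximality --- are details the paper leaves implicit, not a different argument.
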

\begin{proof}
By the very definition \eqref{defDynamics} of ${\cal D}$, the equality
\begin{equation}\label{eqGenFormula}
    \delta {\cal L} = \left. \theta^I \right|_{\cal D} \ ,
\end{equation}
which is referred to as the \emph{generating formula} for ${\cal D}$, is satisfied  on the submanifold ${\cal D}$, so that \eqref{OmegaIDifferenzialeDiThetaI} implies  $\left. \omega^I\right|_{\cal D}= \delta \left. \theta^I\right| = \delta \delta {\cal L} \equiv 0$, i.e., ${\cal D}$ is Lagrangian.\par
When writing  equation  \eqref{eqGenFormula} in local coordinates, we can skip the volume form ${\rm d}^n x$, which is present in both \eqref{lagr} and \eqref{thetaISenzaTilde}. This way we obtain
\begin{equation}\label{eqQuasiEulLag}
    \delta L (\varphi, \varphi_{,\mu}) = j  \delta \varphi
+  p^\mu  \delta   \varphi_{, \mu} \ .
\end{equation}
This is equivalent to the first--order PDE
\begin{eqnarray}
% \nonumber to remove numbering (before each equation)
   p^\mu &=& \frac{\partial L}{\partial \varphi_{, \mu}} \ ,\label{defMom} \\
  j = \partial_\mu p^\mu &=& \frac{\partial L}{\partial \varphi}\ , \label{eqEL} \,
\end{eqnarray}
which, in turn, is equivalent to the (second--order) Euler--Lagrange equations.  Indeed, treating \eqref{defMom} as the  definition of the ``auxiliary variables'', namely the canonical momenta $p^\mu$, and  plugging them into \eqref{eqEL}, we obtain the Euler--Lagrange system.\par
\
\end{proof}
\begin{remark}\label{control-mode}
   The following terminology, taken from the control theory, simplifies considerably the description of various physical phenomena in terms of symplectic geometry. Namely, let $P$ be the symplectic space  describing a physical system. Whenever $P$ is represented as the   co--tangent bundle of a certain manifold $Q$, i.e.,~$P=T^*Q$, we   call this representation a   ``control mode''.  In this perspective, coordinates $q^i$ on $Q$ become the ``control parameters'' and the corresponding momenta $p_i$ the ``response parameters''. The condition imposed on the admissible states of the system, i.e.,~$(q^i, p_i) \in D\subset P$, where $D$ is  a  Lagrangian submanifold $D\subset P$, captures the physical laws governing the system. It 
is often interpreted as a condition imposed on the momenta when the positions are given. We call it ``a control--response relation''. Hence, we may say that the Euler--Lagrande equations \eqref{eqGenFormula} provide a control--response relation in the ``Lagrangian'' control mode \eqref{eqPrimaIdentificazioneCanonica-at-x}.
\end{remark}

\section{Expansion and reduction}\label{expansion}

It may happen that, when constructing a mathematical model of a given physical phenomenon, we take into account an additional field variable, say $\psi$, which later may prove itself to be irrelevant. At the beginning, we just add the new degree of freedom $\varphi^{N+1} := \psi$ to the previous $N$ fields $\phi^K$'s. Consequently, the new momentum $p_{N+1}^\mu =: r^\mu$, canonically conjugate to $\varphi^{N+1}$, and the new current $i:= r^\mu_\mu$ arise.  Doing so, the phase bundle gets new dimensions, the canonical symplectic form \eqref{eqDefOmegaI} acquires new terms and the field equations \eqref{eqQuasiEulLag} (or, equivalently, \eqref{defMom}--\eqref{eqEL}) are supplemented by new ones:
\begin{eqnarray}
% \nonumber to remove numbering (before each equation)
   r^{\mu} &=& \frac{\partial L}{\partial \phi_{, \mu}} \ ,\label{defMom-N} \\
  i = \partial_\mu r^\mu &=& \frac{\partial L}{\partial \phi}\ . \label{eqEL-N} \,
\end{eqnarray}
Suppose now that the Lagrangian of the theory does not depend upon the values of the new variables. This means that they are irrelevant for the phenomenon we are modelling. In such a case the right--hand sides of \eqref{defMom-N}--\eqref{eqEL-N} vanish identically. This fact can be treated as the manifestation of the additional constraints  $r^\mu \equiv 0$ and $i \equiv 0$. But, when restricted to the subspace of points satisfying the constraints, the symplectic form is no longer non--degenerate.  So, removing the irrelevant variables requires a symplectic reduction with respect to the degeneracy of the symplectic form. This means that we identify the states which differ by the values of the irrelevant variables only. The quotient space is isomorphic with the previous phase space ${\cal P}^I$.\par
Such a  scheme is quite general. Constraints imposed on the phase space can introduce a degeneracy of the symplectic form $\omega^I$. The leaves of this degeneracy describe  the ``irrelevant degrees of freedom'', which are often called ``gauge degrees of freedom''. We can remove the degeneracy if we pass to the quotient space with respect to this gauge. The resulting quotient space is parametrized by the ``gauge invariants'', i.e.,~by the quantities which do not depend upon the gauges. The resulting symplectic form is non--degenerate. These techniques have already been used in \eqref{defInfPhasBund} to remove the degeneracy of the form \eqref{degene}.\par
If the constraints apply to the momenta $p^\mu$ and $j$ (``momentum constraints''), the corresponding gauge applies to the configurations $\varphi$ and $\varphi_{,\mu}$, like in the trivial example above. Of course, the gauge can be highly non--trivial if the constraints are non--linear.\par
But the opposite situation, namely when the constraints imposed on the configurations imply gauge in momenta, often happens. We call such constraints the ``Lagrangian constraints''. Suppose, therefore, that the admissible configurations of the theory are subject to the constraint equations
\begin{equation}\label{constr}
    C_a(\varphi, \varphi_{,\mu})= 0 \ , \ \ a = 1,\dots , k \ .
\end{equation}
We assume that these constraints are \emph{regular}, i.e.,~that the $k$ equations  \eqref{constr} define a submanifold of codimension $k$, and denote by ${\cal C} \subset \Q^I$ this ``constraint submanifold'', i.e.,~the collection of points satisfying these equations. If the Lagrangian density ${\cal L}$ is defined on the constraint sumbanifold ${\cal C}$ only, then equation \eqref{eqQuasiEulLag} is, {\em a priori}, meaningless, because the differential $\delta{\cal L}$ is not defined. More precisely, it is not {\em uniquely} defined, because we can use any extension $\widetilde{\cal L}$ to a neighborhood of ${\cal C}$, of the function ${\cal L}$, and take its differential $\delta \widetilde{\cal L}$ as a representative of $\delta {\cal L}$. This representation is, of course, not unique.\par
There is a strategy to simplify the generating formula \eqref{eqQuasiEulLag} as much as possible, in such a way that all the \emph{formulae} look the same in both  in the constrained case and in the constraint--free case. It consists in defining the differential $\delta {\cal L}$ as the {\em collection} of all possible covectors $\delta \widetilde{\cal L}$ obtained in this way. In other words,  $\delta {\cal L}$ {\em is not} a single (scalar--density--valued) covector on ${\cal Q}^I_x$, but rather the {\em collection} of all the covectors on $\Q^I$ which agree with the differential of ${\cal L}$ on  the constraints submanifold ${\cal C}$. Choosing a particular extension $\widetilde{\cal L}$ of ${\cal L}$, this collection can be described as
\begin{equation}\label{delta-tilde-L}
    \delta {\cal L} := \{ \delta \widetilde{\cal L} + \lambda^a \delta C_a \} \ ,
\end{equation}
where the ``Lagrange multipliers'' $\lambda^a$ assume all possible values. Formula \eqref{delta-tilde-L} shows that $\delta{\cal L}_x$ is not just a single covector on $\Q^I$, but rather  a $k$--parametric family of them. This means that the graph of $\delta{\cal L}$ has again the dimensionality of $\Q^I$: it is a sub--bundle of ${\cal P}$, whose basis ${\cal C}$ has co--dimension $k$ and whose fibres have dimension $k$. It is easy to see that, like in Corollary \ref{corELeqsAreLagrSubs}, the submanifold ${\cal D}_x:={\rm graph}(\delta{\cal L}_x)$ is again a Lagrangian submanifold of ${\cal P}_x$ and, as a PDE, it is equivalent to the Euler--Lagrange equations associated with ${\cal L} $ and the constraints \eqref{constr}, together with a ``definition of the corresponding canonical momenta''. Actually, the momenta are not uniquely defined in this case, but only up to a ``gauge'' described by \eqref{delta-tilde-L}.\par

Such a definition of $\delta {\cal L}$ highly simplifies  the notation. Indeed, field equations for a theory with Lagrangian constraints can again be written as \eqref{eqGenFormula} or, in coordinates, as \eqref{eqQuasiEulLag}. The response parameters on the right--hand side of \eqref{defMom} and \eqref{eqEL} are not given uniquely, but constitute  a family given by
\begin{eqnarray}
% \nonumber to remove numbering (before each equation)
   p^\mu &=& \frac{\partial {\widetilde L}}{\partial \varphi_{, \mu}} +
    \lambda^a \frac{\partial C_a}{\partial \varphi_{, \mu}}\ ,\label{defMom-constr} \\
  j = \partial_\mu p^\mu &=& \frac{\partial {\widetilde L}}{\partial \varphi}+
    \lambda^a \frac{\partial C_a}{\partial \varphi}\ , \label{eqEL-constr} \,
\end{eqnarray}
where ${\widetilde L}$ is any restriction of $L$ to a neighbourhood of  the constraint submanifold ${\cal C}$. Our definition of $\delta {\cal L}$ allows us  to replace the last two \emph{formulae} by \eqref{eqQuasiEulLag}.

Being perfectly legal, the above formulation of a theory with Lagrangian constraints can be further simplified by removing the ``irrelevant degrees of freedom''. This redundancy is described not only by the entire class of jets possessing the same value of the trace $j = \partial_\mu p^\mu$, like in the unconstrained case, but also by the Lagrange multipliers $\lambda_a$. This alternative formulation of the theory consists in restricting the infinitesimal phase space bundle ${\cal P}^I$ to the submanifold ${\cal P}_{\cal C}^I$ composed of those fibres which  satisfy the constraints. The symplectic form $\omega^I$, restricted to $\C$, gives the form $\omega^I_{\cal C}$, which is degenerate. The  degeneracy foliation of $\omega^I_{\cal C}$ contains not only the complete degeneracy leaves of \eqref{defInfPhasBund}, but also the ``gauge leaves'' given by \eqref{delta-tilde-L}. The symplectic reduction consists in passing to the quotient space, where two states of the field are declared to be equivalent if they belong to the same leaf of the foliation. This means that two covectors on $\Q^I$ are equivalent if and only if they define the same (scalar--density--valued) covector on ${\cal C} \subset \Q^I$. As a result we obtain the {\em reduced} infinitesimal phase space
\begin{equation}\label{inf-red}
    {\cal P}^I_{\mathrm{reduced}} := \frac {{\cal P}_{\cal C}^I}{{\textrm{degeneracy of }}\, \omega^I_{\cal C}} \ .
\end{equation}
The above symplectic reduction plays role of the ultimate Ockham's Razor in our construction. Even if we begin our construction with too many parameters, the razor finally reduces it to the optimal shape.
\begin{theorem}\label{ThToyModel-red}
 There is a canonical identification
 \begin{equation}\label{eqPrimaIdentificazioneCanonica-red}
{\cal P}^I_{\mathrm{reduced}}  \cong \left(V^*{\cal C}  \right)
    \otimes_{{\cal C}} \Lambda^{n}
    M
\end{equation}
of linear bundles over ${\cal C}$.
\end{theorem}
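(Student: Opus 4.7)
The plan is to reduce the statement, via Theorem \ref{ThToyModel}, to the standard fact that the vertical conormal bundle of a submanifold sits canonically inside the ambient vertical cotangent bundle, with quotient equal to the vertical cotangent bundle of the submanifold. Applying Theorem \ref{ThToyModel} and restricting to the fibres over $\mathcal{C}\subset\mathcal{Q}^I$ gives
\[
\mathcal{P}^I_{\mathcal{C}} \cong V^*\mathcal{Q}^I\big|_{\mathcal{C}} \otimes_{\mathcal{C}} \Lambda^n M.
\]
The canonical short exact sequence of vector bundles over $\mathcal{C}$,
\[
0 \longrightarrow (V\mathcal{C})^\circ \longrightarrow V^*\mathcal{Q}^I\big|_{\mathcal{C}} \longrightarrow V^*\mathcal{C} \longrightarrow 0,
\]
in which $(V\mathcal{C})^\circ$ is the fibrewise annihilator of $V\mathcal{C}$ (locally generated by $\delta C_1,\ldots,\delta C_k$), tensored with $\Lambda^n M$ produces a natural surjection $\mathcal{P}^I_{\mathcal{C}} \twoheadrightarrow V^*\mathcal{C}\otimes_{\mathcal{C}}\Lambda^n M$. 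It therefore suffices to identify the kernel of this surjection with the degeneracy distribution of $\omega^I_{\mathcal{C}}$.

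I would verify this in adapted coordinates. Since the $C_a$ are regular, by the implicit function theorem we may choose local coordinates $(y^\alpha, z^a)$ on $\mathcal{Q}^I$ with $z^a = C_a$, so that $\mathcal{C}=\{z^a=0\}$ and $y^\alpha$ are induced coordinates on $\mathcal{C}$. In the induced coordinates $(y^\alpha, z^a, \pi_\alpha, \rho_a)$ on $\mathcal{P}^I \cong V^*\mathcal{Q}^I \otimes \Lambda^n M$ given by Theorem \ref{ThToyModel}, where the generic point is $(\pi_\alpha\delta y^\alpha+\rho_a\delta z^a)\otimes d^n x$, the canonical symplectic form reads
\[
\omega^I = \bigl(\delta\pi_\alpha \wedge \delta y^\alpha + \delta\rho_a \wedge \delta z^a\bigr) \otimes d^n x.
\]
Pulling back to $\mathcal{P}^I_{\mathcal{C}}=\{z^a=0\}$ annihilates $\delta z^a$ and yields $\omega^I_{\mathcal{C}} = \delta\pi_\alpha \wedge \delta y^\alpha \otimes d^n x$, whose kernel is visibly spanned by the fibre directions $\partial/\partial\rho_a$. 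Under the identifications above these directions amount to adding an arbitrary $\lambda^a\delta C_a\otimes d^n x$ to a given covector, which is precisely the Lagrange-multiplier gauge freedom \eqref{delta-tilde-L} and coincides fibrewise with $(V\mathcal{C})^\circ\otimes\Lambda^n M$. Passing to the quotient therefore produces exactly $V^*\mathcal{C} \otimes_{\mathcal{C}} \Lambda^n M$.

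The main obstacle is ensuring that this identification is intrinsic, for the choice of $(y^\alpha, z^a)$ is highly non-unique: any other local generating set $\{\widetilde C_a\}$ for the ideal cutting out $\mathcal{C}$ would produce a different splitting. Nevertheless, both the restriction map $V^*\mathcal{Q}^I|_{\mathcal{C}}\twoheadrightarrow V^*\mathcal{C}$ and the isomorphism of Theorem \ref{ThToyModel} are canonical, so the annihilator sub-bundle $(V\mathcal{C})^\circ$ and the resulting quotient are intrinsic objects attached to $\mathcal{C}\subset\mathcal{Q}^I$. This promotes the local coordinate calculation to the global isomorphism \eqref{eqPrimaIdentificazioneCanonica-red} asserted in the theorem.
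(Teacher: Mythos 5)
Your proposal is correct and follows essentially the same route as the paper: the paper also deduces the result from the short exact sequence $(V\C)^\circ \hookrightarrow V^*\Q^I|_{\C} \twoheadrightarrow V^*\C$ (its diagram \eqref{eqShortExacSeq}) composed with the canonical surjection of Theorem \ref{thEgr}, identifying the extra kernel with the Lagrange--multiplier gauge directions. Your explicit adapted-coordinate check of the degeneracy of $\omega^I_{\C}$ and the remark on independence of the choice of constraint functions merely spell out what the paper labels ``straightforward'' in Corollary \ref{corEgr}.
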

\begin{proof}
Follows from Corollary \ref{corEgr} (see Section \ref{sec52} later on).
\end{proof}

\section{Higher order variational problems}\label{SecHiOrd}
Our construction of the symplectic framework for the calculus of variations, presented in Section \ref{SecToyModel}, was based on the following four steps.
\begin{enumerate}
\item We first define the phase bundle $\P$ of \virg{vector--density--valued} covectors on the fibres of the configuration bundle $\Q$. There are canonical forms $\omega$ and $\theta$ living on it (see Section \ref{ToyStep1}).
\item We take the first jet--extension $\widetilde{\P}^I$ of $\P$, together with the jet--extension (\virg{space--time derivatives}) $\widetilde{\omega}^I$ and $\widetilde{\theta}^I$ of the canonical forms (see Section \ref{ToyStep2}).
\item We observe that the canonical two--form $\widetilde{\omega}^I$ is degenerate and we define $ {\P}^I$ as the symplectic reduction of $\widetilde{\P}^I$ with respect to this degeneracy (see Section \ref{ToyStep3}).
\item We notice that the collection of all the jets satisfying the Euler--Lagrange equations for a given Lagrangian ${\cal L}$ corresponds to a Lagrangian submanifold ${\cal D}$ of the infinitesimal phase bundle $ {\P}^I$. This correspondence is accomplished  {\em via} the generating equation $\delta{\cal L}=\left. \theta\right|_{\cal D}$ (see Corollary \ref{corELeqsAreLagrSubs}).
\end{enumerate}

Its extension to  higher--order Lagrangians can be constructed in many equivalent ways. The \virg{royal road} which we use here consists in treating a $k\Th$ order variational problem as a first--order problem with Lagrangian constraints:
\begin{equation}\label{jets-in-jets}
   J^k \Phi \subset J^1(J^{k-1} \Phi )\ .
\end{equation}
This means that we first treat the space $J^{k-1} \Phi$ of $k-1\St$  jets of a given bundle $\Phi$ as the configuration bundle. The construction goes along the lines sketched above, but the final symplectic reduction with respect to the constraints \eqref{jets-in-jets} is necessary.

\subsection{The configuration space}\label{Step0HiOrd}
The space--time $M$ and its coordinates are the same as before. On the other hand, the role of $\Q$ is played now by the $k-1\St$ jet--extension
\begin{equation}\label{eqDefQ}
    \Q := J^{k-1} \Phi \
\end{equation}
of a   fibre bundle $\pi:\Phi\longrightarrow M$. The sections of $\Phi$ are the fields of the theory, and $\Phi_x$ is the space of all possible values of the fields at the point $x \in M$. As before,  we can skip the index $K$ labelling the fields and write $(x^\mu , \varphi)$ as coordinates on $\Phi$, instead of $(x^\mu , \varphi^K)$. Such an abuse of notation will simplify our job. The procedure  to recover the correct version of the \emph{formulae} which follow is simple: every $\varphi$ acquires an extra index $K$ upstairs, whereas every dual object (momenta and currents) acquires an extra index $K$ downstairs. \par
Accordingly, coordinates on $\Q$ are denoted by
\begin{equation}\label{eqDefQ_coord}
  (x^\mu , \varphi, \varphi_{\mu} , \varphi_{{\mu_1}{\mu_2}}, \dots
    , \varphi_{{\mu_1} \dots {\mu_{k-1}}}) \ ,
\end{equation}
where every coordinate $\varphi_{{\mu_1} \dots {\mu_i}}$ is  symmetric {\em a priori}. Equivalently, we can use the multi--index notation
\begin{equation}\label{eqCoordQI}
 (x^\mu ,  \varphi_{\mmu}) \ ,
\end{equation}
where $\mmu = (\mmu_1 , \mmu_2 , \dots , \mmu_n)$ is a multi--index. Its component $\mmu_i \in \mathbb{N}$, $1 \le i \le n$, tells us ``how many derivatives of $\varphi$ has been taken in the direction of the variable $x^i$ on M''. If $|\mmu|$ denotes the length of the multi--index, we have $0 \leq |\mmu| \leq (k-1)$.  \par
Now we can use the space $\Q$ as the starting point for the construction described in  Section \ref{SecToyModel}. Beware that, doing so, all the fibre  coordinates \eqref{eqCoordQI} will play the role of \emph{independent} field variables, so that, at the appropriate moment (Section \ref{StepEXTRAHiOrd} below), the additional relation (called {\em holonomy constraint}) must be imposed, to force the $\varphi_{\mmu}$'s to be the \emph{true} derivatives of $\varphi$.

\subsection{Phase bundle and   canonical forms on it}\label{Step1HiOrd}

The phase bundle ${\cal P}$ of the higher order theory is constructed in analogy with \eqref{eqDefPhaseBundlePunto}:
\begin{equation}\label{phase-bundle-higher}
  {\cal P}_x :=  T^* \Q_x \otimes_\R \bigwedge^{n-1}
 T^*_x M  ,\quad\forall x\in M \ .
\end{equation}
A typical element of $  {\cal P}$ reads
\begin{equation}\label{typic-p}
      p = \left( p^\lambda \delta \varphi +
  p^{\mu \lambda } \delta \varphi_\mu + \dots +
  p^{{\mu_1} \dots {\mu_{k-1}} \lambda } \delta \varphi_{{\mu_1} \dots {\mu_{k-1}}}
    \right) \otimes \dens{\lambda} \ ,
\end{equation}
where, for every $0\leq l \leq (k-1)$, the coefficients $p^{{\mu_1} \dots {\mu_{l}} \lambda }$ are {\em a priori} symmetric with respect to the indices $({\mu_1} \dots {\mu_{l}})$, viz.
\begin{equation}\label{sym-1}
    p^{{\mu_1} \dots {\mu_{l}} \lambda } =
    p^{({\mu_1} \dots {\mu_{l}}) \lambda } \ ,
\end{equation}
but no symmetry of the momenta $p^{{\mu_1} \dots {\mu_{l}} \lambda }$ with respect to the last index $\lambda$ is assumed.\par

We shall also need the sub--bundle ${\cal S} \subset {\cal P}$ consisting of totally symmetric momenta,  i.e.,
\begin{equation}\label{sym-2}
    p^{{\mu_1} \dots {\mu_{l}} \lambda } =
    p^{({\mu_1} \dots {\mu_{l}} \lambda )} \, ,
\end{equation}
whose intrinsic definition is put off in the Appendix \ref{app-momentum-gauge}. At this point we only mention that the momenta will always be applied to holonomic jets and, consequently, the non--symmetric part of the momentum will play role of a gauge parameter. For reason which will be clear in the sequel, it is useful to develop in parallel both versions of the theory: the non--symmetric one, based on the bundle ${\cal P}$ and the symmetric one, based on its sub--bundle ${\cal S}$.\par

In the multi--index notation, the element \eqref{typic-p} can be written as
\begin{equation}\label{typic-p-multi}
      p =  \sum_{|\mmu|\leq (k-1)} p^{\mmu \lambda} \delta \varphi_{\mmu}
    \otimes
\dens{\lambda}  \ ,
\end{equation}
where the summation runs over all multi--indices and over all $\lambda$'s.
Observe that the correspondence between the multi--index $\mmu = (\mmu_1 , \mmu_2 , \dots , \mmu_n)$ and the corresponding index  $(\mu_1 , \dots , \mu_l)$ implies:
\[
    p^{\mmu  \lambda} = l! \cdot p^{{\mu_1} \dots {\mu_{l}} \lambda } \ ,
\]
because, due to the symmetry, every term of the sum \eqref{typic-p-multi} represents $l!$ identical terms of the sum \eqref{typic-p}.
So far, besides an inevitable proliferation of indices, no critical differences with respect to the first--order case have yet been met. Also the definition of the canonical forms
\begin{eqnarray*}
% \nonumber to remove numbering (before each equation)
  \theta &=& \left( p^\lambda \delta \varphi +
  p^{\mu \lambda } \delta \varphi_\mu + \dots +
  p^{{\mu_1} \dots {\mu_{k-1}} \lambda } \delta \varphi_{{\mu_1} \dots {\mu_{k-1}}}
    \right) \otimes
\dens{\lambda}, \\
  \omega = \delta \theta &=&
  \left( \delta p^\lambda \wedge \delta \varphi
  +
  \delta p^{\mu \lambda } \wedge \delta \varphi_\mu + \dots +
  \delta p^{{\mu_1} \dots {\mu_{k-1}} \lambda } \wedge
  \delta \varphi_{{\mu_1} \dots {\mu_{k-1}}}
  \right)\otimes
\dens{\lambda} \ ,
\end{eqnarray*}
is formally analogous to \eqref{theta} and \eqref{omega}, reading, in multi--index notation, respectively,
\begin{eqnarray*}
% \nonumber to remove numbering (before each equation)
  \theta &=&  \sum_{|\mmu|\leq (k-1)} p^{\mmu \lambda} \delta \varphi_{\mmu}
      \otimes
  \dens{\lambda}  \ , \\
  \omega = \delta \theta &=&  \sum_{|\mmu|\leq (k-1)}
  \delta p^{\mmu \lambda} \wedge  \delta \varphi_{\mmu}
      \otimes
 \dens{\lambda}  \ .
\end{eqnarray*}

\subsection{Jet--extension of the phase bundle}\label{Step2HiOrd}
Much as in Section \ref{ToyStep2}, we produce now an ``oversized'' infinitesimal phase bundle, which later will be shrunk to appropriate proportions. The key difference with the first--order case is that the shrinking will be performed in two, conceptually separated, steps. This is the reason why the first--jet extension
\[
\widetilde{\widetilde{\cal P}}^I := J^1{\cal P}
\]
of the bundle ${\cal P}\longrightarrow M$ is decorated with a double tilde. In the symmetric version of the theory we put:
\[
\widetilde{\widetilde{\cal S}}^I := J^1{\cal S} \ .
\]
According, we shall have the ``double tilde'' versions of the forms \eqref{thetaITilde} and \eqref{omegaITilde}:
\begin{eqnarray*}
% \nonumber to remove numbering (before each equation)
  \widetilde{\widetilde{\theta}}^I &:=& \mbox{\rm d}\theta = \partial_\lambda
  \left( p^\lambda \delta \varphi +
  p^{\mu \lambda } \delta \varphi_\mu + \dots +
  p^{{\mu_1} \dots {\mu_{k-1}} \lambda } \delta \varphi_{{\mu_1} \dots {\mu_{k-1}}}
    \right) \otimes
     \mbox{\rm d}^n x  \\
   &=& \left( p^\lambda_{\ ,\lambda} \delta \varphi
  + p^\lambda \delta \varphi_{ , \lambda} +
  p^{\mu \lambda }_{\ \ ,\lambda} \delta \varphi_\mu +
  p^{\mu \lambda } \delta \varphi_{\mu ,\lambda}
  \right. \\
   &+&
  \left.
   p^{{\mu_1} \dots {\mu_{k-1}} \lambda }_{\ \ \ \ \ \ \ \ \ \ ,\lambda} \delta \varphi_{{\mu_1} \dots {\mu_{k-1}}}
  + p^{{\mu_1} \dots {\mu_{k-1}} \lambda } \delta \varphi_{{\mu_1} \dots {\mu_{k-1}} , \lambda} \right)
  \otimes
     \mbox{\rm d}^n x   \ ,
\end{eqnarray*}
and
\begin{eqnarray*}
% \nonumber to remove numbering (before each equation)
  \widetilde{\widetilde{\omega}}^I &:=& \mbox{\rm d}\omega  = \mbox{\rm d} \delta \theta
    = \delta \mbox{\rm d} \theta = \delta \widetilde{\theta}^I\\
   &=& \left( \delta p^\lambda_{\ ,\lambda} \wedge \delta \varphi
  + \delta p^\lambda \wedge \delta \varphi_{ , \lambda} +
  \delta p^{\mu \lambda }_{\ \ ,\lambda} \wedge \delta \varphi_\mu +
  \delta p^{\mu \lambda } \wedge \delta \varphi_{\mu ,\lambda}
  \right. \\
  &+&
  \left.
   \delta p^{{\mu_1} \dots {\mu_{k-1}} \lambda }_{\ \ \ \ \ \ \ \ \ \ ,\lambda} \wedge \delta \varphi_{{\mu_1} \dots {\mu_{k-1}}}
  + \delta p^{{\mu_1} \dots {\mu_{k-1}} \lambda }
  \wedge \delta \varphi_{{\mu_1} \dots {\mu_{k-1}} , \lambda} \right)
  \otimes
     \mbox{\rm d}^n x   \ .
\end{eqnarray*}
In multi--index notation, above forms become, respectively,
\begin{eqnarray}
% \nonumber to remove numbering (before each equation)
  \widetilde{\widetilde{\theta}}^I &=& %\mbox{\rm d}\theta =
  \partial_\lambda
  \left( \sum_{|\mmu|\leq (k-1)} p^{\mmu \lambda} \delta \varphi_{\mmu}
    \right) \otimes
     \mbox{\rm d}^n x  \label{eqThetaTildeTilde}\\
   &=& \sum_{|\mmu|\leq (k-1)}\left(
    p^{\mmu \lambda} \delta \varphi_{\mmu ,\lambda}
    + p^{\mmu \lambda}_{\ \ ,\lambda}
    \delta \varphi_{\mmu}
    \right)
  \otimes
     \mbox{\rm d}^n x   \ ,\label{eqOmegaTildeTilde}
\end{eqnarray}
and
\begin{eqnarray*}
% \nonumber to remove numbering (before each equation)
  \widetilde{\widetilde{\omega}}^I %&:=& \mbox{\rm d}\omega  = \mbox{\rm d} \delta \theta
  %  = \delta \mbox{\rm d} \theta = \delta \widetilde{\theta}^I\\
   &=& \sum_{|\mmu|\leq (k-1)} \left(
    \delta p^{\mmu \lambda} \wedge \delta \varphi_{\mmu ,\lambda}
    + \delta p^{\mmu \lambda}_{\ \ ,\lambda} \wedge
    \delta \varphi_{\mmu} \right)
  \otimes
     \mbox{\rm d}^n x   \ .
\end{eqnarray*}
Much as   ${\widetilde{\cal P}}^I$ was a bundle over the ``infinitesimal configuration bundle'' ${\cal Q}^I $ in the first--order case (see Remark \ref{remPhaseBundlefibrereOnQI} above),
the space $\widetilde{\widetilde{\cal P}}^I$ is now a bundle over
\begin{equation}\label{eqFlaseConfBund}
  \widetilde{\cal Q}^I = J^1\Q = J^1(J^{k-1} \Phi )\ .
\end{equation}
However,  $ \widetilde{\cal Q}^I $ is   a  ``false infinitesimal configuration bundle'': the ``correct infinitesimal configuration bundle''
\begin{equation}\label{eqCorrectConfBund}
 {\cal Q}^I:= J^k \Phi
\end{equation}
is its proper submanifold.
\subsection{Constraining to the ``infinitesimal configuration bundle''}\label{StepEXTRAHiOrd}
The inclusion
\begin{equation}\label{eqIncluItJetSpaces}
 {\cal Q}^I\subset   \widetilde{\cal Q}^I
\end{equation}
of \eqref{eqCorrectConfBund} into  \eqref{eqFlaseConfBund} corresponds to the  following ``holonomic constraints'' imposed on configurations:
\begin{eqnarray}
% \nonumber to remove numbering (before each equation)
  \varphi_{ , \lambda} &=& \varphi_\lambda \ , \label{pocz}\\
  \varphi_{\mu ,\lambda} &=& \varphi_{\mu \lambda} \ , \nonumber \\
  \dots &=& \dots \ , \nonumber \\
  \varphi_{{\mu_1} \dots {\mu_{k-1}} , \lambda} &=&
  \varphi_{{\mu_1} \dots {\mu_{k-1}}  \lambda}  \ . \label{kon}
\end{eqnarray}
Submanifolds \eqref{eqIncluItJetSpaces} belong to a large class of canonical submanifolds in iterated jet spaces,   investigated by one of us (GM) in a recent paper \cite{MorenoCauchy}, were the above \emph{formulae} are obtained in the multi--index   notation, viz.
\begin{equation}\label{eqContrMI}
   \varphi_{\mmu ,\lambda} = \varphi_{\mmu \lambda} \ .
\end{equation}
It should be stressed  that the set of identities \eqref{eqContrMI} contains not only relations between the coordinates of $ \widetilde{\cal Q}^I$ but also, for
  $|\mmu|=k$, the  \emph{definition} of the new variable $\varphi_{\mmu \lambda}$, which did not exist before.\par
Now the correct analogues of the forms \eqref{thetaITilde} and \eqref{omegaITilde} can be obtained by restricting the bundles $\widetilde{\widetilde{\cal P}}^I$ and $\widetilde{\widetilde{\cal S}}^I$ over the submanifold ${\cal Q}^I$  of its base manifold $ \widetilde{\cal Q}^I$. The so--obtained bundle over  ${\cal Q}^I$ is denoted by $\widetilde{\cal P}^I$ and it   is equipped with the two canonical forms
\begin{eqnarray}
% \nonumber to remove numbering (before each equation)
  \widetilde{\theta}^I
   &=& \left\{ p^\lambda_{\ ,\lambda} \delta \varphi
  + \left( p^\mu  +
  p^{\mu \lambda }_{\ \ ,\lambda}\right) \delta \varphi_\mu +
  \left (p^{ {\mu_1}{\mu_2}} + p^{{\mu_1}{\mu_2} \lambda }_{\ \ \ \ \ \ ,\lambda}\right)\delta \varphi_{{\mu_1}{\mu_2}}
  \right. + \cdots\nonumber \\
   &+&
  \left.
    p^{{\mu_1} \dots {\mu_{k}}} \delta \varphi_{{\mu_1} \dots {\mu_{k}} } \right\}
  \otimes
     \mbox{\rm d}^n x   \, ,\label{eqThetatileIHI}
\end{eqnarray}
and
\begin{eqnarray}
% \nonumber to remove numbering (before each equation)
  \widetilde{\omega}^I
   &=& \left\{ \delta p^\lambda_{\ ,\lambda} \wedge \delta \varphi
  + \delta \left( p^\mu  +
  p^{\mu \lambda }_{\ \ ,\lambda}\right) \wedge \delta \varphi_\mu +
  \delta \left (p^{ {\mu_1}{\mu_2}} + p^{{\mu_1}{\mu_2} \lambda }_{\ \ \ \ \ \ ,\lambda}\right)\wedge \delta \varphi_{{\mu_1}{\mu_2}}
  \right.+ \cdots\nonumber\\
   &+&
  \left.
    \delta p^{{\mu_1} \dots {\mu_{k}}} \wedge \delta \varphi_{{\mu_1} \dots {\mu_{k}} } \right\}
  \otimes
     \mbox{\rm d}^n x   \ ,\label{eqOmegatileIHI}
\end{eqnarray}
obtained by restriction from \eqref{eqThetaTildeTilde} and \eqref{eqOmegaTildeTilde}, respectively.
\subsection{The infinitesimal phase bundle}\label{Step3HiOrd}
After this preliminary constraining of the base manifold $ \widetilde{\cal Q}^I$, we proceed with the symplectic reduction of the bundle $\widetilde{\cal P}^I$,  along the same lines sketched in Section \ref{ToyStep3}. Namely, the same formula \eqref{defInfPhasBund}, rewritten below, defines now the leaf  space
\begin{equation}\label{defInfPhasBundHI}
    {\cal P}^I :=\frac{ \widetilde{\cal P}^I }{{\textrm{degeneracy of }}\, \widetilde{\omega}^I} \ %  = J^1{\cal P}  / \sim \ .
 \end{equation}
of the space
 $\widetilde{\cal P}^I$ constructed in the above Section \ref{StepEXTRAHiOrd}, with respect to  the degeneracy distribution
of the 2--form  $\widetilde{\omega}^I$ defined by \eqref{eqOmegatileIHI}. \par

\begin{definition}
 The bundle ${\cal P}^I$ defined by \eqref{defInfPhasBundHI} is called the \emph{infinitesimal phase bundle}. It is equipped with the (volume--form--valued) \emph{infinitesimal symplectic form}  $\omega^I$, defined as the reduction of $\widetilde{\omega}^I$.
\end{definition}

Because jet coefficients $\varphi_{{\mu_1} \dots {\mu_k}}$ are totally symmetric, formula \eqref{eqOmegatileIHI} proves that a leaf of the degeneracy distribution, i.e., a point of ${\cal P}^I$, is uniquely determined by the following parameters:
\begin{eqnarray}
% \nonumber to remove numbering (before each equation)
  j &=& p^\lambda_{\ ,\lambda} \ ,\label{j-1}\\
  j^\mu  &=&  p^\mu  +
  p^{\mu \lambda }_{\ \ ,\lambda} \ , \nonumber\\
  j^{ {\mu_1}{\mu_2}} &=& p^{ ({\mu_1}{\mu_2})} + p^{{\mu_1}{\mu_2} \lambda }_{\ \ \ \ \ \ ,\lambda} \ , \nonumber\\
  \dots &=& \dots \ , \nonumber\\
  j^{{\mu_1} \dots {\mu_{k}}} &=& p^{({\mu_1} \dots {\mu_{k}})} \ , \label{j-k}
\end{eqnarray}
where the bracket denotes the complete symmetrisation.\footnote{Since   \eqref{sym-1}--\eqref{sym-2}, we began adopting the physicists' notations for the complete symmetrisation.} Being defined by   \eqref{j-1}--\eqref{j-k} as momenta canonically conjugate to the jet coefficients $\varphi_{{\mu_1} \dots {\mu_{l}} }$, the currents  $j^{{\mu_1} \dots {\mu_{l}}}$, $l=1,2,\dots , k$, are totally symmetric {\em a priori}. Consequently, as a result of  this symplectic reduction, only the completely symmetric part $p^{({\mu_1} \dots {\mu_{l}})}$ of the momenta $p^{{\mu_1} \dots {\mu_{l}}}$ come into play. We stress that at the beginning of our construction no symmetry was imposed on the last index: see \eqref{sym-1}. Splitting the momenta into their totally symmetric part $s$ and the remaining part $r$, namely
\begin{equation}\label{split-p}
    p^{{\mu_1} \dots {\mu_{l}}} = s^{{\mu_1} \dots {\mu_{l}}} + r^{{\mu_1} \dots {\mu_{l}}} \ ,
\end{equation}
where $s^{{\mu_1} \dots {\mu_{l}}} := p^{({\mu_1} \dots {\mu_{l}})}$ and $r^{({\mu_1} \dots {\mu_{l}})} = 0$, we see that the non--symmetric part $r^{{\mu_1} \dots {\mu_{l}}}$ corresponds to the  irrelevant or ``gauge'' degrees of freedom and disappear when we pass to the quotient ${\cal P}^I$.

\begin{theorem}\label{PI=SI}
The infinitesimal phase bundle can be obtained equivalently {\em via} the symmetric version of the theory:
\begin{equation}\label{defInfPhasBundHI-s}
    {\cal P}^I \simeq {\cal S}^I :=\frac{ \widetilde{\cal S}^I }{{\textrm{degeneracy of }}\, \widetilde{\omega}^I}  \ .
\end{equation}
\end{theorem}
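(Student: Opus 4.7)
The plan is to show that the canonical inclusion $\widetilde{\mathcal{S}}^I\hookrightarrow\widetilde{\mathcal{P}}^I$ induces an isomorphism of the two quotients by their respective $\widetilde{\omega}^I$-degeneracies. The guiding idea is the one already anticipated right after \eqref{split-p}: the invariants \eqref{j-1}--\eqref{j-k} that parametrise a leaf of $\widetilde{\omega}^I$ involve only the \emph{totally} symmetric combinations of the momenta, so the non-symmetric components $r^{\mu_1\cdots\mu_l}$ are pure gauge, and fixing the gauge $r\equiv 0$ is exactly what cuts out $\widetilde{\mathcal{S}}^I\subset\widetilde{\mathcal{P}}^I$.

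First I would verify that the $r$-directions, together with their natural jet-derivative partners, lie in $\ker\widetilde{\omega}^I$. This is a direct reading of \eqref{eqOmegatileIHI}: on the holonomic submanifold \eqref{pocz}--\eqref{kon} every appearance of $\delta p^{\mu_1\cdots\mu_l}$ or $\delta p^{\mu_1\cdots\mu_l\lambda}_{\ \ \ \ \ \ ,\lambda}$ is contracted with a totally symmetric jet coordinate $\delta\varphi_{\mu_1\cdots\mu_l}$ (or, after using \eqref{pocz}--\eqref{kon}, with the totally symmetric $\delta\varphi_{\mu_1\cdots\mu_l\lambda}$), and the pairing of a totally symmetric tensor with a tensor whose total symmetrization vanishes is zero.

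Second, I would use the above to check that $\widetilde{\omega}^I|_{\widetilde{\mathcal{S}}^I}$ and $\widetilde{\omega}^I$ have \emph{compatible} degeneracies. Writing at a point of $\widetilde{\mathcal{S}}^I$ the decomposition $T\widetilde{\mathcal{P}}^I = T\widetilde{\mathcal{S}}^I\oplus(r\text{-directions})$, the fact just proved gives $\ker\widetilde{\omega}^I = (r\text{-directions})\oplus\bigl(\ker\widetilde{\omega}^I\cap T\widetilde{\mathcal{S}}^I\bigr)$; and since the $r$-directions are annihilated by $\widetilde{\omega}^I$, the kernel of the restricted form coincides with $\ker\widetilde{\omega}^I\cap T\widetilde{\mathcal{S}}^I$. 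Consequently, a leaf of $\widetilde{\omega}^I$ in $\widetilde{\mathcal{P}}^I$ intersects $\widetilde{\mathcal{S}}^I$ in a single leaf of the restriction.

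The map $\Psi:\mathcal{S}^I\to\mathcal{P}^I$ induced by inclusion is then a bijection: \emph{surjectivity} is the gauge fixing $r=0$, which picks a canonical representative in every $\widetilde{\omega}^I$-leaf by flowing along the (kernel) $r$-directions; \emph{injectivity} is the observation just made, that two points of $\widetilde{\mathcal{S}}^I$ belonging to the same $\widetilde{\omega}^I$-leaf of $\widetilde{\mathcal{P}}^I$ already lie in the same leaf of $\widetilde{\omega}^I|_{\widetilde{\mathcal{S}}^I}$. Compatibility of the induced symplectic forms under $\Psi$ is tautological, since $\widetilde{\omega}^I|_{\widetilde{\mathcal{S}}^I}$ \emph{is} the pullback of $\widetilde{\omega}^I$ along the inclusion. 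The only genuine obstacle, and the reason why Appendix \ref{app-momentum-gauge} is needed, is to make the splitting \eqref{split-p} and the sub-bundle $\mathcal{S}\subset\mathcal{P}$ intrinsic (independent of the chosen fibre coordinates), so that the above argument really yields an isomorphism of bundles equipped with symplectic forms, rather than a merely local diffeomorphism.
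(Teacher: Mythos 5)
Your overall strategy---showing that the inclusion $\widetilde{\cal S}^I\hookrightarrow\widetilde{\cal P}^I$ induces a bijection of leaf spaces---is the right one, and it is essentially the argument the paper leaves implicit around \eqref{j-1}--\eqref{j-k} and \eqref{split-p}. But your first step contains a genuine error. It is true that a momentum--level variation $\delta p^{\mu_1\cdots\mu_l\lambda}=\chi^{\mu_1\cdots\mu_l\lambda}$ with $\chi^{(\mu_1\cdots\mu_l\lambda)}=0$ lies in $\ker\widetilde{\omega}^I$, since it enters the currents only through the total symmetrization $p^{(\mu_1\cdots\mu_l\lambda)}$. It is \emph{not} true for the ``jet--derivative partners'': a variation $\delta p^{\mu_1\cdots\mu_l\lambda}{}_{,\nu}=\xi^{\mu_1\cdots\mu_l\lambda}{}_{\nu}$ with $\xi^{(\mu_1\cdots\mu_l\lambda)}{}_{\nu}=0$ enters $\widetilde{\omega}^I$ through the trace $\xi^{\mu_1\cdots\mu_l\lambda}{}_{\lambda}$ in $\delta j^{\mu_1\cdots\mu_l}$, whose free indices are only $\mu_1\cdots\mu_l$---indices in which $\xi$ is \emph{already} symmetric by \eqref{sym-1}---so the ``symmetric tensor paired with a tensor of vanishing symmetrization'' cancellation does not apply. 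Concretely, for $l=1$ take $\xi^{\mu\lambda}{}_{\nu}$ antisymmetric in $(\mu,\lambda)$ with $\xi^{12}{}_{2}=1$ and all other independent components zero: then $\xi^{(\mu\lambda)}{}_{\nu}=0$ but $\xi^{1\lambda}{}_{\lambda}=1\neq 0$, so $\delta j^{1}\neq 0$ and this direction is not in the kernel. This is exactly the ``propagation'' phenomenon the paper isolates in Section \ref{momentum-gauge}: a gauge shift of $r^{\mu_1\cdots\mu_l\lambda}$ forces a compensating shift of the lower--order symmetric momenta through $\partial_\lambda r^{\mu_1\cdots\mu_l\lambda}$.

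Because of this, your decomposition $\ker\widetilde{\omega}^I=(r\text{-directions})\oplus\bigl(\ker\widetilde{\omega}^I\cap T\widetilde{\cal S}^I\bigr)$ fails as stated, and your surjectivity argument (``flow along the kernel $r$-directions to reach $r\equiv 0$'') breaks down: the flow needed to symmetrize the \emph{jet} coordinates leaves the leaf. The theorem and your plan are salvageable by replacing the direct sum with the weaker, correct statement $T\widetilde{\cal S}^I+\ker\widetilde{\omega}^I=T\widetilde{\cal P}^I$: the vector with components $\delta p^{\overline{\mu}}=-\xi^{\overline{\mu}\lambda}{}_{\lambda}$ and $\delta p^{\overline{\mu}\lambda}{}_{,\nu}=\xi^{\overline{\mu}\lambda}{}_{\nu}$ \emph{does} lie in the kernel, the two contributions to $\delta j^{\overline{\mu}}$ cancelling. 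This sum decomposition still yields $\ker\bigl(\widetilde{\omega}^I|_{\widetilde{\cal S}^I}\bigr)=\ker\widetilde{\omega}^I\cap T\widetilde{\cal S}^I$, hence your injectivity step; surjectivity should instead be checked directly, e.g.\ by noting that any admissible family of currents $j^{\overline{\mu}}$, being totally symmetric, is realized by a point of $\widetilde{\cal S}^I$ with totally symmetric momenta and suitably chosen (or vanishing) jet part. With these repairs the compatibility of the reduced forms is, as you say, tautological.
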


In the symmetric version of the theory the non--symmetric part $r^{{\mu_1} \dots {\mu_{l}}}$ of the momentum drops out from the very beginning and the symmetrisation operator in the definition \eqref{j-1}--\eqref{j-k} of the currents $j$ may be skipped.

The forms \eqref{eqThetatileIHI} and \eqref{eqOmegatileIHI} can be restricted to   the quotient space ${\cal P}^I$. This way,  we obtain the higher--order version of \eqref{thetaISenzaTilde} and
\eqref{OmegaIDifferenzialeDiThetaI}, respectively. In particular, using the  multi--index notation, we obtain the following coordinate expression for the infinitesimal symplectic form:
\begin{eqnarray}
% \nonumber to remove numbering (before each equation)
   {\omega}^I
   &=& \sum_{|\mmu|\leq k} \delta j^{\mmu } \wedge
    \delta \varphi_{\mmu}
  \otimes
      \mbox{\rm d}^n x   \ ,\label{eqDefOmegaIHI}
\end{eqnarray}
where, for $1 \leq |\mmu| \leq (k-1)$,
\begin{eqnarray*}
% \nonumber to remove numbering (before each equation)
  j &=& p^\lambda_{\ ,\lambda}
  \ \ \ \ \mbox{\rm for} \ \ \  |\mmu| = 0 \ ,\\
  j^{\mmu } &=& p^{\mmu } + p^{\mmu \lambda}_{\ \ ,\lambda}
  \ \ \ \ \mbox{\rm for} \ \ \ 1 \leq |\mmu| \leq (k-1)\ ,\\
  j^{\mmu } &=& p^{\mmu }
  \ \ \ \ \mbox{\rm for} \ \ \  |\mmu| = k\ .
\end{eqnarray*}
Also the 1--form $\widetilde{\theta}^I$ is compatible with this reduction and defines on ${\cal P}^I$ a primitive form%5${\theta}^I$
\begin{eqnarray}\label{theta^I}
% \nonumber to remove numbering (before each equation)
  {\theta}^I
   &=& \left( j \delta \varphi
  + j^\mu   \delta \varphi_\mu +
  j^{ {\mu_1}{\mu_2}} \delta \varphi_{{\mu_1}{\mu_2}}
  + \cdots +
    j^{{\mu_1} \dots {\mu_{k}}} \delta \varphi_{{\mu_1} \dots {\mu_{k}} } \right)
  \otimes
     \mbox{\rm d}^n x   \\
    &=&
    \sum_{|\mmu|\leq k}   j^{\mmu }
    \delta \varphi_{\mmu}
  \otimes
      \mbox{\rm d}^n x \  \nonumber
\end{eqnarray}
for ${\omega}^I$, in the sense that $\delta \theta^I = \omega^I$.

\begin{theorem}\label{ThToyModelHI}
 There is a canonical identification
 \begin{equation}\label{eqPrimaIdentificazioneCanonicaHI}
{\cal P}^I  \cong  V^*{\cal Q}^I
    \otimes_{{\cal Q}^I} \Lambda^{n}
    M
\end{equation}
of linear bundles over ${\cal Q}^I$.
\end{theorem}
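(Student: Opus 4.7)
The plan is to reduce the higher-order statement to the first-order one, viewing the whole $k$-th order theory as a first-order variational problem subject to regular holonomic Lagrangian constraints, and then invoke Theorem \ref{ThToyModel-red}. Concretely, I would regard $\Q = J^{k-1}\Phi$ as the configuration bundle of a bona fide first-order problem over $M$. Theorem \ref{ThToyModel} then provides a canonical identification between the ``oversized'' infinitesimal phase bundle attached to $\Q$ and $V^*\widetilde{\Q}^I \otimes_{\widetilde{\Q}^I} \Lambda^n M$, with $\widetilde{\Q}^I = J^1\Q = J^1(J^{k-1}\Phi)$ as in \eqref{eqFlaseConfBund}.

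Next, I would recognise the inclusion of $\Q^I = J^k\Phi$ inside $\widetilde{\Q}^I$ as the imposition of the holonomy constraints \eqref{pocz}--\eqref{kon}, equivalently \eqref{eqContrMI}. These are manifestly regular, placing us exactly in the setting of Section \ref{expansion}, with constraint submanifold $\C = \Q^I \subset \widetilde{\Q}^I$. By construction (Section \ref{StepEXTRAHiOrd}), the bundle $\widetilde{\P}^I$ is the restriction of $J^1\P$ over $\C$; and the degeneracy distribution used to define $\P^I$ in \eqref{defInfPhasBundHI} combines the standard ``trace'' degeneracy of the unconstrained first-order case (as in \eqref{defInfPhasBund}) with the additional Lagrange-multiplier gauge \eqref{delta-tilde-L} induced by the constraints. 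Therefore the quotient \eqref{defInfPhasBundHI} coincides with the reduced infinitesimal phase bundle $\P^I_{\mathrm{reduced}}$ of Theorem \ref{ThToyModel-red}, and substituting $\C = \Q^I$ in \eqref{eqPrimaIdentificazioneCanonica-red} yields exactly the sought identification \eqref{eqPrimaIdentificazioneCanonicaHI}.

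The main subtlety will be to confirm that the two quotient procedures really match and that the resulting bijection is truly canonical, not merely chart-wise. The coordinate check is transparent from \eqref{theta^I}: a point of $\P^I$ is recorded by $(x^\mu, \varphi_{\mmu}, j^{\mmu})$ with $0\leq|\mmu|\leq k$, and to it one associates the scalar-density-valued covector
\[
\sum_{|\mmu|\leq k} j^{\mmu}\,\delta\varphi_{\mmu}\otimes\dd^n x
\]
on $\Q^I$. This is well defined because $\varphi_{\mmu}$ and, as observed after \eqref{j-k}, the currents $j^{\mmu}$ are both totally symmetric in $\mmu$; it is surjective because \eqref{j-1}--\eqref{j-k} can be solved for representative momenta up to the ``non-symmetric'' gauge \eqref{split-p}, which drops out under the reduction; and it is injective because any information not captured by the $j^{\mmu}$ is precisely the degeneracy divided out in \eqref{defInfPhasBundHI}. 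Canonicality follows either by direct checking of compatibility under transition functions or --- more cleanly --- by working in the totally symmetric variant $\S^I$ provided by Theorem \ref{PI=SI}, where the non-symmetric component of the momentum is absent from the outset and the identification reduces to a coordinate-free rearrangement of \eqref{theta^I}.
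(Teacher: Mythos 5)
Your proposal is correct and follows essentially the same route as the paper: there, Theorem \ref{ThToyModelHI} is obtained by treating the $k\Th$ order problem as a first--order one on $\Q=J^{k-1}\Phi$ subject to the holonomy constraints \eqref{pocz}--\eqref{kon}, and then invoking the constrained--case identification of Section \ref{SecByGiovanni} (Corollary \ref{corEgr}, equivalently Theorem \ref{ThToyModel-red}) with $\C=\Q^I=J^k\Phi$. Your coordinate verification via the currents $j^{\mmu}$ and the appeal to the symmetric version ${\cal S}^I$ match the paper's formulae \eqref{j-1}--\eqref{j-k} and \eqref{theta^I}.
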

\begin{proof}
Carried out in Section \ref{SecByGiovanni}.
\end{proof}
Observe that the identification \eqref{eqPrimaIdentificazioneCanonicaHI} looks exactly the same as the similar identification
\eqref{eqPrimaIdentificazioneCanonica} from Theorem \ref{ThToyModel}: the difference is hidden in the   definition of the ``infinitesimal configuration bundle'' (compare \eqref{eqInfConfBund} and \eqref{eqCorrectConfBund}).
As before, the canonical identification \eqref{eqPrimaIdentificazioneCanonicaHI} corresponds to a   family of identifications of bundles over ${\cal Q}^I_x$:
\[
    {\cal P}^I_x \cong  T^*{\cal Q}^I_x
    \otimes \bigwedge^{n}
    T^*_x M \ , \quad \forall x\in M\ .
\]

\subsection{The higher--order Euler--Lagrange equations as a Lagrangian submanifold}\label{subHOELeqs}
Now we can carry out the last step and, in analogy with Section \ref{subELdynamics}, write down the Euler--Lagrange equations for a \emph{higher--order} Lagrangian as the generating \emph{formula} for a Lagrangian submanifold in $\P^I$. The same symbol ${\cal L}$, used in Section \ref{subELdynamics} for a first--order Lagrangian density, corresponds now to   ${\cal L} = L \, \mbox{\rm d}^n x$, where
\begin{equation}
    L = L(\varphi, \varphi_{\mu} , \varphi_{{\mu_1}{\mu_2}}, \dots\label{eqHILagr}
    , \varphi_{{\mu_1} \dots {\mu_{k}}})\, .
\end{equation}
Corollary \ref{corELeqsAreLagrSubs} is repeated verbatim here, except for the increased lengths of the system of first--order PDEs.
\begin{corollary}\label{corELeqsAreLagrSubsHI}
The Euler--Lagrange equations determined by ${\cal L}$ are equivalent to the generating formula for the Lagrangian submanifold ${\cal D} \subset {\cal P}^I$, according to equation
(cf.~also \eqref{eqGenFormula}):
\begin{equation}\label{defDynamicsHI}
\delta {\cal L} = \left. \theta^I\right|_{\cal D} \ .
\end{equation}

\end{corollary}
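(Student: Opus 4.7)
The plan is to mimic the proof of Corollary \ref{corELeqsAreLagrSubs} step by step, leveraging Theorem \ref{ThToyModelHI} to ensure that every ingredient makes sense in the higher--order setting. First, thanks to the canonical identification \eqref{eqPrimaIdentificazioneCanonicaHI}, the vertical differential $\delta\L$ is a well--defined (scalar--density--valued) covector on $\Q^I$, so
\[
\mathcal{D}_x := \mathrm{graph}(\delta\L_x) \subset \P^I_x
\]
is a submanifold whose dimension equals $\dim \Q^I_x$, i.e.,~exactly half of $\dim \P^I_x$. By construction of the graph, the generating formula $\delta\L = \left.\theta^I\right|_\mathcal{D}$ holds tautologically. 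Applying the vertical differential $\delta$ to both sides and using $\omega^I = \delta\theta^I$ together with $\delta^2 = 0$ yields $\left.\omega^I\right|_\mathcal{D} \equiv 0$, so $\mathcal{D}$ is isotropic and, by the dimensional count, Lagrangian.

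The substantive step is to show that, in local coordinates, this generating formula is equivalent to the Euler--Lagrange system associated with \eqref{eqHILagr}. Dropping the common volume factor ${\rm d}^n x$ and using \eqref{theta^I}, the condition $\delta\L = \left.\theta^I\right|_\mathcal{D}$ reads
\[
\sum_{|\mmu|\le k} \frac{\partial L}{\partial \varphi_{\mmu}}\,\delta \varphi_{\mmu} \;=\; \sum_{|\mmu|\le k} j^{\mmu}\,\delta \varphi_{\mmu}.
\]
Since on $\Q^I = J^k \Phi$ the symbols $\delta\varphi_{\mmu}$, $|\mmu|\le k$, form a basis of $V^*\Q^I$, this is equivalent to the pointwise system $j^{\mmu} = \partial L / \partial \varphi_{\mmu}$ for every multi--index $|\mmu|\le k$.

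Substituting the definitions \eqref{j-1}--\eqref{j-k} of the currents yields a recursive tower. The top line ($|\mmu|=k$) defines the highest--order momenta, $p^{\mmu} = \partial L/\partial \varphi_{\mmu}$; for $1\le |\mmu|\le k-1$ each equation reads $p^{\mmu} + p^{\mmu\lambda}_{\ \ ,\lambda} = \partial L/\partial \varphi_{\mmu}$, recursively determining $p^{\mmu}$ as $\partial L/\partial \varphi_{\mmu}$ minus the divergence of the next--higher momenta; finally, the $|\mmu|=0$ line yields $p^{\lambda}_{\ ,\lambda} = \partial L/\partial \varphi$. Eliminating the auxiliary momenta by iterated back--substitution collapses this tower into the classical $2k$--order Euler--Lagrange equation
\[
\sum_{|\mmu|\le k}(-1)^{|\mmu|}\,\partial_{\mmu}\!\left( \frac{\partial L}{\partial \varphi_{\mmu}}\right) = 0,
\]
while the intermediate lines survive as the Ostrogradski definitions of the canonical momenta.

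The main obstacle I anticipate is bookkeeping rather than anything conceptual. The non--symmetric components $r^{\mmu\lambda}$ of the momenta introduced in \eqref{split-p} might {\em a priori} spoil the recursion, but Theorem \ref{PI=SI} guarantees that they drop out on passing to $\P^I$, so the elimination is performed unambiguously using only the symmetric parts $s^{\mmu\lambda}$. Likewise, the commutativity of partial derivatives together with the total symmetry of the jet coordinates $\varphi_{\mmu}$ ensures that the iterated divergences combine cleanly into the expected alternating sum, completing the identification between $\mathcal{D}$ and the higher--order Euler--Lagrange dynamics.
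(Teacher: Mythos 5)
Your proposal is correct and follows essentially the same route as the paper: write the generating formula in coordinates using $\theta^I$ and the current definitions \eqref{j-1}--\eqref{j-k}, obtain the tower of first--order PDEs, and identify the top lines as the Ostrogradski momenta definitions and the bottom line as the Euler--Lagrange equation. The only additions are that you explicitly re--derive the isotropy of $\mathcal{D}$ (which the paper simply inherits verbatim from Corollary~\ref{corELeqsAreLagrSubs}) and carry the back--substitution all the way to the classical $2k$--order form, neither of which changes the argument.
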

\begin{proof}
Comparing with \eqref{theta^I} and taking into account the definition of the currents $j$, we see that  the equation \eqref{defDynamicsHI} captures the following list of first--order PDEs
\begin{eqnarray*}
% \nonumber to remove numbering (before each equation)
   p^\lambda_{\ ,\lambda} &=& \frac{\partial L}{\partial \varphi}\, , \\
   p^\mu  +
  p^{\mu \lambda }_{\ \ ,\lambda} &=& \frac{\partial L}{\partial \varphi_{\mu}}\, ,\\
  p^{( {\mu_1}{\mu_2})} + p^{{\mu_1}{\mu_2} \lambda }_{\ \ \ \ \ \ ,\lambda}
  &=& \frac{\partial L}{\partial \varphi_{{\mu_1}{\mu_2}}}\, ,\\
  \dots &=& \dots \ , \\
  p^{({\mu_1} \dots {\mu_{k}})}
  &=& \frac{\partial L}{\partial \varphi_{{\mu_1} \dots {\mu_{k}}}}\, ,
\end{eqnarray*}
which, in an equivalent form, read
\begin{eqnarray}
% \nonumber to remove numbering (before each equation)
  p^{({\mu_1} \dots {\mu_{k}})}
  &=& \frac{\partial L}{\partial \varphi_{{\mu_1} \dots {\mu_{k}}}}
  - 0\, ,\label{pierwsze}\\
  p^{({\mu_1}\dots{\mu_{k-1}})}
  &=& \frac{\partial L}{\partial \varphi_{{\mu_1}\dots {\mu_{k-1}}}} -
  \partial_\lambda p^{{\mu_1}\dots {\mu_{k-1}} \lambda }\, ,\nonumber \\
  \dots &=& \dots \ , \nonumber\\
  p^{ ({\mu_1}{\mu_2})}
  &=& \frac{\partial L}{\partial \varphi_{{\mu_1}{\mu_2}}}
  - \partial_\lambda p^{{\mu_1}{\mu_2}\lambda }\, ,\nonumber\\
  p^\mu
   &=& \frac{\partial L}{\partial \varphi_{\mu}}
   -\partial_\lambda p^{\mu \lambda }\, ,\label{ostatnje}
    \\
    0 &=& \frac{\partial L}{\partial \varphi}
    - \partial_\lambda p^{\lambda}\, .\label{eqELHI}
\end{eqnarray}
Finally, observe that the Euler--Lagrange equations determined by ${\cal L}$ appear in   \eqref{eqELHI}, whereas the remaining equations \eqref{pierwsze}--\eqref{ostatnje} contain the definition of the canonical momenta, in both the symmetric ${\cal S}^I$ and the non-symmetric ${\cal P}^I$ versions of the higher--order theory.  \end{proof}

\subsection{Momentum gauge and how to remove it}\label{momentum-gauge}

This is the appropriate moment to clarify  how, in the non--symmetric version of the theory,    the  definition \eqref{pierwsze}--\eqref{ostatnje} of the momenta depend  upon the gauge degrees of freedom. The main difficulty with respect to the $1\St$ order case is that any gauge adjustment in one of equations \eqref{pierwsze}--\eqref{ostatnje} propagates through the whole sequence.\par
  First, observe that the  derivatives of the Lagrangian function  $L$, with respect to the jet variables $\varphi_{{\mu_1} \dots {\mu_{l}}}$, are   unambiguously defined as totally symmetric tensor densities. Hence, equations \eqref{pierwsze}--\eqref{ostatnje}
  determine  only the symmetric part $s^{{\mu_1} \dots {\mu_{l}}}$ in the decomposition \eqref{split-p} of the momenta $p^{{\mu_1} \dots {\mu_{l}}}$. So, at a first glance, the remaining part $r^{{\mu_1} \dots {\mu_{l}}}$ of the momenta is totally free. This is not entirely true. Indeed,  derivatives $\partial_\lambda  r^{{\mu_1} \dots {\mu_{l-1}} \lambda}$ enter   the right hand side of \eqref{pierwsze}--\eqref{ostatnje} and force us to modify the lower--order symmetric part, namely $p^{({\mu_1} \dots {\mu_{l-1}})}$, in such a way that  a modification of $r^{{\mu_1} \dots {\mu_{l}}}$ by a term $\chi^{{\mu_1} \dots {\mu_{l}}}$, where $\chi^{({\mu_1} \dots {\mu_{l}})}=0$, implies the next modification, i.e.,  $p^{({\mu_1} \dots {\mu_{l-2}})}$ has to be modified ({\em modulo} a possible change of sign) by $\partial_{\lambda_1}\partial_{\lambda_2}  \chi^{{\mu_1} \dots {\mu_{l-2}} {\lambda_1}{\lambda_2}}$.\par
  It is easy to convince oneself  that a modification of a single non--symmetric object $r^{{\mu_1} \dots {\mu_{l}}}$ triggers a chain of modifications in the lower--order symmetric objects by means of the iterated   derivatives
\[
    \partial_{\lambda_1}\cdots \partial_{\lambda_m}  \chi^{{\mu_1} \dots {\mu_{l-m}} {\lambda_m}\dots {\lambda_1}} \ ,
\]
which eventually affect also equation \eqref{eqELHI}, by means of the $l\Th$ order derivative
\[
    \partial_{\lambda_1}\cdots \partial_{\lambda_l}  \chi^{{\lambda_l}\dots {\lambda_1}} =
    \partial_{\lambda_1}\cdots \partial_{\lambda_l}  \chi^{({\lambda_l}\dots {\lambda_1})}= 0\ .
\]
In other words,   given a solution of the field equation \eqref{eqELHI}, the above modifications produce another solution, but these are physically equivalent as they define   the same section of the configuration bundle ${\cal Q}$. Moreover, among all the equivalent solutions there is one with totally symmetric momenta $p^{{\mu_1} \dots {\mu_{l}}} = s^{{\mu_1} \dots {\mu_{l}}}$, since the above modification procedure can be used to annihilate the non--symmetric part $r^{{\mu_1} \dots {\mu_{l}}}$.\par

If we want to keep the one--to--one correspondence between the sections of the bundle $\Phi$ which satisfy the $2k\Th$  order system of Euler--Lagrange equations, and their canonical representation, i.e.,~the sections of the momentum bundle ${\cal P}$ which satisfy the system of first--order equations \eqref{pocz}--\eqref{kon} and \eqref{pierwsze}--\eqref{eqELHI}, we must  restrict \emph{from the very beginning}  the phase bundle ${\cal P}$  to its sub--bundle ${\cal S}\subset {\cal P}$, composed of totally symmetric momenta, avoiding the redundancy carried by the non--symmetric momenta $r^{{\mu_1} \dots {\mu_{l}}}$. This restriction of the phase bundle corresponds to the observation that later on, the momentum is always applied to {\em holonomic} jets only and, consequently, its non--symmetric part can be skipped form the very beginning. We conclude that the symmetric version of the theory, based on the symmetric phase bundle ${\cal S}$, is gauge--free, which  is very appealing from the conceptual point of view.  More details about the construction of ${\cal S}$     are put off in the Appendix \ref{app-momentum-gauge}.\par
On the other hand, keeping the gauge degrees of freedom represented by the non--symmetric part of the momenta is sometimes useful from the computational point of view, as illustrated by   Section \ref{secModLagrTotDiv} below. We stress, however, that both approaches are perfectly equivalent because the non--symmetric part of the momenta never comes into play:
every solution of the field equations on ${\cal P}$ has a unique, equivalent representation as a section of ${\cal S}$, fulfilling the corresponding symmetric version of the field equations. Indeed,   the infinitesimal phase space ${\cal P}^I$, obtained either from $J^1 {\cal P}$ or from $J^1 {\cal S}$ {\em via} the symplectic reduction is the same, so that both versions of the theory are equivalent.\par
It is worth noticing that the formula \eqref{eqELHI} extends immediately to infinite jets (see, e.g., \cite{MR2647288}).

\subsection{Modifying the Lagrangian by a total divergence}\label{secModLagrTotDiv}

Supplementing a Lagrangian by a total divergence does not influence the Euler--Lagrange equations, since the  new terms arising in the corresponding action functional are only boundary ones. A decent ``canonical version'' of the theory must follow this ``mathematical folklore''. However, adding, e.g.,~100 new derivatives to the Lagrangian, produces {\em a priori} 100 new momenta. How do we understand the equivalence? In this section we show that, indeed, the original theory and the theory based on the new, artificially obtained ``higher--order Lagrangian'', are equivalent in the sense of the symplectic reduction. \par

To begin with, take a vector--density--valued function
\begin{equation}\label{F-on-jets}
    F:=F^\lambda \dens{\lambda}
\end{equation}
defined on the bundle $J^{l-1}\Phi$, i.e.,
\begin{equation}
F^\lambda=F^\lambda(x,\varphi, \varphi_{\mu} , \varphi_{{\mu_1}{\mu_2}}, \dots
    , \varphi_{{\mu_1} \dots {\mu_{l-1}}})\ ,\quad \lambda=1,2,\ldots, n\ ,
\end{equation}
and consider the  $l\Th$  order Lagrangian $ {\cal L}_0$ defined as the   divergence of \eqref{F-on-jets}:
\begin{equation}\label{dF=L}
    {\cal L}_0 = {\rm d}F = (\partial_\lambda F^\lambda ) \mbox{\rm d}^n x = L_0 \mbox{\rm d}^n x \ .
\end{equation}
Below, we prove that  the dynamics corresponding to  $ {\cal L}_0$, trivial from the variational point of view,  is also symplectically trivial.
\begin{theorem}\label{trivial}
To  {any} section $ M \ni x \stackrel{f}{\rightarrow} \varphi(x) \in \Phi_x$ of the bundle $\Phi$ corresponds biuniquely a section
\begin{equation}\label{trivial-P}
    \sigma_f:= \left( j^{l-1}(f), p^\mu\right)
\end{equation}
 of $\P$ which is a solution of the Euler--Lagrange equations \eqref{pierwsze}--\eqref{eqELHI}. The   momenta $p^{\overline{\mu}\lambda}$, $0 \le |\overline{\mu}| \le l-1$,  appearing in \eqref{trivial-P} are unambiguously determined by the $l-1\St$ jet of $F$ \emph{via}\begin{equation}\label{trivial-P-P}
    p^{\overline{\mu}\lambda} := \frac{\partial F^\lambda}{\partial \varphi_{\overline{\mu}}} \ .
\end{equation}
\end{theorem}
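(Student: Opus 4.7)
My plan is to verify Theorem \ref{trivial} by direct computation: substitute the proposed section $\sigma_f$ into the full first--order system \eqref{pierwsze}--\eqref{eqELHI} and check that every equation holds identically on the holonomic jet $j^l(f)$. The starting point is the expansion of the divergence of the vector density \eqref{F-on-jets} as an explicit function on $J^l\Phi$,
$$
L_0 = \frac{\partial F^\lambda}{\partial x^\lambda} + \sum_{|\nnu|\leq l-1} \frac{\partial F^\lambda}{\partial \varphi_{\nnu}}\,\varphi_{\nnu\lambda},
$$
which makes the formal partial derivatives $\partial L_0/\partial \varphi_{\mmu}$ computable term by term in the multi--index formalism of Section \ref{StepEXTRAHiOrd}.

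Next I would sweep through \eqref{pierwsze}--\eqref{eqELHI} by decreasing length $|\mmu|$. The top equation, $|\mmu|=l$, is immediate: the only summand in the expansion above containing $\varphi_{\mmu}$ is linear in it, and its symmetrised derivative reproduces exactly $p^{(\mmu)}$ as prescribed by \eqref{trivial-P-P}. For the intermediate equations, $1\leq |\mmu|\leq l-1$, computing $\partial L_0/\partial \varphi_{\mmu}$ splits it into two kinds of terms: (i) a \emph{linear} piece, coming from the single explicit $\varphi$--factor, which is precisely the symmetrised $p^{(\mmu)}$ of \eqref{trivial-P-P}; and (ii) a \emph{second--order} piece $(\partial^2 F^\lambda/\partial \varphi_{\nnu}\partial \varphi_{\mmu})\,\varphi_{\nnu\lambda} + \partial^2 F^\lambda/\partial x^\lambda \partial \varphi_{\mmu}$. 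Evaluating the total divergence $\partial_\lambda p^{\mmu\lambda} = \partial_\lambda(\partial F^\lambda/\partial \varphi_{\mmu})$ on the holonomic jet reproduces \emph{verbatim} piece (ii); hence the two cancel in the right--hand side of the $|\mmu|$--equation of \eqref{pierwsze}--\eqref{ostatnje}, leaving exactly $p^{(\mmu)}$. Finally, the genuine Euler--Lagrange equation \eqref{eqELHI} is the $|\mmu|=0$ case: both $\partial L_0/\partial\varphi$ and $\partial_\lambda p^\lambda = \partial_\lambda(\partial F^\lambda/\partial\varphi)$ are given by the same expression, so the difference vanishes identically, confirming the variational triviality.

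The only real obstacle is the combinatorial bookkeeping: the derivative of the jet coordinate $\varphi_{\nnu\lambda}$ with respect to another jet coordinate $\varphi_{\mmu}$ must be handled carefully because of the \emph{a priori} symmetry of the multi--indices, and the symmetrisation brackets in \eqref{pierwsze}--\eqref{ostatnje} together with the split \eqref{split-p} must be tracked consistently. Working with sorted multi--indices, as already done in Sections \ref{Step0HiOrd}--\ref{Step3HiOrd}, keeps the multiplicities hidden and makes the cancellation between pieces (ii) and $\partial_\lambda p^{\mmu\lambda}$ transparent.

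The biunique correspondence $f \leftrightarrow \sigma_f$ is then built into the definition \eqref{trivial-P}--\eqref{trivial-P-P}: the bundle projection $\mathcal{P}\to\Phi$ sends $\sigma_f$ onto $f$, and conversely each base section $f$ uniquely determines $\sigma_f$, since both its configuration component $j^{l-1}(f)$ and its momentum component \eqref{trivial-P-P} are built directly from the $(l-1)$--jet of $f$.
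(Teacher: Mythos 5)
Your proposal is correct and follows essentially the same route as the paper: expand $L_0=\partial_\lambda F^\lambda+\sum_{\nnu}(\partial F^\lambda/\partial\varphi_{\nnu})\varphi_{\nnu\lambda}$, observe that $\partial L_0/\partial\varphi_{\mmu}$ splits into the symmetrised linear contribution $p^{(\mmu)}$ plus second--order terms that coincide verbatim with the holonomic divergence $\partial_\lambda p^{\mmu\lambda}$, and conclude $\partial L_0/\partial\varphi_{\mmu}-\partial_\lambda p^{\mmu\lambda}=p^{(\mmu)}$, which is exactly the system \eqref{pierwsze}--\eqref{eqELHI}. The only cosmetic difference is that the paper carries out this cancellation in one uniform multi--index identity (formulae \eqref{dF=L=cons-1}--\eqref{E-L-final}) rather than sweeping case by case through $|\mmu|=l$, intermediate, and $|\mmu|=0$.
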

\begin{proof}
As a consequence of \eqref{dF=L}, we have
\begin{equation}\label{dF=L=cons}
    L_0 = \frac{\partial F^\lambda}{\partial x^\lambda} + \sum_{0\le |{\overline{\nu}}|\le l-1}
    \frac{\partial  F^\lambda}{\partial \varphi_{\overline{\nu}}}  \varphi_{\overline{\nu}\lambda}
\end{equation}
and, consequently,
\begin{equation}\label{dF=L=cons-1}
    \frac{\partial L_0}{\partial \varphi_{\overline{\mu}}}  = \frac{\partial^2 F^\lambda}{\partial \varphi_{\overline{\mu}} \partial x^\lambda} + \sum_{0\le |{\overline{\nu}}|\le l-1}
    \frac{\partial^2 F^\lambda}{\partial \varphi_{\overline{\mu}}\partial \varphi_{\overline{\nu}}}  \varphi_{\overline{\nu}\lambda} + p^{({\overline{\mu}})} \ ,
\end{equation}
the last term coming from the linear, explicit dependence of \eqref{dF=L=cons} upon $\varphi_{\overline{\mu}}=\varphi_{\overline{\nu}\lambda} $ for $|{\overline{\nu}}|= l$, and definition \eqref{trivial-P-P} (of course, $p^{\overline{\mu}} = 0$ for $|{\overline{\mu}}| = 0$). But, according to \eqref{trivial-P-P}, we have
\begin{equation}\label{divergence-p}
    \partial_\lambda p^{\overline{\mu}\lambda} =
    \frac{\partial^2 F^\lambda}{\partial x^\lambda\partial \varphi_{\overline{\mu}} } + \sum_{0\le |{\overline{\nu}}|\le l-1}
    \frac{\partial^2 F^\lambda}{\partial \varphi_{\overline{\nu}}\partial \varphi_{\overline{\mu}}}  \varphi_{\overline{\nu}\lambda} \
\end{equation}
and, therefore, the Euler--Lagrange equations \eqref{pierwsze}--\eqref{eqELHI} are automatically satisfied:
\begin{equation}\label{E-L-final}
    \frac{\partial L_0}{\partial \varphi_{\overline{\mu}}} - \partial_\lambda p^{\overline{\mu}\lambda} = p^{({\overline{\mu}})} \ .
\end{equation}
\par \

\end{proof}
Theorem \ref{trivial} can be made  ``totally symmetric'' according   to the philosophy discussed in Section \ref{momentum-gauge}, by assigning to the section $f$ of $\Phi$ a   section $s_f$ of the sub--bundle ${\cal S} \subset {\cal P}$, instead of a section $\sigma_f$ of the whole bundle $\P$. In this version, however,  formula \eqref{trivial-P-P}  becomes much more complicated and reduces to $s^{\overline{\mu}}= p^{(\overline{\mu})}$ only for the highest--order momenta, i.e.,~when $|{\overline{\mu}}|= l$, whereas  lower--order momenta need further modifications, according to the analysis carried out in Section \ref{momentum-gauge}. This is a typical circumstance when   the \virg{totally  symmetric version} of the theory turns out to be  computationally  unfriendly. The authors guess that this is the {\em very reason} why a consistent ``canonical'' theory for higher order Lagrangians was never written before, even if all the ingredients were ready more than 35 years ago\ldots .

\begin{remark}
 Theorem \ref{trivial}  can be reformulated as follows. Treat formulae \eqref{trivial-P}--\eqref{trivial-P-P}   as a definition of  the momentum constraints in the infinitesimal configuration bundle ${\cal P}^I$. Such  constraints define a Lagrangian submanifold, restricted to which  the infinitesimal symplectic form becomes totally degenerate. In other words, there is a  unique degeneracy leaf, so that  the corresponding symplectic reduction leads to a trivial space, i.e.,     trivial dynamics.
\end{remark}
Consider now a non--trivial Lagrangian ${\cal L}$ of order $k$ and supplement it by the complete divergence \eqref{dF=L}:
\begin{equation}\label{L+divergence}
\widetilde{ {\cal L} }:={\cal L} + {\rm d}F\ .
\end{equation}
Suppose first that $l \le k$. The following theorem is a simple corollary of our previous considerations:
\begin{theorem}\label{trivial-1}
Consider the symplectomorphism ${\cal F}:{\cal P}^I \mapsto {\cal P}^I $ of ${\cal P}^I$ generated by the shift of momenta:
\begin{equation}\label{momentum-shift}
    p^{\overline{\mu}\lambda} \rightarrow p^{\overline{\mu}\lambda} + \frac{\partial F^\lambda}{\partial \varphi_{\overline{\mu}}} \ .
\end{equation}
If $\sigma$ is a section of ${\cal P}$ satisfying the Euler--Lagrange equations \eqref{pierwsze}--\eqref{eqELHI} generated by ${\cal L}$, then ${\cal F}\circ \sigma$ satisfies the Euler--Lagrange equations generated by $\widetilde{ {\cal L} }$ and {\em vice--versa}.

\end{theorem}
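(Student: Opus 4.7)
The plan is to identify $\mathcal{F}$ with the fiber translation of $\mathcal{P}^I \cong V^*\mathcal{Q}^I \otimes_{\mathcal{Q}^I} \Lambda^n M$ (Theorem \ref{ThToyModelHI}) by the section $\delta L_0$ associated with $L_0 = \partial_\lambda F^\lambda$, and then to invoke the linearity of the generating formula \eqref{defDynamicsHI} in the Lagrangian density to conclude that $\mathcal{F}(\mathcal{D}) = \widetilde{\mathcal{D}}$.

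First I would verify that the momentum shift \eqref{momentum-shift}, lifted to $J^1\mathcal{P}$ by applying the total derivative to $\partial F^\lambda/\partial\varphi_{\mmu}$, descends through the symplectic reduction \eqref{defInfPhasBundHI} to the uniform current translation $j^{\mmu} \mapsto j^{\mmu} + \partial L_0/\partial\varphi_{\mmu}$ for every $\mmu$ with $|\mmu|\le k$. This is a multi-index computation in which the contribution of $\partial_\lambda(\partial F^\lambda/\partial\varphi_{\mmu})$, produced by the trace $p^{\mmu\lambda}_{,\lambda}$, combines with the shift of the lower-order piece $p^{(\mmu)}$ sitting inside \eqref{j-1}--\eqref{j-k} (namely $\partial F^\lambda/\partial\varphi_{\mmu'}$ for $\mmu'\lambda = \mmu$), the two pieces summing to exactly $\partial L_0/\partial\varphi_{\mmu}$ by virtue of the identities \eqref{dF=L=cons}--\eqref{E-L-final} already established in the proof of Theorem \ref{trivial}.

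Once this identification is in place, the symplectic property of $\mathcal{F}$ is immediate: the extra two--form introduced by the current translation is proportional to $\sum_{\mmu,\nnu} \bigl(\partial^2 L_0/\partial\varphi_{\nnu}\partial\varphi_{\mmu}\bigr)\,\delta\varphi_{\nnu}\wedge\delta\varphi_{\mmu}\otimes \mathrm{d}^n x$, which vanishes by the symmetry of the Hessian paired against the antisymmetry of the wedge product. Equivalently, $\delta L_0$ is vertically closed (being itself a vertical differential), and fiber translations of a cotangent--type bundle by closed one--forms always preserve the tautological symplectic structure.

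The equivalence of solutions then follows at once. By \eqref{defDynamicsHI}, the dynamics of $\widetilde{\mathcal{L}} = \mathcal{L} + \mathrm{d}F$ is the Lagrangian submanifold $\widetilde{\mathcal{D}} = \mathrm{graph}\bigl(\delta\mathcal{L} + \delta L_0\bigr) = \mathcal{F}(\mathcal{D})$, so a section $\sigma$ of $\mathcal{P}$ satisfies the original Euler--Lagrange system if and only if $\mathcal{F}\circ\sigma$ (read through the induced section of $\mathcal{P}^I$) satisfies the modified one; the converse direction is obtained by running the same argument with $-F$ in place of $F$, which yields $\mathcal{F}^{-1}$. The only genuinely technical step is the multi--index bookkeeping of the current translation, everything else being automatic from Theorem \ref{trivial} and the linearity of the Euler--Lagrange system in $\mathcal{L}$.
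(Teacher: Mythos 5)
Your proposal is correct and follows essentially the route the paper intends: Theorem \ref{trivial-1} is presented there as a direct corollary of Theorem \ref{trivial}, i.e.\ of the identity \eqref{E-L-final} combined with the linearity of the system \eqref{pierwsze}--\eqref{eqELHI} in the pair $(L,p^{\overline{\mu}\lambda})$, which is exactly the content of your current--translation computation $j^{\mmu}\mapsto j^{\mmu}+\partial L_0/\partial\varphi_{\mmu}$ and the resulting equality $\mathcal{F}(\mathcal{D})=\mathrm{graph}(\delta\mathcal{L}+\delta\mathcal{L}_0)=\widetilde{\mathcal{D}}$. You merely make explicit the descent of the shift through the reduction \eqref{defInfPhasBundHI} and the symplectomorphism property ($\delta\delta L_0=0$), details the paper leaves implicit.
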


If $l \ge k$ then, {\em a priori}, \eqref{L+divergence} increases the order of the variational problem in question. Nevertheless, due to   Theorem \ref{trivial-1}, the theory is equivalent to the $k\Th$ order theory and the equivalence is given by the inverse of the shift \eqref{momentum-shift}. In the infinitesimal phase space ${\cal P}^I$ of this theory we have, after such a shift, the momentum constraints $p^{(\overline{\mu})}=0$ for $|\overline{\mu}| > k $, implied by the higher--order Euler--Lagrange equations. The symplectic reduction with respect to these constraints reproduces the phase bundle and the dynamics corresponding to ${\cal L}$. The ``symmetric'' version of the formula \eqref{momentum-shift} follows immediately {\em via} the chain of gauge transformations discussed in Section \ref{momentum-gauge} and is much more complicated. This is why we decided to give the parallel construction of  both the symmetric and the non--symmetric versions of the theory.

\section{Representation of the infinitesimal phase bundle as the bundle of vertical covectors on the infinitesimal configuration bundle}\label{SecByGiovanni}

In the key Theorems \ref{ThToyModel} and \ref{ThToyModelHI} above we made use of the fact that the infinitesimal phase bundle ${\cal P}^I$ can be thought of as the bundle of (volume--form--valued) vertical covectors on the infinitesimal configuration bundle ${\cal Q}^I$, in order to be able to claim, in the subsequent Corollaries \ref{corELeqsAreLagrSubs} and \ref{corELeqsAreLagrSubsHI}, that the vertical differential of a Lagrangian density is, in fact, a Lagrangian submanifold of ${\cal P}^I$. In this section we clarify this crucial property, namely,  we show that there is a canonical bundle identification
\begin{equation}\label{eqDaDimostrare}
\xymatrix{
\P^I \ar@{=}[rr]\ar[dr]&& V^\ast  \Q^I \otimes_{ \Q^I}   \Lambda^{n}M\ar[dl]\\
& M\, .&
}
\end{equation}
%This identification is implied by the following
Recall that, by its very definition
\eqref{defInfPhasBund}, $\P^I$ is the leaf space of the degeneracy distribution of $\widetilde{\omega}^I$, so that working directly on it may be a little uncomfortable. So, we shall adopt an indirect  strategy, and obtain the desired result   \eqref{eqDaDimostrare}    as an immediate consequence of    Theorem \ref{thEgr} below.
\subsection{The unconstrained case}
\begin{theorem}\label{thEgr}
 There is a canonical mapping $\Psi$ respecting the fibrations over $\Q^I$, i.e., making the following diagram commutative
\begin{equation}\label{eqDaDimostrareMODIF}
\xymatrix{
J^1{\P}\ar@{->>}[rr]^{\Psi}\ar[dr]&&  V^\ast  \Q^I \otimes_{ \Q^I}   \Lambda^{n}M\ar[dl]\\
&\Q^I,&
}
\end{equation}
and such that, for every point $\eta$ of the target space, the inverse image $\Psi^{-1}(\eta)$ is a degeneracy leaf of $\widetilde{\omega}^I$. In particular, $\Psi$ is surjective.
\end{theorem}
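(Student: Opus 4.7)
The strategy is to define $\Psi$ intrinsically \emph{via} the operator ``$\mathrm{d}\circ(\mbox{contraction})$'', and then to check both the well-definedness and the fibre description by a short coordinate computation. Given $\xi \in J^1\P$ sitting above $x \in M$, pick a local section $\sigma$ of $\P\to M$ near $x$ with $j^1_x\sigma = \xi$. Its projection $\Pi\circ\sigma$ is a local section of $\Q\to M$ whose $1$-jet at $x$ equals $j^1(\Pi)(\xi)\in\Q^I$; call this point $\bar\xi$. To specify $\Psi(\xi)\in V^*_{\bar\xi}\Q^I \otimes \Lambda^n T^*_xM$ it suffices to pair it against an arbitrary $v \in V_{\bar\xi}\Q^I$. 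Any such $v$ is of the form $j^1_x\tau$ for some section $\tau$ of $V\Q$ along $\Pi\circ\sigma$, and since $\sigma$ takes values in $V^*\Q\otimes\Lambda^{n-1}M$, the natural contraction $\sigma(\tau)$ is an $(n-1)$-form on $M$ near $x$. I set
\[
\bigl\langle \Psi(\xi),\, v \bigr\rangle \;:=\; \mathrm{d}\bigl(\sigma(\tau)\bigr)\big|_x \;\in\; \Lambda^n T^*_xM.
\]

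A short coordinate calculation does the rest. With $\sigma = p^\mu\,\delta\varphi \otimes \dens{\mu}$ and $\tau = \eta\,\partial_\varphi$, one has $\sigma(\tau) = p^\mu\eta\,\dens{\mu}$, hence
\[
\mathrm{d}\bigl(\sigma(\tau)\bigr)\big|_x = \bigl(p^\mu_{\ ,\mu}\,\eta + p^\mu\,\eta_{,\mu}\bigr)\big|_x\,\mathrm{d}^n x.
\]
This depends only on the values at $x$ of $p^\mu$, $p^\mu_{\ ,\mu}$, $\eta$ and $\eta_{,\mu}$; the first two are the relevant pieces of $\xi = j^1_x\sigma$, and the last two are the coordinates of $v = j^1_x\tau$ in the basis $\{\partial_\varphi,\,\partial_{\varphi_{,\mu}}\}$. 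It follows that $\Psi(\xi)$ is independent of the choices of representatives, is $\R$-linear in $v$, and has coordinate expression $\bigl(p^\mu_{\ ,\mu}\,\delta\varphi + p^\mu\,\delta\varphi_{,\mu}\bigr)\otimes\mathrm{d}^n x$; in other words, $\Psi$ is just the fibrewise reading of $\widetilde{\theta}^I$ from \eqref{thetaITilde} as an element of $V^*\Q^I\otimes\Lambda^n M$, and by construction covers the identity on $\Q^I$, so the diagram \eqref{eqDaDimostrareMODIF} commutes.

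The remaining assertions follow directly. For surjectivity, any $\bigl(a\,\delta\varphi + b^\mu\,\delta\varphi_{,\mu}\bigr)\otimes\mathrm{d}^n x$ above $(x,\varphi,\varphi_{,\mu})$ is hit by any $\xi$ with $p^\mu = b^\mu$ and $p^\mu_{\ ,\mu} = a$, the remaining components $p^\mu_{\ ,\nu}$ ($\mu\neq\nu$) being free. For the fibre description, contracting a generic vertical tangent vector $A\,\partial_\varphi + B^\mu\,\partial_{p^\mu} + C_\mu\,\partial_{\varphi_{,\mu}} + D^\mu_{\ \nu}\,\partial_{p^\mu_{\ ,\nu}}$ against $\widetilde{\omega}^I$ from \eqref{omegaITilde} yields $A = B^\mu = C_\mu = D^\mu_{\ \mu} = 0$, so the degeneracy distribution of $\widetilde{\omega}^I$ consists exactly of the $(n^2-1)$-dimensional subspace of directions $\partial_{p^\mu_{\ ,\nu}}$ with $D^\mu_{\ \mu} = 0$. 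The explicit formula for $\Psi$ shows that these, and only these, directions are collapsed; hence $\Psi^{-1}(\eta)$ is precisely one degeneracy leaf for every $\eta$ in the target.

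The one delicate step is the intrinsic well-definedness: replacing $\sigma$ by $\sigma'$ with $j^1_x\sigma' = \xi$ perturbs the base $\Pi\circ\sigma$ to $\Pi\circ\sigma'$, which agree only up to first order at $x$, so a $\tau'$ along $\Pi\circ\sigma'$ with $j^1_x\tau' = v$ lives along a \emph{different} curve in $\Q$ from $\tau$ and cannot be compared with it \emph{tout court}. The coordinate formula bypasses the issue by displaying $(\sigma,\tau)\mapsto\mathrm{d}(\sigma(\tau))$ as a first-order bilinear differential operator whose value at $x$ factors through the $1$-jets of its inputs. I expect this to be the only point of the proof requiring more than bookkeeping in jet coordinates; everything else reduces to reading off the formula \eqref{omegaITilde} for $\widetilde{\omega}^I$.
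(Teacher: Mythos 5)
Your proposal is correct and follows essentially the same route as the paper's own proof: defining $\Psi$ by representing the jet $\xi$ and the vertical vector $v$ as first jets of sections $\sigma$ of $\P$ and $\tau$ of $V\Q$ over a common section of $\Q$ (implicitly invoking the identification $VJ^1\Q\cong J^1V\Q$, which the paper isolates as Proposition~\ref{ProposizioneFondamentale}), pairing them to an $(n-1)$-form, applying $\mathrm{d}$, and verifying well-definedness and the fibre description through the same coordinate formula $\bigl(p^\mu_{\ ,\mu}\,\eta+p^\mu\,\eta_{,\mu}\bigr)\,\mathrm{d}^n x$. The delicate point you flag — that the representing sections live over different base curves and can only be compared after observing that the bilinear operator factors through $1$-jets — is exactly the issue the paper resolves in the same way.
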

%
%{\bf Theorem:}
Before commencing the proof, a key preliminary result must be given. Indeed, by its definition \eqref{eqDefPhaseBundle}, $\P$ is made of (vector--density--valued) vertical covectors on $\Q$, so that, informally speaking,
\begin{equation}
J^1{\P}=J^1(V^\ast Q\otimes\Lambda^{n-1}M)\, .\label{eqThEgr1}
\end{equation}
On the other hand, in view of the definition \eqref{eqInfConfBund} of the infinitesimal configuration bundle $\Q^I$, the right--hand side of \eqref{eqDaDimostrareMODIF} is made of (volume--form--valued) vertical covectors on  $\Q^I$, viz.
\begin{equation}
V^\ast  \Q^I \otimes   \Lambda^{n}M=V^\ast  (J^1\Q) \otimes  \Lambda^{n}M\, .\label{eqThEgr2}
\end{equation}
Forgetting about the base spaces involved, which were deliberately skipped in the tensor products, a quick comparison of the right--hand sides of \eqref{eqThEgr1} and \eqref{eqThEgr2} reveals  that they are made of the same  symbols $J^1$, $V$, $\Q$, $\Lambda$ and $M$, with only two, yet remarkable, differences:
\begin{itemize}
\item the order of \virg{$J^1$} and \virg{$V$} is interchanged;
\item \virg{$\Lambda$} stands for $(n-1)$--forms in  \eqref{eqThEgr1} and  for $n$--forms in \eqref{eqThEgr2}.
\end{itemize}
So, we should expect that  the desired mapping \eqref{eqDaDimostrareMODIF} stems from a natural isomorphism
\begin{equation}\label{eqJ1VugualeVJ1}
J^1V\Q\cong V J^1\Q\, ,
\end{equation}
followed by a differentiation of  $(n-1)$--forms.
\begin{proposition}\label{ProposizioneFondamentale}
 The identification \eqref{eqJ1VugualeVJ1} is valid and reads
 \begin{equation}
(\phi,v,\phi_\mu,v_\mu)\longleftrightarrow (\phi,\phi_\mu,v,v_\mu)\label{eqJ1VugualeVJ1COORD}
\end{equation}
 in local coordinates.
\end{proposition}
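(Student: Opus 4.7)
The plan is to construct a canonical bundle morphism $\Phi: J^1V\Q \to VJ^1\Q$, verify its coordinate form, and then argue it is well-defined and invertible. To unpack both sides: a point of $J^1V\Q$ over $x\in M$ is the $1$-jet at $x$ of a section $M \to V\Q$, which amounts to a section $\phi: M \to \Q$ together with a vertical vector field $v$ along $\phi$. A point of $VJ^1\Q$ over $j^1_x\phi \in J^1\Q$ is a tangent vector $\xi$ to $J^1\Q$ at $j^1_x\phi$ that projects to zero on $M$; by the standard representation of jets by curves of sections, any such $\xi$ may be written as $\frac{d}{dt}\big|_{t=0}\, j^1_x \phi_t$ for some one-parameter family of sections $\phi_t$ with $\phi_0 = \phi$.

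I would then define
\[
\Phi\bigl(j^1_x(\phi, v)\bigr) := \frac{d}{dt}\Big|_{t=0}\, j^1_x \phi_t,
\]
where $\phi_t$ is any smooth deformation of $\phi$ satisfying $\frac{d}{dt}\big|_0 \phi_t = v$. Such deformations exist locally (for instance, $\phi_t(x) = \phi(x) + tv(x)$ in affine fiber charts, or the flow of a vertical extension of $v$). The key observation is Schwarz's lemma: $\frac{d}{dt}\big|_0 \partial_\mu \phi_t = \partial_\mu v$, so in fibered coordinates $\Phi$ reads $(\phi^K, v^K, \phi^K_\mu, v^K_\mu) \mapsto (\phi^K, \phi^K_\mu, v^K, v^K_\mu)$, matching \eqref{eqJ1VugualeVJ1COORD}. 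This coordinate formula shows both that $\Phi$ depends only on the $1$-jet of $(\phi, v)$ (and not on the chosen extension $\phi_t$), and that $\Phi$ is a fiberwise bijection; the inverse assigns to a vertical vector $\xi$ at $j^1_x\phi$ the $1$-jet at $x$ of the vertical vector field along $\phi$ obtained by evaluating $\xi$ on the relevant coordinate functions.

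The main obstacle is proving naturality, i.e.\ that the definition of $\Phi$ is intrinsic and not merely a coordinate reshuffling. Under a fibered change $\tilde\phi^K = \tilde\phi^K(x^\mu, \phi^L)$, the induced transformations of the jet coordinate $\phi^K_\mu$ and of the vertical coordinate $v^K$ both involve the Jacobian $\partial\tilde\phi^K/\partial\phi^L$, while the transformation of the mixed coordinate $v^K_\mu$ mixes $x$-derivatives and $\phi$-derivatives of this Jacobian. One must verify that the two chains of rules---``first $V$ then $J^1$'' and ``first $J^1$ then $V$''---produce the same final transformation. This boils down, once again, to the equality of mixed partial derivatives of the transition functions with respect to $x^\mu$ and to the fiber deformation parameter $t$, i.e.\ to the fact that $J^1$ and $V$ commute as functors on the category of fibered manifolds over $M$. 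The careful global verification, phrased in the language of nonlinear differential operators between fibre bundles, is the technical content deferred by the authors to Section \ref{appVJ1UgJ1V}.
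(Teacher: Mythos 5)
Your construction is correct, and its engine is the same one the paper's appendix runs on: the vertical tangent vector $\frac{d}{dt}\big|_{0}\, j^1_x\phi_t$ is precisely the geometric description given there of the contact lift \eqref{eqFormulaSoll}, and in both arguments the entire content is the equality of mixed partials $\frac{d}{dt}\big|_0\partial_\mu\phi_t=\partial_\mu\bigl(\frac{d}{dt}\big|_0\phi_t\bigr)$. Where you differ is in the packaging. You define the map pointwise on jets and verify directly, via the coordinate formula, that the result depends only on the $1$--jet of $(\phi,v)$ at $x$; the paper instead works at the level of sections: it lifts a section $X=\check X\circ s_X$ of $V\Q\to M$ to the section $\widetilde{\check X}\circ j_1(s_X)$ of $VJ^1\Q\to M$, observes that in coordinates this assignment is a first--order differential operator, and then invokes the representability lemma (Proposition \ref{propRappresentabilita}) to conclude that the operator is induced by a unique smooth fibered map $\kappa:J^1V\Q\to VJ^1\Q$, whose injectivity is read off from the same coordinate ``flip''. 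Your route is more elementary and shorter, since it bypasses the operator formalism entirely; the paper's route buys the connection with the surrounding machinery (contact lifts, total derivatives, representability of operators by fibered maps) that the authors want on record for other purposes. One small clean-up: your closing paragraph on naturality under fibered changes of coordinates is not needed as a separate step. Your definition is already coordinate--free (curves of sections and $\frac{d}{dt}$ of their prolongations make no reference to a trivialization), so the only genuine well--definedness issues are independence of the chosen deformation $\phi_t$ and dependence only on the $1$--jet of $(\phi,v)$ at $x$; both are settled by the Schwarz computation you already performed, and the compatibility of the two chains of transformation rules is then a consequence rather than an extra hypothesis to check.
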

\begin{proof}
 Local identity \eqref{eqJ1VugualeVJ1COORD} should be enough to convince oneself of the validity of \eqref{eqJ1VugualeVJ1}. A general rigorous proof takes much more space and it is put off (see Section \ref{appVJ1UgJ1V} later on).
\end{proof}
We are now in position to prove the main result of this section. The   idea of the proof is rather simple, and it comes down to  using  Proposition \ref{ProposizioneFondamentale} to make an element of $J^1{\cal P}$ act on  vertical covectors on ${\cal Q}^I$, and then   differentiating the result in order to get an $n$--form. The only difficulty consists in keeping track of the correct bundle structure one must work with. In particular, we shall make use of the projection $\Pi:\P\to Q$ (cf.     diagram \eqref{eqDaDimostrareMODIF1}).
\begin{proof}[Proof of Theorem \ref{thEgr}] It will be carried out in three steps. First, we define the map $\Psi$, second we prove that the definition is well--behaved and, last, we show that the fibres of  $\Psi$ are precisely the degeneracy leaves of $\widetilde{\omega}^I$.\par
 In order to define the image $\Psi(\zeta) \in V^\ast  \Q^I \otimes_{ \Q^I }   \Lambda^{n}M $ of the generic  element $\zeta\in  J^1{\P}$, observe initially  that, since $\zeta$ is attached  to a point, let us call it  $x_0$, of $M$, i.e.,~$\zeta \in J^1_{x_0}{\P}$, then its image  $\Psi(\zeta)$ has to be sought for  in the fibre $ V^\ast  \Q^I_{x_0} \otimes_{ \Q^I_{x_0}}   \Lambda_{x_0}^{n}M $. To this end, $\zeta$ must act on a generic  vector $w$, which is tangent to the bundle $\Q^I$, and is also vertical with respect to the projection down to $M$. Moreover, since $\Psi$ has to be a morphism of bundles over $\Q^I$, the vector $w$ must, in particular, belong to $T_{q^I}\Q^I_{x_0}$, where $q^I \in \Q^I $ is the left--hand side projection of $\zeta$ in diagram \eqref{eqDaDimostrareMODIF}.\par
 In other words, we have to look for a natural  $(\Lambda^{n}_{x_0}M )$--valued pairing
\begin{equation}\label{inf-evaluation}
    <w , \Psi(\zeta) > \ \in \Lambda^{n}_{x_0}M \, .
\end{equation}
 In order to define correctly  \eqref{inf-evaluation}, it is indispensable  use Proposition \ref{ProposizioneFondamentale}. Indeed, in view of
\begin{equation}\label{wertykal}
    w \in V \Q^I = V J^1\Q = J^1 (V\Q) \, ,
\end{equation}
both $w$ and $\zeta$ in \eqref{inf-evaluation} can be regarded as the first jets of dual quantities, i.e., apt to be paired each other. More precisely, we can ``extend'' both $w$ and $\zeta$ to the first--order jet of a section $v$ and $p$ of the bundles $\P$ and $V\Q$ over $M$, respectively. On the top of that, $v$ and $p$ can be chosen lying over the \emph{same} section $s$ of $\Q$. In practice, we have   chosen a section $p$  which represents $\zeta$, i.e.,~such that $\zeta = j^1_{x_0}(p) \in J^1_{x_0}{\P}$, we defined $s$ as its projection, $s:= \Pi(p)$, and, in view of \eqref{wertykal}, we   represented the vector $w$ as the first jet of a section $v$ of the bundle $V\Q$, i.e.,
\begin{equation}\label{w=jv}
    w = j^1_{x_0} v \, ,
\end{equation}
where $v$ has been chosen in such a way\footnote{This means that $v$ is a vertical vector field ``along'' the graph of $s$.}  that its projection on $\Q$ is equal to $s$.    Diagram \eqref{eqDaDimostrareMODIF1-bis} below gives some perspective on the sections introduced so far:     the departing point is the element $\zeta$ and, while its projections $q^I$ and $x_0$ are uniquely defined, the sections $p$ and $s$ representing $\zeta$ and $q^I$, respectively, on the point $x_0$, are arbitrary. Similarly for $w$ and its representing section $v$.
\begin{equation}\label{eqDaDimostrareMODIF1-bis}
\xymatrix{
**[l]\zeta\in J^1{\P}\ar@{->>}[rr]^{\Psi}\ar[dr]&&  V^\ast  \Q^I \otimes_{ \Q^I}   \Lambda^{n}M\ar[dl]\\
&**[l]q^I\in \Q^I \ar[d] &**[r] J^1V\Q\ni w\ar[l]\\
&\Q\ar[d]^\pi& V\Q\ar[l] \\
&**[l]x_0\in M\ar@{.>}@/^3pc/[uu]^{j_1(s)}\ar@{.>}@/^1pc/[u]^{s} \ar@{.>}@/^4pc/[uulu]^{j_1(p)} \ar@{.>}@/_1pc/[ru]_{v}\ar@{.>}@/_5pc/[ruu]_{j_1(v)}&
}
\end{equation}
What really matter now is that, for \emph{every} $x\in M$ we have
\begin{eqnarray}
    p(x) &\in &V^\ast_{s(x)}  \Q_x \otimes_{ \Q_x}   \Lambda^{n-1}_xM\, ,\label{eqPiIcs}\\
    v(x)& \in& V_{s(x)}  \Q_x \, ,\label{eqViIcs}
\end{eqnarray}
i.e., \eqref{eqPiIcs} can be naturally paired with  \eqref{eqViIcs}.
Consequently,
\begin{equation}\label{vector-dens}
\left.<v,p>\right|_x:=    <v(x) , p(x) > \ \in  \Lambda^{n-1}_xM\, , \quad x\in M\, ,
\end{equation}
defines a    vector density on $M$.
Now we are in position  to define the pairing \eqref{inf-evaluation}, i.e., the value of $\Psi(\zeta)$ on $w$, by  taking the divergence (exterior derivative) of \eqref{vector-dens} at the point $x_0$, viz.
\begin{equation}\label{inf-evaluation-d}
    <w , \Psi(\zeta) > := \left. \left( \mbox{\rm d} <v , p > \right) \right|_{x_0} \
    \in \Lambda_{x_0} ^{n}M\, ,
\end{equation}
and the first part of the proof is complete.\par
In order to   check that \eqref{inf-evaluation-d} is well--defined, it is worth recalling that $p$ (resp., $v$) denotes a $\Lambda^{n-1}M$--valued vertical form (resp., a vertical vector field) on $\Q$ \emph{along the graph of $s$}. As such, they can be written as
\begin{equation}\label{eqPiEVuInCoord}
p=p^\mu\delta\varphi\otimes \dens{\mu}\, , \quad v=v\frac{\partial}{\partial \varphi}\, ,
\end{equation}
respectively,\footnote{Recall that we dropped the upper index $K$ from $\varphi^K$; had we not, the coefficient $v$ in \eqref{eqPiEVuInCoord} above would keep a lower index $K$, thus distinguishing it from the vector $v$.} bearing in mind that, by evaluating at $x\in M$, one gets
\begin{equation}
p(x)=p^\mu(x)\left.\delta\varphi\right|_{s(x)}\otimes \left.\dens{\mu}\right|_x\, , \quad v(x)=v(x)\left.\frac{\partial}{\partial \varphi}\right|_{s(x)}\, .\label{eqPiEVuInCoord2}
\end{equation}
Observe that in \eqref{eqPiEVuInCoord} there is no evidence  of the section $s$, which appears only in the evaluation   \eqref{eqPiEVuInCoord2} of \eqref{eqPiEVuInCoord}  at $x$. Nevertheless, since $\delta\varphi$ and $\frac{\partial}{\partial \varphi}$ are dual each other, once they are paired, there will be no trace left of the section $s$. So, by pairing     $p$   with $v$, one gets a genuine  $(n-1)$--form on $M$, i.e.,   independent on $s$ and, as such, it can be differentiated:
\begin{equation}\label{inf-evaluation-d-Coord}
d\langle p,v\rangle=(v\partial_\mu p^\mu+p^\mu \partial_\mu v)\dd^n x\, .
\end{equation}
Last formula \eqref{inf-evaluation-d-Coord} provides us with a   description of the right--hand side of \eqref{inf-evaluation-d} in terms of the coordinates of $v$ and $p$ given in \eqref{eqPiEVuInCoord}. Recall now that  the first jet $j_1(v)$ of $v$ (resp., $j_1(p)$ of $p$) is uniquely determined by the functions $(v,v_\mu)$, where $v_\mu=\partial_\mu v$ (resp., $(p^\mu,p^\mu_\nu)$, where $p^\mu_\nu=\partial_\nu p^\mu$), so that the paring \eqref{inf-evaluation-d} reads
\begin{equation}\label{eq_PrimaCorrCOORD}
(v(x_0),v_\mu(x_0)), (p^\mu(x_0),p^\mu_\nu(x_0)) \longmapsto (v(x_0) p^\mu_\mu(x_0)+p^\mu (x_0)v_\mu (x_0) )\dd_{x_0}^n\x\, .
\end{equation}
Above coordinate expression \eqref{eq_PrimaCorrCOORD} shows that the coordinate--free formula \eqref{inf-evaluation-d} is well--defined, since its right--hand side depends only on the coordinates of $\zeta$ and $w$, and not on their respective extensions $j_1(p)$ and $j_1(v)$. However, \eqref{inf-evaluation-d} defines $\Psi$ only over the point $x_0$, so that the next step is to let  the point $x_0$ vary in \eqref{eq_PrimaCorrCOORD}, thus obtaining the expression
\begin{equation}\label{eqPsiRistretta}
\Psi_s: (p^\mu ,p^\mu_\nu)\longmapsto\left( p^\mu_\mu\delta \varphi +p^\mu \delta \varphi^\mu\right)\otimes \dd^n x
\end{equation}
of the \emph{restriction} $\Psi_s$  of $\Psi$ to the graph $j_1s$. Nevertheless, it is evident from \eqref{eqPsiRistretta} that
\begin{equation}
\Psi_s(\zeta)=\Psi_{s^\prime}(\zeta)
\end{equation}
whenever $s$ and $s^\prime$ have the same first--jet $q^I$ in $x_0$. So, the same formula \eqref{eqPsiRistretta} can be taken as the defining formula for the \emph{global} $\Psi$,
thus concluding the second part of the proof.\par
The last part is almost self--evident. Take an element $\eta\in V^\ast  \Q^I \otimes_{ \Q^I}   \Lambda^{n}M$ attached to the point $q^I$. Then,     $\Psi^{-1}(\eta)$ is the submanifold of $(J^1\P)_{q^I}$ described by the equations
\begin{eqnarray}
p^\mu &=&\textrm{const.}\quad \forall \mu\, ,\\
p^\mu _\mu&=&\textrm{const.}\,  ,
\end{eqnarray}
which, complemented with the equations of $(J^1\P)_{q^I}$, are precisely the same equations  which define    a degeneracy leaf of $\widetilde{\omega}^I$ (see Section \ref{ToyStep3}).
\end{proof}
Observe that the coordinate formula \eqref{eqPsiRistretta} of the canonical identification $\Psi$ is formally identical to the formula \eqref{thetaISenzaTilde} defining $\theta^I$.
\subsection{The constrained case}\label{sec52}
Theorem \ref{thEgr} can be easily adapted to the case when a constraint $\C$ is given in the infinitesimal configuration bundle $\Q^I$. More precisely, let  $\C\subseteq \Q^I$ be  a sub--bundle,   consider  its vertical bundle $V\C\longmapsto\C$, and  observe that
\begin{itemize}
\item the base $\C$ of $V\C$ is contained into the base $\Q$ of $V\Q$;
\item the generic fibre $T_{q^I}\C_{x^0}$ of $V\C$ is a linear subspace of the fibre $T_{q^I}\Q^I_{x^0}$ of $V\Q$.
\end{itemize}
These two facts are summarised by   the    commutative diagram \eqref{eqRestrizVertConstr}, where the lower inclusion corresponds to the restriction of the base and the upper one formalises the corresponding reduction of the fibres:
\begin{equation}\label{eqRestrizVertConstr}
\xymatrix{
V\C\ar@{^(->}[r] \ar[d] &  V\Q^I\ar[d]\\
\C\ar@{^(->}[r]   &    \Q^I\ .
}
\end{equation}
It is convenient to introduce the annihilator of $V\C$ in $\left.V^*\Q^I\right|_\C$, i.e., the subspace of the latter composed of  vertical covectors   vanishing on the former, which we shall denote by  $(V\C)^\circ  $.  For instance, if $\C$ is defined as in \eqref{constr}, then
\begin{equation}
(V\C)^\circ=\Span{\delta C_a\mid a=1,\ldots,k}\, ,
\end{equation}
i.e., as a module of sections, $(V\C)^\circ$ is generated by the $\delta C_a$'s, and we shall always assume that $\rank(V\C)^\circ=k$ or, in other words, that the constraints are independent.\par
 Let
 \begin{equation}\label{eqDefOmegaTildeCi}
\widetilde{\omega}^I_\C:=\left.\widetilde{\omega}^I\right|_{J^1{\P}|_\C}.
\end{equation}

\begin{corollary}\label{ConsEQthEgr}
 The restriction   $\Psi|_\C$  respects the fibrations over $\C$, i.e., it makes
\begin{equation}\label{eqDaDimostrareMODIF2}
\xymatrix{
J^1{\P}|_\C\ar@{->>}[rr]^{\Psi|_\C}\ar[dr]&&  V^\ast  \Q^I|_\C \otimes_{ \C}   \Lambda^{n}M\ar[dl]\\
&\C,&
}
\end{equation}
commutative, and the fibres of  $\Psi|_\C$ are the degeneracy leaves of   $\widetilde{\omega}^I_\C$.
\end{corollary}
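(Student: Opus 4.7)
The plan is to deduce this corollary as a direct restriction of Theorem \ref{thEgr} to the sub-bundle $J^1\P|_\C$. I would define $\Psi|_\C$ as the literal pointwise restriction of $\Psi$: for any $\zeta \in J^1\P|_\C$ the $\Q^I$-projection lies in $\C$ by construction, so that $\Psi(\zeta)$ is automatically an element of the sub-bundle $V^\ast\Q^I|_\C \otimes_\C \Lambda^n M$, and the map is well-defined. The commutativity of \eqref{eqDaDimostrareMODIF2} is then inherited from that of \eqref{eqDaDimostrareMODIF}, and surjectivity is inherited as well: for any $\eta$ in the target, the preimage $\Psi^{-1}(\eta)$ is contained in the single fibre $(J^1\P)_{q^I}$ with $q^I \in \C$ the projection of $\eta$, hence already lies in $J^1\P|_\C$.

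For the identification of fibres with degeneracy leaves of $\widetilde{\omega}^I_\C$, I plan to combine two observations. First, by Theorem \ref{thEgr} the fibre of $\Psi$ through $\zeta$ is exactly the degeneracy leaf of $\widetilde{\omega}^I$ through $\zeta$; in coordinates this leaf is cut out by fixing $(x,\varphi,\varphi_{,\mu},p^\mu,p^\lambda_{,\lambda})$ and letting the non-trace derivatives $p^\mu_{,\nu}$ vary freely, exactly as in Section \ref{ToyStep3}. Second, these degeneracy directions are purely momentum-vertical and are therefore automatically tangent to $J^1\P|_\C$, since by hypothesis $\C\subset\Q^I$ is cut out by constraints $C_a(\varphi,\varphi_{,\mu})=0$ involving only configuration coordinates. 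Consequently the degeneracy foliation of $\widetilde{\omega}^I$ restricts coherently to $J^1\P|_\C$, and its leaves coincide there with the fibres of $\Psi|_\C$.

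The main subtlety I foresee is that the pulled-back pre-symplectic form $\widetilde{\omega}^I_\C = \widetilde{\omega}^I|_{J^1\P|_\C}$ can \emph{a priori} have a strictly larger kernel than the mere restriction of the kernel of $\widetilde{\omega}^I$, due to possible Lagrange-multiplier directions aligned with the annihilator $(V\C)^\circ$ of the constraints. To handle this, I would parametrise a tangent vector $X$ to $J^1\P|_\C$ by its components $(A,A^\mu,B^\mu,B^\mu_{,\nu})$ in the jet-adapted coordinates and, using the regularity hypothesis $\rank(V\C)^\circ=k$, perform a direct coordinate computation of $\iota_X\widetilde{\omega}^I$ modulo the ideal generated by $\delta C_a$. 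The target chosen in \eqref{eqDaDimostrareMODIF2}, namely $V^\ast \Q^I|_\C$ rather than $V^\ast\C$, is precisely what absorbs these potential extra null directions, so that the matching of fibres with degeneracy leaves closes without surplus.
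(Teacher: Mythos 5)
Your overall strategy --- restricting $\Psi$ and inheriting commutativity, surjectivity and the leaf structure from Theorem \ref{thEgr} --- is exactly the paper's (its proof is a one--line appeal to that theorem), and your first two paragraphs are correct: the degeneracy directions of $\widetilde{\omega}^I$ are purely momentum--vertical, hence tangent to $J^1\P|_\C$, and the fibres of $\Psi|_\C$ are precisely the leaves of that inherited foliation.

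The difficulty is in your last paragraph. You correctly diagnose that $\ker\widetilde{\omega}^I_\C$ is \emph{a priori} larger than the restricted kernel of $\widetilde{\omega}^I$, and the computation you propose confirms it: writing $X=(A,A^\mu,B^\mu,B^\mu_{\ \nu})$ one gets $\iota_X\widetilde{\omega}^I = B^\mu_{\ \mu}\,\delta\varphi - A\,\delta j + B^\mu\,\delta\varphi_{,\mu} - A^\mu\,\delta p^\mu$, and requiring this to vanish modulo $\Span{\delta C_a}$ forces $A=A^\mu=0$ but admits the additional null directions $(B^\mu,B^\mu_{\ \mu})=\lambda^a\left(\partial C_a/\partial\varphi_{,\mu},\,\partial C_a/\partial\varphi\right)$. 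Your conclusion that the target $V^*\Q^I|_\C$ ``absorbs'' these directions is, however, backwards: $\Psi$ maps such a direction to $\lambda^a\,\delta C_a\otimes \dd^n x$, which is a \emph{nonzero} element of $(V\C)^\circ\otimes\Lambda^n M\subset V^*\Q^I|_\C\otimes_\C\Lambda^n M$. These extra null directions are therefore transverse to the fibres of $\Psi|_\C$, not contained in them; they are only quotiented out by the further projection onto $V^*\C\otimes_\C\Lambda^n M$, i.e.\ by the composite $\Psi_\C$ of Corollary \ref{corEgr}. Consequently, whenever $(V\C)^\circ\neq 0$ the fibres of $\Psi|_\C$ are \emph{strictly contained} in the degeneracy leaves of $\widetilde{\omega}^I_\C$, and the final clause of the statement holds only under the weaker reading ``leaves of the foliation inherited from the degeneracy of $\widetilde{\omega}^I$'' --- which is consistent with the paper's own remark in Section \ref{expansion} that the degeneracy foliation of the constrained form contains the gauge leaves \emph{in addition to} the original ones. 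Your argument should either establish that weaker reading, or defer the identification of the full kernel to Corollary \ref{corEgr}, where it actually belongs.
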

\begin{proof}
 A straightforward consequence of Theorem \eqref{thEgr}.
\end{proof}
Observe that diagram \eqref{eqDaDimostrareMODIF2} alone is not sufficient to define the infinitesimal phase bundle $\P_\C^I$ of a constrained theory, i.e., the analogous of Theorem \ref{ThToyModelHI} for the constrained case does not follow immediately from Corollary \ref{ConsEQthEgr}. The reason is that the leftmost space in diagram \eqref{eqDaDimostrareMODIF2} is still too big, since it contains the gauge degrees of freedom, which need to be factored out. For this purpose we consider  the  following  diagram
\begin{equation}\label{eqShortExacSeq}
\xymatrix{
(V\C)^\circ   \ar@{^(->}[r]\ar[dr] &\left.V^*\Q^I\right|_\C  \ar@{->>}[r]\ar[d]& V^*\C\ar[dl]\\
& \C\ ,&
}
\end{equation}
dual   to \eqref{eqRestrizVertConstr} and superpose it with diagram \eqref{eqDaDimostrareMODIF2} above. As a result we obtain the following sequence of two projections,
\begin{equation}\label{eqCatena}
J^1{\P}|_\C \rightarrow  V^\ast  \Q^I|_\C \otimes_{ \C}   \Lambda^{n}M  \rightarrow V^\ast  \C \otimes_{ \C}   \Lambda^{n}M
\end{equation}
which we may call $\Psi_\C$, since it descended from $\Psi$. Now we are ready to generalise Theorem \eqref{thEgr} to the constrained case.
 \begin{corollary}\label{corEgr}
 The fibres of $\Psi_\C$ defined by \eqref{eqCatena} are precisely the degeneracy leaves of the form $\widetilde{\omega}_\C^I$ defined by \eqref{eqDefOmegaTildeCi}, so that the reduced infinitesimal space $\P_{\textrm{\normalfont reduced}}^I$ defined by \eqref{inf-red} is isomorphic to
 \begin{equation}
V^\ast  \C \otimes_{ \C}   \Lambda^{n}M
\end{equation}
and the induced form  $\omega_\C^I$ is symplectic.
\end{corollary}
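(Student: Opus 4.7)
The plan is to exploit the two-step factorisation \eqref{eqCatena} of $\Psi_\C$, showing that each step peels off one species of degeneracy of $\widetilde{\omega}^I_\C$. By Corollary \ref{ConsEQthEgr}, the first map $\Psi|_\C$ already identifies, as its fibres, the degeneracy leaves inherited from the ambient form $\widetilde{\omega}^I$ on $J^1\P$ (the ``kinematic'' leaves of \eqref{defInfPhasBund}, along which only the non-trace components of $\partial_\nu p^{\mmu\lambda}$ vary). The second map in \eqref{eqCatena}, dual to the inclusion $V\C \hookrightarrow V\Q^I|_\C$ of the exact sequence \eqref{eqShortExacSeq}, has kernel $(V\C)^\circ \otimes_\C \Lambda^n M$, which locally is the span of the ``Lagrange-multiplier'' covectors $\lambda^a \delta C_a$ of \eqref{delta-tilde-L}.

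The heart of the argument is to identify this kernel with the \emph{additional} degeneracies of $\widetilde{\omega}^I_\C$ that appear upon restricting the base from $J^1\P$ to $J^1\P|_\C$. I would verify this in coordinates adapted to independent local constraint equations $C_a(\varphi_{\mmu}) = 0$, paralleling the discussion around \eqref{delta-tilde-L}--\eqref{eqEL-constr}. A tangent vector to $J^1\P|_\C$ that is not already in the kernel of the ambient $\widetilde{\omega}^I$ but lies in the kernel of the restricted form $\widetilde{\omega}^I_\C$ must, through $\Psi|_\C$, produce a vertical covector on $\Q^I$ pairing trivially with every vector tangent to $\C$ --- equivalently, an element of $(V\C)^\circ$. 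Conversely, every element of $(V\C)^\circ$, written as $\lambda^a \delta C_a$, is the $\Psi|_\C$-image of such an extra degeneracy direction. Hence the fibres of the composition $\Psi_\C$ coincide with the full degeneracy distribution of $\widetilde{\omega}^I_\C$.

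Combining the two stages, the quotient $J^1\P|_\C/\mathrm{deg}(\widetilde{\omega}^I_\C)$ --- which by the definition \eqref{inf-red} of $\P^I_{\textrm{reduced}}$ as the reduction of $\P^I_\C$ by $\mathrm{deg}(\omega^I_\C)$ coincides with $\P^I_{\textrm{reduced}}$ --- is canonically isomorphic to the image $V^\ast \C \otimes_\C \Lambda^n M$ of $\Psi_\C$. Since the factorisation has been arranged to quotient out precisely the degeneracy of $\widetilde{\omega}^I_\C$, the induced 2-form $\omega^I_\C$ on the reduced space is non-degenerate by construction, hence symplectic. The main obstacle is the second paragraph: ruling out any ``hidden'' degeneracies of $\widetilde{\omega}^I_\C$ beyond the kinematic leaves and the annihilator directions. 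This reduces to a finite-dimensional kernel computation in any chart adapted to $\C$, using the explicit expression \eqref{eqDefOmegaI} together with the vanishing of $\delta C_a$ on vectors tangent to $\C$.
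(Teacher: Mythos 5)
Your argument is correct and follows exactly the route the paper intends: the two--stage factorisation \eqref{eqCatena}, with Corollary \ref{ConsEQthEgr} disposing of the first projection and the annihilator $(V\C)^\circ$ from \eqref{eqShortExacSeq} accounting for the kernel of the second. The paper's own proof consists of the single word ``Straightforward,'' so your write--up --- in particular the coordinate verification that the degeneracy of $\widetilde{\omega}^I_\C$ contains nothing beyond the kinematic leaves and the conormal directions $\lambda^a\,\delta C_a$ --- supplies precisely the details the authors omit, along the same lines.
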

\begin{proof}
 Straightforward.
\end{proof}

\section{The Poincar\'e--Cartan form for higher--order Lagrangian theories}\label{SecPoinCart}
The symplectic two--form \eqref{omega} on the \emph{infinitesimal} phase bundle ${\cal P}^I$ which we employed in Section \ref{SecToyModel} to write down the dynamics of a first--order Lagrangian theory is, strictly speaking, an     $(n+1)$--form, being, in fact, vector--density--valued. A very important structure, related to this one, is  an $(n+1)$--form defined on the {\em phase bundle} ${\cal P}$, firstly introduced by the founders of the Calculus of Variations (Caratheodory, Hermann Weyl, DeDonder, Dedecker) and later exploited in 1974 by one of us (JK) to define the so--called \emph{multi--symplectic} approach to canonical field theory \cite{multi, multi1}. Independently, Pedro Luis Garcia considered similar structures (see e.g.~\cite{MR0243768,MR0406246,MR0337224,MR0256681}). Later on, our multi--symplectic approach was used by many authors (see e.g.~\cite{GIM,GIM2,MR1694063,ForgerRomero05,Vey}).
This structure can be regarded as an analog of the last term of the so--called ``Tulczyjew triple'', namely $T^*T^*Q$ (see also \cite{2014arXiv1406.6503G} on this concern). It is entirely covered by the \emph{symplectic} structure discussed in this paper. To begin with, we first  clarify the basic properties of the multi--symplectic structure, whose negligence   has led many authors to critical errors.
\subsection{First--order Lagrangians: a reminder}
In classical, non--relativistic Mechanics, the Poicar\'e--Cartan form is a convenient tool to formulate the Hamiltonian description of the dynamics in a way which is Galilei--invariant. In fact, the infinitesimal symplectic form \eqref{eqDefOmegaI}, in the case of a   $1$--dimensional base $M$, reads
\begin{equation}\label{Hamilt-1}
    \omega^I = \left(  \delta  \dot{p} \wedge \delta q
+ \delta p \wedge \delta   \dot{q}
 \right) \otimes   \mbox{\rm d}t  \, ,
\end{equation}
where the space--time coordinates have been replaced by the unique time parameter $t$, the ``dot'' denoting the (unique) time derivative,  and the field variables $\varphi^K$ by the configuration variables $q^K$. Because  $\dim M= 1$, the momentum has only one component: $p_K = p_K^{\ \ 1}$. (As usual, we skip the index $K$ labelling the degrees of freedom of the system when it does not lead to any contradiction.) The corresponding canonical form \eqref{thetaISenzaTilde} reads
\begin{equation}\label{Hamilt-2}
    \theta^I = \left(  \dot{p}  \delta q
+  p  \delta   \dot{q}
 \right) \otimes   \mbox{\rm d}t  \,  .
\end{equation}
The Lagrangian density ${\cal L} = L {\rm d}t$ generates the dynamics ${\cal D} \subset {\cal P}$ according to the equation $\delta {\cal L} = \left. \theta^I \right|_{\cal D}$ (see Corollary \ref{corELeqsAreLagrSubsHI}) which, written down in terms of coordinates, reads
\begin{equation}\label{Hamilt-3}
    \delta L(q,\dot{q}) =  \dot{p}  \delta q
+  p  \delta   \dot{q} \, ,
\end{equation}
or, equivalently,
\begin{equation}\label{eqLagrFOMech}
\left\{\begin{array}{ccc}p & = & \frac{\partial L}{\partial \dot{q}}\, , \\\dot{p} & = & \frac{\partial L}{\partial q}\, .\end{array}\right.
\end{equation}
In terms of control--response relations, this means that in the $4N$--dimensional symplectic space ${\cal P}^I_x$, which is  parametrized by the  coordinates $(q,\dot{q},p ,\dot{p})$, we have chosen $(q,\dot{q})$ as control parameters. With this choice, the $2N$--dimensional Lagrangian submanifold ${\cal D}$ is described by \eqref{Hamilt-3}. The na\"ive Hamiltonian approach consists in replacing the velocities $\dot{q}$ by the momenta $p$ in the role of control parameters. Then the velocities become the response parameters and the corresponding description of dynamics follows {\em via} the   Legendre transformation
\begin{equation}\label{eqCompLagrTransFO}
p\delta \dot{q}=\delta (p\dot{q})-\dot{q}\delta p
\end{equation}
which, plugged into \eqref{Hamilt-3}, yields
\begin{equation}\label{Hamilt-4}
    - \delta \left( p\dot{q} - L(q,\dot{q}) \right) =  \dot{p}  \delta q -
    \dot{q} \delta  p \, .
\end{equation}
Observe that, in order to perform the Legendre transformation,   everything needs to be calculated on ${\cal D}$, i.e.,~the velocity has to be expressed in terms of the new control parameters $(p,q)$, and this can be accomplished by using the dynamics \eqref{eqLagrFOMech}. Substituting  $\dot{q} = \dot{q}(q,p)$ into the left hand side of \eqref{Hamilt-4} we finally obtain
\begin{equation}\label{Hamilt-5}
    - \delta H(p,q) =  \dot{p}  \delta q -
    \dot{q} \delta  p \, ,
\end{equation}
or, equivalently,
\begin{equation}\label{eqHamFOMech}
\left\{\begin{array}{ccc}\dot{q} & = & \frac{\partial H}{\partial {p}}\, , \\\dot{p} & = & -\frac{\partial H}{\partial q}\, .\end{array}\right.
\end{equation}
Such an approach, which is very convenient from a computational point of view, does not possess an intrinsic counterpart, since it highly depends upon the choice of the reference frame. Indeed, the ``velocity'' is not a geometric object in the bundle ${\cal Q}$. When we pass to another reference frame all the quantities used above  transform in an odd way. To illustrate this phenomenon consider, for instance, the transformation of the above structure under the Galilei transformation.
\begin{example}
Suppose that in one reference frame we have:
\begin{eqnarray*}
% \nonumber to remove numbering (before each equation)
  L &=& L(t, q, \dot{q}) = \frac m2 {\dot q}^2 - U(t,q) \, , \\
  p &=& \frac{\partial L}{\partial {\dot q}} = m{\dot q} \, , \\
  H &=&  p{\dot q} - L = \frac m2 {\dot q}^2 + U(t,q) =
  \frac 1{2m} p^2 + U(t,q)   \,  ,
\end{eqnarray*}
where $U = U(t, q)$ is a potential. Now, let us perform the same construction in another reference frame, moving with velocity $V$ with respect to the previous frame. The new position variable equals
\[
        Q(t) = q(t) - V \cdot t \, ,
\]
and we have
\begin{eqnarray*}
% \nonumber to remove numbering (before each equation)
  {\widetilde L} &=& {\widetilde L}(t, q, \dot{q}) = \frac m2 {\dot Q}^2 - U(t,Q + tV)
  = \frac m2 {\dot Q}^2 - {\widetilde U}(t,Q) \, , \\
  P &=& \frac{\partial {\widetilde L}}{\partial {\dot Q}} = m{\dot Q} =
  m{\dot q} - mV = p - mV \, , \\
  {\widetilde H} &=&  P{\dot Q} - {\widetilde L} = \frac 1{2m} P^2 + {\widetilde U}(t,q)  =
  \frac 1{2m} \left( p - mV \right)^2 + {\widetilde U}(t,q) = H - pV + \frac m2 V^2 \, .
\end{eqnarray*}
A cheap trick to collect both cases into a single, invariant structure consists in considering the  following (degenerate) $2$--form defined on the whole phase bundle:
\begin{equation}\label{eqToyMultiSimpaticaForma}
\Omega:=dp\wedge dq -dH\wedge dt \, .
\end{equation}
For some purposes one considers also its primitive, contact $1$--form $\Theta$ defined by
\begin{equation}\label{eqToyMultiSimpaticaForma-Th}
\Theta:=p dq -H dt \, .
\end{equation}
Observe now that, in the new reference frame, we have
\begin{eqnarray*}
  {\widetilde \Theta} &=& P \mbox{\rm d}Q - {\widetilde H} \mbox{\rm d}t
  = (p - mV) \mbox{\rm d} (q - tV)
   - \left( \frac 1{2m} \left( p - mV \right)^2 + {\widetilde U}
   \right) \mbox{\rm d}t
   \\
   &=& p \mbox{\rm d} q - H \mbox{\rm d} t - mV
   \mbox{\rm d} q + \frac m2 V^2 \mbox{\rm d} t = \Theta  - mV
   \mbox{\rm d} q + \frac m2 V^2 \mbox{\rm d} t \ne \Theta
   \, , \\
  {\widetilde \Omega} &=& \mbox{\rm d} {\widetilde \Theta} =
  \mbox{\rm d} \Theta = \Omega \,  .
\end{eqnarray*}
This means that, indeed, the form $\Omega$ is Galilei--invariant. Moreover, it carries the complete description of the dynamics. Namely, a section $\sigma$ of the bundle ${\cal P}$  is declared to be \emph{compatible} with the dynamics if it satisfies the   condition
\begin{equation}\label{eqExPoinCartOrdineUno}
\sigma^\ast(X {\textrm{\scalebox{1.5}{$\lrcorner$}}}  \Omega)=0\, ,\quad \forall X\in\frak{X}(\P)\, ,
\end{equation}
which is imposed on all the sections $\sigma$ of $\P$. It is easy to check that  \eqref{eqExPoinCartOrdineUno} is equivalent to the Hamilton equations \eqref{eqHamFOMech}. To this end, we use the coordinate description of the section $\sigma$, i.e.,  $M \ni t \longmapsto (q(t), p(t)) \in {\cal P}_t$. It is easy to see that for $X= \frac{\partial}{\partial p}$ equation \eqref{eqExPoinCartOrdineUno} is, indeed, equivalent to the first equation of \eqref{eqHamFOMech}, whereas the remaining equation is obtained for $X= \frac{\partial}{\partial q}$.
\end{example}
Now, we pass to a first--order field theory, i.e., we replace $t\leftrightarrow x^\mu$, $q \leftrightarrow\varphi$, and $p \leftrightarrow p^\mu$, and we take ${\cal L}=L{\rm d}^n x$, with $L=L(x^\mu,\varphi,\varphi_\mu)$. The Lagrange equations \eqref{eqLagrFOMech} are now replaced by  \eqref{defMom}--\eqref{eqEL} or, equivalently, by the unique equation  \eqref{eqQuasiEulLag}:
\begin{equation}\label{eqlagrangePrimoOrdineComplete}
   \delta L(\varphi,\varphi_\mu)  =
    j  \delta \varphi
   +  p^\mu  \delta   \varphi_{, \mu} \, .
\end{equation}
In terms of control--response relations, this means that in the symplectic space ${\cal P}^I_x$, which is  parametrized by the  coordinates $(\varphi,\varphi_\mu,p^\mu , j)$, we have chosen $(\varphi,\varphi_\mu)$ as control parameters, whereas $(p^\mu , j)$ are the response parameters. With this choice, the Lagrangian submanifold ${\cal D}$ is described by \eqref{eqlagrangePrimoOrdineComplete}. The na\"{\i}ve Hamiltonian approach consists in replacing the role of ``velocities'' $\varphi_{, \mu}$ and the momenta $p^\mu$ as control and response parameters. For this purpose we use the following formula:
\begin{equation}\label{eqCompLagrTransFOfields}
  p^\mu  \delta   \varphi_{, \mu} =
  \delta (p^\mu    \varphi_{, \mu}) - \varphi_{, \mu} \delta p^\mu \, ,
\end{equation}
in analogy  with  \eqref{eqCompLagrTransFO}. When plugged into \eqref{eqlagrangePrimoOrdineComplete}, it yields
\begin{equation}\label{Hamilt-field-1}
    - \delta \left( p^\mu    \varphi_{, \mu} - L(\varphi,\varphi_{, \mu}) \right) =
    j   \delta \varphi - \varphi_{, \mu} \delta p^\mu \, .
\end{equation}
To complete the Legendre transformation, everything needs to be calculated ``on shell'', i.e.,~on the dynamics submanifold ${\cal D}\subset {\cal P}^I$. This means that the velocities $\varphi_{, \mu}$ have to be expressed in terms of the new control parameters $(\varphi,p^\mu)$, with the help of equations \eqref{defMom}--\eqref{eqEL}. Substituting  $\varphi_{, \mu} = \varphi_{, \mu}(\varphi,p^\mu)$ into the left--hand side of \eqref{Hamilt-field-1} and denoting
\begin{equation}\label{eqDefAcca}
    H = H (\varphi,p^\mu) :=
    p^\mu    \varphi_{, \mu} - L (\varphi , \varphi_{, \mu} ) \, ,
\end{equation}
we finally obtain
\begin{equation}\label{Hamilt-field-2}
    - \delta H(\varphi,p^\mu) =
    j   \delta \varphi  - \varphi_{, \mu} \delta p^\mu \,  ,
\end{equation}
which is equivalent to the following system of PDEs:
\begin{eqnarray}
% \nonumber to remove numbering (before each equation)
   \varphi_{, \mu} &=& \frac{\partial H}{\partial p^\mu} \, ,
   \label{H-1} \\
   j =  \partial_\mu p^\mu &=& - \frac{\partial H}{\partial \varphi}  \, .
  \label{H-2}
\end{eqnarray}
Many authors consider \eqref{H-1}--\eqref{H-2} the field--theoretic analogues  of the Hamilton equations \eqref{eqHamFOMech} and call the generating function $H$ the ``field Hamiltonian''. We stress, however, that it has {\em nothing to do} with what the physicists call the Hamiltonian---a quantity which measures the amount of energy carried by the field configuration and which is the generating function of the dynamics with respect to a completely different control mode!\par

Even if computationally appealing, the above construction depends heavily upon its non--geometric ingredients. In particular, splitting the jet $(\varphi ,\varphi_{, \mu})$ into the field $\varphi$ and the derivatives $\varphi_{, \mu}$ is completely artificial because the latter do not constitute any geometric object. The field derivatives are defined with respect to a trivialization of the bundle ${\cal Q}$ (i.e.,~with respect to a choice of coordinates $\varphi^K$ on it) and there is no simple transformation law which would describe how the  \emph{formulae} \eqref{H-1}--\eqref{H-2} transform under a change of the trivialization (parametrization). This corresponds to the non--invariance of the Hamiltonian particle dynamics with respect to the Galileian transformations.\par

There is, nevertheless, a way to construct an invariant, geometric object which corresponds to the above construction: the Poincar\'e--Cartan form or   the ``multi--symplectic'' $(n+1)$--form, analogous with \eqref{eqToyMultiSimpaticaForma}:
\begin{equation}\label{Eul-Poin}
    \Omega:= \mbox{\rm d} p^\mu \wedge \mbox{\rm d}  x^1 \wedge \cdots
    \underset{{\mu^{\textrm{th}}{\textrm{ place}}}}{\wedge \underbrace{ \mbox{\rm d} \varphi  } \wedge}\cdots \wedge \mbox{\rm d}  x^n  - \mbox{\rm d} H \wedge
    \mbox{\rm d}  x^1 \wedge \cdots
    \wedge \mbox{\rm d}  x^n\, .
\end{equation}
\begin{theorem}\label{thMultiSimpatico}
The form \eqref{Eul-Poin}
is defined unambiguously  on the phase bundle ${\cal P}$, i.e., it does not depend upon the choice of its trivialization and the choice of coordinates. Moreover, field equations \eqref{H-1}--\eqref{H-2} are equivalent to the condition \eqref{eqExPoinCartOrdineUno} imposed on sections $\sigma$ of ${\cal P}$.
\end{theorem}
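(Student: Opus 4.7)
The plan is to split the claim into two parts: (a) the trivialization-independence of $\Omega$, and (b) the equivalence of the multi-symplectic equation \eqref{eqExPoinCartOrdineUno} with the Hamiltonian system \eqref{H-1}--\eqref{H-2}. For both parts it is convenient to first notice that $\Omega$ is exact: introducing the Poincar\'e--Cartan $n$-form on $\P$,
\[
    \Theta_{\text{PC}} := p^\mu\, \mathrm{d}\varphi \wedge \dens{\mu} \; - \; H\,\mathrm{d}^n x \ ,
\]
a direct computation (using $\mathrm{d}x^\mu \wedge \dens{\mu} = \mathrm{d}^n x$ and repositioning $\mathrm{d}\varphi$ past the volume factors as in \eqref{Eul-Poin}) identifies $\Omega = \mathrm{d}\Theta_{\text{PC}}$. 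Thus it is enough to track $\Theta_{\text{PC}}$ up to exact terms.

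For part (a), I would consider a fibrewise change of trivialization $\varphi^K \mapsto \widetilde{\varphi}^K(x,\varphi)$ of $\Q$. The momenta transform tensorially in the fibre indices, while $H = p^\mu_K\varphi^K_{,\mu} - L$ picks up extra contributions proportional to the derivatives of $\widetilde{\varphi}$ with respect to $x$, in exact parallel with the Galilei example computed just above the theorem. Those extra contributions organise themselves into a \emph{total differential}: one shows that
\[
    \widetilde{\Theta}_{\text{PC}} \; - \; \Theta_{\text{PC}} \; = \; \mathrm{d}\Xi
\]
for some explicit $(n-1)$-form $\Xi$ built from the transition map (in the Galilei case, $\Xi = mVq - \tfrac{m}{2}V^2 t$). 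This immediately gives $\mathrm{d}\widetilde{\Theta}_{\text{PC}} = \mathrm{d}\Theta_{\text{PC}}$, proving that $\Omega$ is intrinsically defined on $\P$.

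For part (b), I would test condition \eqref{eqExPoinCartOrdineUno} against the two vertical vector fields $\partial/\partial p^\nu$ and $\partial/\partial\varphi$. Using $\Omega = \mathrm{d}p^\mu \wedge \mathrm{d}\varphi \wedge \dens{\mu} - \mathrm{d}H \wedge \mathrm{d}^n x$ and the standard interior-product rules, one obtains
\[
    \tfrac{\partial}{\partial p^\nu} {\textrm{\scalebox{1.5}{$\lrcorner$}}}\, \Omega \; = \; \mathrm{d}\varphi \wedge \dens{\nu} \; - \; \tfrac{\partial H}{\partial p^\nu}\,\mathrm{d}^n x \ , \qquad \tfrac{\partial}{\partial\varphi} {\textrm{\scalebox{1.5}{$\lrcorner$}}}\, \Omega \; = \; -\mathrm{d}p^\mu \wedge \dens{\mu} \; - \; \tfrac{\partial H}{\partial\varphi}\,\mathrm{d}^n x \ .
\]
Pulling back by a section $\sigma: x \mapsto (x,\varphi(x),p^\mu(x))$ and using $\sigma^\ast(\mathrm{d}\varphi) = \varphi_{,\mu}\,\mathrm{d}x^\mu$, $\sigma^\ast(\mathrm{d}p^\mu) = \partial_\nu p^\mu\,\mathrm{d}x^\nu$, and $\mathrm{d}x^\mu \wedge \dens{\nu} = \delta^\mu_\nu\,\mathrm{d}^n x$, the two resulting equations are exactly \eqref{H-1} and \eqref{H-2}. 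Horizontal test fields $X = \partial/\partial x^\mu$ give only linear combinations of these, and a dimension count (the two vertical directions exhaust the vertical cotangent space of $\P$ modulo $M$) shows that no further conditions arise.

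The main obstacle is the rigorous verification in part (a) that $\widetilde{\Theta}_{\text{PC}} - \Theta_{\text{PC}}$ is exact for a general fibre-preserving change of coordinates, since $H$ is defined implicitly through the Legendre transform and its transformation law is not purely tensorial. The cleanest route is probably to do the computation on $J^1\Q$ using the intrinsic Poincar\'e--Cartan form $L\,\mathrm{d}^n x + (\partial L/\partial\varphi_{,\mu})(\mathrm{d}\varphi - \varphi_{,\nu}\mathrm{d}x^\nu)\wedge\dens{\mu}$ associated to $\L$ and then transport the invariance to $\P$ through the Legendre map \eqref{defMom}--\eqref{eqEL}; the intrinsicness of the Euler--Lagrange equations, which was already established via Corollary \ref{corELeqsAreLagrSubs}, guarantees that the outcome is coordinate-free.
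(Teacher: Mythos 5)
Your part (b) coincides with the paper's own argument: contracting $\Omega$ with the vertical fields $\partial/\partial p^\nu$ and $\partial/\partial\varphi$ and pulling back along $\sigma$ reproduces \eqref{H-1} and \eqref{H-2}, and the restriction to vertical test fields is legitimate --- though the paper justifies it slightly more directly than your ``linear combination plus dimension count'', by observing that $\sigma^\ast(X\lrcorner\Omega)$ vanishes for \emph{every} section whenever $X$ is tangent to the graph of $\sigma$, since $\sigma^\ast\Omega$ is an $(n+1)$--form on the $n$--dimensional base. For part (a) you genuinely diverge: the paper offers no computation at all, asserting invariance ``by direct inspection'' with a reference to \cite{KT79}, whereas you propose to exhibit $\Omega$ as $\mathrm{d}\Theta_{\text{PC}}$ with $\Theta_{\text{PC}}=p^\mu\,\mathrm{d}\varphi\wedge(\partial_\mu\lrcorner\,\mathrm{d}^n x)-H\,\mathrm{d}^n x$ and to control the primitive; that route is sound, and your closing suggestion --- transporting the intrinsic first--order Poincar\'e--Cartan form $L\,\mathrm{d}^nx+(\partial L/\partial\varphi_{,\mu})(\mathrm{d}\varphi-\varphi_{,\nu}\mathrm{d}x^\nu)\wedge(\partial_\mu\lrcorner\,\mathrm{d}^nx)$ from $J^1\Q$ to $\P$ through the Legendre map \eqref{defMom} --- is in fact the cleanest way to close it. One refinement is worth recording: for a bona fide fibred change of coordinates with the Lagrangian density $\L$ held fixed, $\Theta_{\text{PC}}$ is \emph{strictly} invariant (it is the push--forward of an intrinsic form on $J^1\Q$), so no $(n-1)$--form $\Xi$ is needed at all; the exact discrepancy $\mathrm{d}\Xi$ appears only when, as in the paper's Galilei example, the Lagrangian itself is simultaneously replaced by one differing from it by a total divergence. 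Distinguishing these two situations would make your part (a) both shorter and sharper; as written, proving invariance only up to $\mathrm{d}\Xi$ is still sufficient for $\Omega=\mathrm{d}\Theta_{\text{PC}}$ to be well defined, so I see no gap, only a step (the exactness of the discrepancy, or better its vanishing) that you have sketched rather than carried out --- exactly the step the paper also omits.
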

\begin{proof}
We stress that the particular ingredients of   formula \eqref{Eul-Poin}, i.e.,  $p^\mu$, ${\rm d} \varphi$ and $H$, {\em do} depend upon trivialisation, like in Mechanics. Nevertheless,  $\Omega$ does \emph{not}---it can be easily checked  by a direct inspection (it was proved in \cite{KT79}). This is an analog of the fact that in Mechanics both the Hamiltonian $H=H(t,p)$ and the form $\mbox{\rm d} q$ depend upon the choice of the reference frame (trivialization of the space--time, treated as a bundle over the time axis) but, miraculously, the 2--form \eqref{eqToyMultiSimpaticaForma} is unambiguously defined.\par
Concerning field equations, it is sufficient to take a vertical $ X\in\frak{X}(\P)$, since  \eqref{eqExPoinCartOrdineUno} is identically satisfied by \emph{all} sections  $\sigma$, if  the  $X$'s appearing in it are assumed to be   tangent to the graphs of such sections. Choosing
\[
    X = \frac{\partial}{\partial p^\mu} \, ,
\]
we see that  \eqref{eqExPoinCartOrdineUno} implies \eqref{H-1}, whereas for
\[
    X = \frac{\partial}{\partial \varphi} \, ,
\]
we see that  \eqref{eqExPoinCartOrdineUno} implies \eqref{H-2}, which ends the proof.
\end{proof}

Na\"{\i}vely, one could think that the first part of \eqref{Eul-Poin}, namely the $(n+1)$--form
\begin{equation}\label{poli}
     \wp := \mbox{\rm d} p^\mu \wedge \mbox{\rm d}  x^1 \wedge \cdots
    \underset{{\mu^{\textrm{th}}{\textrm{ place}}}}{\wedge \underbrace{ \mbox{\rm d} \varphi  } \wedge}\cdots \wedge \mbox{\rm d}  x^n  \, ,
\end{equation}
is nothing but a better version of the canonical two--form \eqref{omega}, i.e.,
$ \omega = \left( \delta p^\mu \wedge \delta \varphi \right) \otimes \dens{\mu} $, where the covectors $\delta p^\mu$ and $\delta \varphi$, which are defined on $V{\cal P}$ only,  have been upgraded to the status  of regular covectors on ${\cal P}$ and, finally, the tensor product ``$\otimes$'' was replaced by the exterior product ``$\wedge$''. This analogy is the departing  point of the so called ``poli--symplectic'' approach, where objects like \eqref{poli} are used as the  basic building blocks of the theory. We stress, however, that the very notion of such a ``$2$--vertical'' form (terminology of DeDonder and Weyl) has no sense because $2$--vertical forms get mixed with $1$--vertical ones if we change the trivialisation of ${\cal Q}$. Only the specific combination \eqref{Eul-Poin} is invariant (see \cite{Ded53} for the detailed discussion).

\begin{remark}
  The field energy $E$ (or the corresponding density ${\cal E}:= E \cdot \mbox{\rm d}^n x$)
  is a generator of the dynamics of the time evolution of Cauchy data and has nothing to  do with the function $H$ above. To define the field energy, one needs to choose a foliation of the  space--time by $(n-1)$--dimensional leaves, parametrised by the time variable $t=x^n$. The corresponding Legendre transformation consists in replacing {\em not all the derivatives}, but only the time derivative $\varphi_{, n}$, by the corresponding momentum as a control parameter $p^n$ and retaining the   remaining space derivatives as control.
\end{remark}

\subsection{The Poincar\'e--Cartan form for theories with higher order Lagrangians}
For higher--order field theories we take  $\Q$  as in \eqref{eqDefQ}, and use the coordinates \eqref{eqDefQ_coord}. Recall that, in this case, the infinitesimal configuration bundle $\Q^I$ is \emph{not} $J^1\Q$, but rather its submanifold (see \eqref{eqIncluItJetSpaces}). So, given a $k\Th$ order Lagrangian $\L= L \cdot {\rm d}^nx$, with
\begin{equation}
L=L (x^\mu,\phi, \underset{1\leq |\overline{\mu}|\leq k }{\underbrace{\ldots,\phi_{\overline{\mu}},\ldots}})\, ,
\end{equation}
we regard $L$ as a function on $J^1\Q$, i.e.,
%
%is given, we need to consider an its prolongation $\widetilde{\L}=\widetilde{L}\mbox{\rm d}  x^n$ to $J^1\Q$, viz.
\begin{equation}
L=L (x^\mu,\phi, \underset{1\leq |\overline{\nu}|\leq k-1,|\mmu|=k-1 }{\underbrace{\ldots,\phi_{\overline{\nu}},\ldots,\phi_{\overline{\mu},\lambda}}})\, .
\end{equation}
Consider the function
\begin{equation}\label{sm-h}
    h = \sum_{ |\overline{\mu}|= k}p^{\mmu} \varphi_{\mmu}- {L} \ ,
\end{equation}
which is the Legendre transformation of $L$ with respect to the highest order momenta:
\begin{equation}\label{sm-h-1}
    h = h(x^\mu,\phi, \underset{1\leq |\overline{\nu}|\leq k-1,|\mmu|= k }{\underbrace{\ldots,\phi_{\overline{\nu}},\ldots,p^{\mmu}}}) \, .
\end{equation}
This means that the field equation \eqref{pierwsze} has been solved with respect to the highest order derivatives $\varphi_{\mmu}$, $|\mmu|= k $, and their value in definition \eqref{sm-h}  has been expressed in terms of the remaining variables:
\[
    \varphi_{\mmu}= \varphi_{\mmu}(x^\mu,\phi, \underset{1\leq |\overline{\nu}|\leq k-1,|\mmu|= k }{\underbrace{\ldots,\phi_{\overline{\nu}},\ldots,p^{\mmu}}}) \, .
\]
Observe that, by  replacing the highest--order derivatives by the highest--order momenta in the formula \eqref{theta^I}, we get  a new one--form
\begin{eqnarray}\label{theta^H}
% \nonumber to remove numbering (before each equation)
  {\theta}^H
   &=& \left( j \delta \varphi
  + j^\mu   \delta \varphi_\mu +
  j^{ {\mu_1}{\mu_2}} \delta \varphi_{{\mu_1}{\mu_2}}
  + \cdots - \varphi_{{\mu_1} \dots {\mu_{k}} }
   \delta  j^{{\mu_1} \dots {\mu_{k}}} \right)
  \otimes
     \mbox{\rm d}^n x
\end{eqnarray}
such that
\begin{equation}\label{Th-H}
\theta^I= \theta^H + \delta \sum_{ |\overline{\mu}|= k}p^{\mmu} \varphi_{\mmu} \otimes
     \mbox{\rm d}^n x\, .
\end{equation}
Hence, field equations  \eqref{defDynamicsHI} can be rewritten in the following way:
\begin{equation}\label{defDynamicsHI-h}
\left. \theta^H \right|_{\cal D} = \left. \theta^I  \right|_{\cal D} - \delta \sum_{ |\overline{\mu}|= k}p^{\mmu} \varphi_{\mmu}\otimes
     \mbox{\rm d}^n x = \delta {\cal L} - \delta \sum_{ |\overline{\mu}|= k}p^{\mmu} \varphi_{\mmu}\otimes
     \mbox{\rm d}^n x =
- \delta h \otimes
     \mbox{\rm d}^n x
\end{equation}
or, equivalently,
\begin{eqnarray}
% \nonumber to remove numbering (before each equation)
  \varphi_{{\mu_1} \dots {\mu_{k}}}
  &=& \frac{\partial h}{\partial  p^{({\mu_1} \dots {\mu_{k}})}}
  - 0\, ,\label{pierwsze-h}\\
  p^{({\mu_1}\dots{\mu_{k-1}})}
  &=& - \frac{\partial h}{\partial \varphi_{{\mu_1}\dots {\mu_{k-1}}}} -
  \partial_\lambda p^{{\mu_1}\dots {\mu_{k-1}} \lambda } \, , \label{nastepne-h} \\
  \dots &=& \dots \, , \nonumber\\
  p^{ ({\mu_1}{\mu_2})}
  &=& -\frac{\partial h}{\partial \varphi_{{\mu_1}{\mu_2}}}
  - \partial_\lambda p^{{\mu_1}{\mu_2}\lambda } \, ,\nonumber\\
  p^\mu
   &=& -\frac{\partial h}{\partial \varphi_{\mu}}
   -\partial_\lambda p^{\mu \lambda }\, ,\label{ostatnje-h}
    \\
    0 &=& -\frac{\partial h}{\partial \varphi}
    - \partial_\lambda p^{\lambda}\,  .\label{eqELHI-h}
\end{eqnarray}

Now, in analogy with \eqref{eqDefAcca},  define the ``Hamiltonian'':
\begin{equation}
 {H}:=\sum_{0\leq |\overline{\mu}|\leq k-1} p^{\mmu\lambda}    \varphi_{\mmu,\lambda}- {L}\, ,
\end{equation}
or, equivalently,
\begin{equation}
 {H}:= \sum_{0\leq |\overline{\nu}|\leq k-1} p^{\nnu}\phi_{\nnu}+ \left( \sum_{ |\overline{\mu}|= k}p^{\mmu} \varphi_{\mmu}- {L} \right)
 = \sum_{0\leq |\overline{\nu}|\leq k-1} p^{\nnu}\phi_{\nnu}+ h \, .
\end{equation}
The function $H$ depends on $J^{k-1}\Phi$ and all the momenta, i.e.,~is defined on the phase bundle in both versions of the theory, i.e.,~${\cal P}$ and ${\cal S}$.
\begin{equation}
H=H (x^\mu,\phi, \underset{1\leq |\overline{\nu}|\leq k-1,0\leq|\mmu|\leq k }{\underbrace{\ldots,\phi_{\overline{\nu}},\ldots,p^{\mmu}}})\, .
\end{equation}

In analogy with \eqref{Eul-Poin}, we define the ``multi--symplectic'' $(n+1)$--form
\begin{equation}\label{Eul_Poin_HO}
   \widetilde{ \Omega}:= \sum_{0\leq|\mmu|\leq k-1}\mbox{\rm d} p^{\mmu\lambda} \wedge \mbox{\rm d}  x^1 \wedge \cdots
    \underset{{\lambda^{\textrm{th}}{\textrm{ place}}}}{\wedge \underbrace{ \mbox{\rm d} \varphi_{\mmu}  } \wedge}\cdots \wedge \mbox{\rm d}  x^n  - \mbox{\rm d} \widetilde{H} \wedge
    \mbox{\rm d}  x^1 \wedge \cdots
    \wedge \mbox{\rm d}  x^n\ .
\end{equation}
\begin{theorem}\label{thMultiSimpatico_HO}
The form \eqref{Eul_Poin_HO}
is defined unambiguously  on the phase bundle ${\cal P}$, i.e.,~it does not depend upon the choice of trivialisation and coordinates. Moreover, a section $\sigma$ of ${\cal P}$ satisfies the ``multi--symplectic equation'' \eqref{eqExPoinCartOrdineUno} if and only if it is:
\begin{enumerate}
\item holonomic,
\item   satisfies the above  field equations \eqref{pierwsze-h}--\eqref{eqELHI-h}.
\end{enumerate}
\end{theorem}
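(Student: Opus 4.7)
The plan is to adapt the two--step strategy used for Theorem~\ref{thMultiSimpatico}: first, prove the invariance of \eqref{Eul_Poin_HO} on $\P$; second, check that the multi--symplectic equation \eqref{eqExPoinCartOrdineUno} is equivalent to holonomy \eqref{pocz}--\eqref{kon} together with the Euler--Lagrange system \eqref{pierwsze-h}--\eqref{eqELHI-h}. The invariance step is the direct higher--order counterpart of the argument given for \eqref{eqToyMultiSimpaticaForma}: both the summands $\mbox{\rm d} p^{\mmu\lambda}\wedge\mbox{\rm d}\varphi_{\mmu}\wedge\dens{\lambda}$ and the term $\mbox{\rm d}\widetilde{H}\wedge\mbox{\rm d}^nx$ transform non--tensorially under fibrewise reparametrisation of $\Q=J^{k-1}\Phi$, but the specific dependence of $\widetilde{H}$ upon \emph{all} the momenta $p^{\mmu\lambda}$---not only the top--order ones entering the Legendre transformation \eqref{sm-h} that defines $h$---is finely calibrated so as to cancel out those transformations. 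The mechanism behind the cancellation is the chain of gauge transformations discussed in Section~\ref{momentum-gauge}, encoded in the identity \eqref{Th-H}.

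For the equivalence with the field equations I would contract $\widetilde{\Omega}$ with the vertical coordinate vector fields on $\P$ and pull back along a section $\sigma$. Taking $X=\partial/\partial p^{\mmu\lambda}$ with $0\leq|\mmu|\leq k-1$, only the $(\mmu,\lambda)$--term of the first sum in \eqref{Eul_Poin_HO} survives and yields, up to signs, a relation of the form
\[
\partial_\lambda (\sigma^\ast\varphi_{\mmu})=\frac{\partial \widetilde{H}}{\partial p^{\mmu\lambda}}\,.
\]
By the definition of $\widetilde{H}$ as the partial Legendre transform \eqref{sm-h-1}, the right--hand side equals the next jet coordinate $\varphi_{\mmu\lambda}$ when $|\mmu\lambda|<k$, so that the holonomy conditions \eqref{pocz}--\eqref{kon} are recovered; for $|\mmu\lambda|=k$ the same equation produces \eqref{pierwsze-h}. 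Contracting with $X=\partial/\partial \varphi_{\mmu}$ gives the dual relation $\partial_\lambda(\sigma^\ast p^{\mmu\lambda})=-\partial\widetilde{H}/\partial \varphi_{\mmu}$ which, once $\widetilde{H}$ is split into $h$ and the linear pieces $\sum p^{\mmu\lambda}\varphi_{\mmu,\lambda}$, becomes exactly \eqref{nastepne-h}--\eqref{eqELHI-h}. Horizontal components of a generic $X\in\mathfrak{X}(\P)$ contribute nothing to $\sigma^\ast(X\,\lrcorner\,\widetilde{\Omega})$, so the vertical check suffices.

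The hardest part will be disentangling the content of a single contraction $(\partial/\partial p^{\mmu\lambda})\,\lrcorner\,\widetilde{\Omega}$, which simultaneously carries a purely kinematic piece (the holonomy condition for $|\mmu|<k-1$) and a genuinely dynamical one (the top--order Euler--Lagrange equation for $|\mmu|=k-1$). The cleanest way to manage this, I believe, is to re--express $\widetilde{\Omega}$ through \eqref{Th-H}: up to horizontal factors $\mbox{\rm d}x^\mu$, the form equals $\mbox{\rm d}\theta^H$, so the equivalence between \eqref{eqExPoinCartOrdineUno} and the generating formula \eqref{defDynamicsHI-h} falls out by the very mechanism already used in Corollary~\ref{corELeqsAreLagrSubsHI}. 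Notably, this plan does not require any further symplectic reduction: the multi--symplectic equation operates directly on the unreduced phase bundle $\P$, the passage to $\P^I$ being implicitly enforced by the holonomy half of \eqref{eqExPoinCartOrdineUno}.
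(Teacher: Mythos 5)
Your proposal is correct and follows essentially the same route as the paper: invariance is argued by analogy with the first--order case, and the field equations are recovered by contracting $\widetilde{\Omega}$ with the vertical coordinate fields $\partial/\partial\varphi_{\mmu}$ and $\partial/\partial p^{\mmu\lambda}$, the former yielding \eqref{nastepne-h}--\eqref{eqELHI-h} and the latter the holonomy constraints together with \eqref{pierwsze-h}. The only caveat is your closing aside identifying $\widetilde{\Omega}$ with $\mathrm{d}\theta^H$ ``up to horizontal factors'': $\theta^H$ lives on the reduced bundle $\P^I$ and is written in the currents $j^{\mmu}$ rather than the raw momenta, so that identification would itself require the symplectic reduction which, as you rightly note, the direct computation avoids.
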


\begin{proof}
We stress again that the particular ingredients of the formula \eqref{Eul_Poin_HO} {\em do} dependent upon trivialization, like in Mechanics or in the first--order theory, but  $\Omega$ does not.%, which can be easily checked  by a direct inspection.
\par
To prove equivalence of the ``multi--symplectic equation'' \eqref{eqExPoinCartOrdineUno} with the field equations, we are allowed again to consider only  \emph{vertical} vectors. Begin with $X=\frac{\partial}{\partial \phi}$. Then, the quantity $\sigma^\ast(X  {\textrm{\scalebox{1.5}{$\lrcorner$}}}  \Omega)$ becomes

%\begin{equation}\label{eqExPoinCartOrdineUno}
%\sigma^\ast(X\lrcorner\Omega)=0,\quad \forall X\in\frak{X}(\P),
%\end{equation}

 \begin{equation}
0= - \partial_\lambda p^{\lambda} - \frac{\partial  {H}}{\partial \phi}=- \partial_\lambda p^{\lambda} - \frac{\partial  {h}}{\partial \phi}  \, ,
\end{equation}
i.e.,~equation  \eqref{eqELHI-h} is reproduced. For $X=\frac{\partial}{\partial \phi_{\mu}}$, where $1\leq|\mmu|\leq k-1$, we obtain
 \begin{equation}
0= - \partial_\lambda p^{\mmu\lambda} - \frac{\partial  {H}}{\partial \phi_{\mmu}}=- \partial_\lambda p^{\mmu\lambda} - \frac{\partial  {h}}{\partial \phi_{\mmu}} - p^{\mmu} \,  ,
\end{equation}
i.e.,~equations \eqref{nastepne-h} -- \eqref{ostatnje-h} are recovered.\par

Pass now to the derivatives with respect to momenta. For $X=\frac{\partial}{\partial p^{\mmu\lambda}}$, with $|{\mmu\lambda} | \leq k-1$, we obtain
\begin{equation}
0= \partial_\lambda \phi_{\mmu}-\phi_{\mmu\lambda} \, ,
\end{equation}
i.e.,~the holonomy constraints \eqref{pocz}--\eqref{kon} are reconstructed.
Finally, for the highest order momenta: $X=\frac{\partial}{\partial p^{\mmu}}$, with $|\mmu|=k$, we obtain
\begin{equation}
0=\phi_{\mmu}-\frac{\partial H}{\partial p^{\mmu}}=\phi_{\mmu}-\frac{\partial h}{\partial p^{\mmu}} \, ,
\end{equation}
i.e.,~equation  \eqref{pierwsze-h} is recovered.
\end{proof}

\section{Appendices}

The claim of   Proposition \ref{ProposizioneFondamentale}, which was fundamental in our analysis,   was left without proof, since  in local coordinates it reduces to a trivial shuffling of variables (see formula \eqref{eqJ1VugualeVJ1COORD}). Nevertheless, even if coordinates were thoroughly  exploited to provide workable formulae, the framework we presented here has an invariant character, and  it cannot be concluded without   an global proof of the key identification \eqref{eqJ1VugualeVJ1}.

\subsection{A proof of the identification $VJ^1=J^1V$}\label{appVJ1UgJ1V}

Such an equivalence can be framed in the general context of (nonlinear) differential operator between fibreed manifolds, nonlinear PDEs, and their (infinitesimal) symmetries (see \cite{MR2813504} and references therein).
\subsubsection{Geometry of (nonlinear) differential operators}
Let $\pi:\Q\to M$ and $\eta: \P\to M$ be two smooth bundles over the same base manifold $M$. Denote by $J^1\Q$ the first jet prolongation of $\pi$. A smooth fibreed mapping
\begin{equation}
\xymatrix{
J^1\Q\ar[rr]^\kappa\ar[dr]_{\pi_1} & & \P\ar[dl]^\eta\\
& M
}
\end{equation}
is called a \emph{(nonlinear) $1\St$ order differential operator between $\pi$ and $\eta$}.\par
The reason of this definition is obvious. A (local) section $s\in\Gamma(\pi)$ determines a (local) section $j_1(s)$ of $J^1\Q$; since $\kappa$ maps (local) sections into (local) sections, the composition $\kappa\circ j_1(s)$ is a (local) section of $\eta$. In other words, $\kappa$ determines the map
\begin{eqnarray*}
\Gamma(\pi) &\stackrel{\Delta_\kappa}{\longrightarrow} & \Gamma(\eta),\\
s &\longmapsto & \kappa\circ j_1(s).
\end{eqnarray*}
Let now $\phi$ be a fibre coordinate on $\Q$ (which for simplicity is supposed of rank one); then a section $s$ corresponds to a function $\phi=\phi(x^1,\ldots,x^n)$, and its $1\St$ jet $j_1(s)$ to an $(n+1)$--tuple of functions $\phi,\phi_\mu$, where $\phi_\mu:=\frac{\partial\phi}{\partial x^\mu}$.\par
On the other hand (assuming $\eta$ to be of rank one as well), $\Delta_\kappa(s)$ is determined by the unique function
\begin{equation}
%M\ni (x^1,\ldots,x^n) \longmapsto
\kappa\left(x^1,\ldots,x^n,\phi(x^1,\ldots,x^n),\frac{\partial\phi}{\partial x^1}(x^1,\ldots,x^n),\ldots,\frac{\partial\phi}{\partial x^n}(x^1,\ldots,x^n)\right),
\end{equation}
which resembles the familiar expression of a (nonlinear) $1\St$ order differential operator. The ``{inverse}'' holds as well, in the following sense.
\begin{proposition}\label{propRappresentabilita}
Let $\widetilde{U}\subseteq \Q$ be an open sub--bundle over an open subset  $U\subseteq M$, with abstract fibre $\R^m$, where $m=\rank \Q$, and $\widetilde{V}\subseteq \P$ an open sub--bundle over the same $U$, with abstract fibre $\R^l$, where $l=\rank  P$. Regard elements of $C^\infty(U,\R^m)$   as local sections of $\pi$. Let $s\in C^\infty(U,\R^m)$ such that its image $\Delta(s)$ sits in $C^\infty(U,\R^l)$; hence, $\Delta(s)$ identifies with an $l$--tuple of functions $(\kappa^1\ldots,\kappa^l)$ on $U$, and $s$ identifies with an $m$--tuple of functions $(\phi^1,\ldots,\phi^m)$ on $U$. Suppose that, for all $\widetilde{U}, \widetilde{V}$, and $s$, it holds
\begin{equation}
%M\ni (x^1,\ldots,x^n) \longmapsto
\kappa^j=\kappa^j\left(x^1,\ldots,x^n,\phi^k(x^1,\ldots,x^n),\ldots,\frac{\partial\phi^k}{\partial x^i}(x^1,\ldots,x^n),\ldots \right).
\end{equation}
Then a unique smooth fibreed mapping $\kappa:J^1\Q\to \P$ exists, such that $\Delta=\Delta_\kappa$.
%
%
%Let $U\subseteq M$ be an open subset simultaneously trivializing both $\pi $ and $\eta$; hence, $\Gamma(\pi|_U)$ identifies with the set $C^\infty(U,F)$ of smooth mappings between $U$ and the abstract fibre $F$ of $\pi$, the expression of $\Delta(s)|_U$ is a
%
%
%
%
% Let $\Delta:\Gamma(\pi)\to\Gamma(\eta)$ be a map such that,
%
%
 \end{proposition}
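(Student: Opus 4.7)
My plan is to define $\kappa$ pointwise, exploiting the hypothesis that $\Delta$ is ``representable'' in coordinates, and then show that the resulting map is a well--defined smooth fibreed morphism, unique with the required property. The key observation is the elementary ``section through a jet'' fact: given $\theta\in J^1\Q$ with $\pi_1(\theta)=x_0$, there exists a local section $s\in\Gamma(\pi)$ defined near $x_0$ with $j_1(s)(x_0)=\theta$ (in coordinates, the affine section $\phi^k(x)=\phi^k(\theta)+\phi^k_i(\theta)(x^i-x^i_0)$ does the job). This allows me to set
\begin{equation*}
\kappa(\theta):=\Delta(s)(x_0).
\end{equation*}

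The first step is well--definedness. If $s,s'$ are two local sections through $\theta$, then in a trivialization around $x_0$ the sections correspond to $m$--tuples $(\phi^k),(\phi'^k)$ with $\phi^k(x_0)=\phi'^k(x_0)$ and $\partial\phi^k/\partial x^i(x_0)=\partial\phi'^k/\partial x^i(x_0)$. The coordinate hypothesis says $\Delta(s)(x_0)$ and $\Delta(s')(x_0)$ are obtained by evaluating the same smooth functions $\kappa^j$ on these identical data, so they coincide. The second step is to verify that $\kappa$ is a smooth fibreed morphism. Fibreedness is immediate: $\eta(\kappa(\theta))=\eta(\Delta(s)(x_0))=x_0=\pi_1(\theta)$. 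Smoothness is read off the local expression: in the chosen coordinates, $\kappa$ is given by
\begin{equation*}
(x^i,\phi^k,\phi^k_i)\longmapsto\bigl(x^i,\kappa^j(x^i,\phi^k,\phi^k_i)\bigr),
\end{equation*}
which is smooth by the hypothesis on $\kappa^j$. The third step is the identity $\Delta_\kappa=\Delta$: for any $s\in\Gamma(\pi)$ and any $x\in U$, $\Delta_\kappa(s)(x)=\kappa(j_1(s)(x))=\Delta(s)(x)$, using our pointwise definition applied to the tautological choice of representative section.

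Uniqueness is the cleanest step and essentially forces globality. If $\kappa'\colon J^1\Q\to\P$ is any fibreed map satisfying $\Delta_{\kappa'}=\Delta$, then for every $\theta\in J^1\Q$, choosing a representing section $s$ with $j_1(s)(x_0)=\theta$, one has $\kappa'(\theta)=\kappa'\bigl(j_1(s)(x_0)\bigr)=\Delta_{\kappa'}(s)(x_0)=\Delta(s)(x_0)=\kappa(\theta)$. This simultaneously proves uniqueness and shows that the local construction above does not depend on the chosen trivializations, so the locally defined $\kappa$'s glue to a well--defined global smooth fibreed map.

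The genuine technical obstacle I anticipate is the globality issue: the hypothesis is formulated chart by chart, so there is something to check when one passes from one trivialization $(\widetilde U,\widetilde V)$ to another. Two distinct viewpoints resolve this. Either one invokes the uniqueness argument above to conclude that the chartwise $\kappa$'s must agree on overlaps (since they represent the same $\Delta$); or, more directly, one notes that the pointwise prescription $\kappa(\theta):=\Delta(s)(x_0)$ is manifestly intrinsic -- it refers only to the intrinsic objects $\Delta$, $\theta$, and the freedom in choosing $s$, which was eliminated in the well--definedness step. Either way, the compatibility of local definitions is automatic, and no separate gluing estimate is required.
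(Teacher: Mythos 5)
Your proof is correct, and it is the natural argument; note that the paper itself states Proposition \ref{propRappresentabilita} \emph{without} proof, so there is no authorial argument to compare against. Your pointwise prescription $\kappa(\theta):=\Delta(s)(x_0)$ for any representing local section $s$, the well--definedness via the hypothesis that $\Delta(s)(x_0)$ factors through the first--jet data, the smoothness read off from the local expression $(x^i,\phi^k,\phi^k_i)\mapsto\kappa^j(x^i,\phi^k,\phi^k_i)$, and the uniqueness/gluing argument are exactly the standard route, and each step is sound. The only point worth making explicit is that you are reading the displayed hypothesis as asserting not merely that $\Delta(s)(x)$ depends only on $j_1(s)(x)$, but that this dependence is through \emph{smooth} functions $\kappa^j$ of the jet coordinates; that is the intended (and necessary) reading, since mere pointwise factorization would give a set--theoretic $\kappa$ whose smoothness would then require a separate argument. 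With that reading fixed, your observation that uniqueness forces the chartwise constructions to agree on overlaps disposes of the gluing issue cleanly, and nothing is missing.
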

Proposition \ref{propRappresentabilita} suggests that, in order  to construct a smooth fibreed mapping from $J^1\Q$ to $\P$, one may equivalently look for a $1\St$ order differential operator from $\Gamma(\pi)$ to $\Gamma(\eta)$.

\subsubsection{Lifting of vertical symmetries} Let now $v:V\Q\to \Q$ be the vertical tangent bundle, and $X$ an its section. In coordinates, $X=\psi\frac{\partial}{\partial \phi}$. Look for a vertical vector field $\widetilde{X}$ on $J^1\Q$, i.e., a section of $v_1:VJ^1\Q\to J^1\Q$, such that
\begin{itemize}
\item $\widetilde{X}$ is a lifting of $X$,
\item $\widetilde{X}$ is an infinitesimal contact transformation of $J^1\Q$.
\end{itemize}
Then, it can be easily proved that
\begin{equation}\label{eqFormulaSoll}
\widetilde{X}=\psi\frac{\partial}{\partial \phi} + D_\mu(\psi) \frac{\partial}{\partial \phi_\mu},
\end{equation}
where $D_\mu=\frac{\partial}{\partial x^\mu}+\phi_\mu \frac{\partial}{\partial \phi}$ is the (truncated) total derivative operator: the first condition dictates the coefficient of $\frac{\partial}{\partial \phi}$, while the second that of $ \frac{\partial}{\partial \phi_\mu}$. Let us give a geometric interpretation to the lifting procedure
\begin{equation}
X\longmapsto\widetilde{X}\label{eqLiftProc}
\end{equation}
To this end, consider the 1--parameter group of transformations $\psi_t$ determined by $X$. Being $X$ vertical, each $\psi_t$ is a fibreed morphism over the identity $\mathrm{id}_M$, i.e., $s_t:=\psi_t\circ s$ is a family of sections of $\pi$, such that $s_0=s$ (somebody calls it a \emph{vertical homotopy} \cite{MR3103143}). All these sections can be, so to speak, ``{jettified}'', thus obtaining a family of sections $j_1(s_t)$ of $J^1\Q\to M$. Fix a point $\x=(x^1,\ldots,x^n)\in M$, and compute the velocity of the curve $t\longmapsto j_1(s_t)(\x)$ at zero:
\begin{equation*}
v_{\x}:=\left.  \frac{\mathrm{d} j_1(s_t)(\x)}{\mathrm{d} t} \right|_{t=0}.
\end{equation*}
By construction, $v_{\x}$ is a vertical tangent vector on $J^1\Q$, at the point $j_1(s)(\x)$, and direct computations show that $$\widetilde{X}_{j_1(s)(\x)}=v_{\x}.$$

\subsubsection{Proof of the equivalence}

We use now   \eqref{eqLiftProc} to define a $1\St$ order differential operator between $V\Q\longrightarrow M$ and $VJ^1\Q\longrightarrow M$.

\begin{equation}
\xymatrix{
V\Q\ar[r]^v\ar[d]  & \Q\ar[dl]^\pi\ar@{.>}@/_1pc/[l]_{\check{X}}\\
M\ar@{.>}@/_1pc/[ur]_{s_X}\ar@{.>}@/^1pc/[u]^{X}&
}\label{diagrammaunpoaffollato}
\end{equation}
Observe that the bundle $V\Q\longrightarrow M$ is the composition $\pi\circ v$: according, a section of  $V\Q\longrightarrow M$ is, in a sense, the composition
%
%A section $X$ can be written as
%
$X=\check{X}\circ s_X$ of two sections, where $s_X:=v\circ X$ is a uniquely defined section of $\pi$ and  $\check{X}$ is a section of $v$ which is \emph{not} unambiguously defined: just its restriction  $\check{X}|_{\textrm{Im}\, s_X}$ is uniquely determined by $X$ (see diagram \eqref{diagrammaunpoaffollato}). So, one may work with the pair $(s_X,\check{X})$, instead of $X$, bearing in mind the ambiguity of $\check{X}$.\par
For instance, in local coordinates,  $\check{X}=\psi\frac{\partial}{\partial \phi}$, where only the restriction $\psi|_{\textrm{Im}\, s_X}$ is uniquely determined by $X$. Now, bearing in mind formula \eqref{eqFormulaSoll}, $\check{X}$ can be lifted to
\begin{equation}
\widetilde{\check{X}}:=\psi\frac{\partial}{\partial \phi}+D_\mu(\psi) \frac{\partial}{\partial \phi_\mu},
\end{equation}
and $s_X$ can be prolonged to a section $j_1(s_X)$ of $VJ^1\Q\longrightarrow J^1\Q$. Hence, we can produce a section $\widetilde{X}$ of $VJ^1\Q\longrightarrow M$ by putting $\widetilde{X}:=\widetilde{\check{X}}\circ j_1(s_X)$:
\begin{equation}\label{diagrammaunpoaffollato2}
\xymatrix{
VJ^1\Q\ar[r]\ar[d]  & J^1\Q\ar[dl]\ar@{.>}@/_1pc/[l]_{\widetilde{\check{X}}}\\
M\ar@{.>}@/_1pc/[ur]_{j_1(s_X)}\ar@{.>}@/^1pc/[u]^{\widetilde{X}}&
}
\end{equation}
Diagram \eqref{diagrammaunpoaffollato2} illustrates the relationship between sections $\widetilde{\check{X}}$, $j_1(s_X)$  and $\widetilde{X}$. The latter can be computed directly,
\begin{equation*}
\widetilde{X}_{\x}=\widetilde{\check{X}}_{j_1(s_X)(\x)}=\left.\psi(j_1(s_X)(\x))\frac{\partial}{\partial \phi}\right|_{j_1(s_X)(\x)}+\left.\frac{\partial(\psi\circ j_1(s_X))}{\partial x^\mu}(\x) \frac{\partial}{\partial \phi_\mu}\right|_{j_1(s_X)(\x)}
\end{equation*}
thus showing that
$\widetilde{X}$ depends only on $s_X$ and $\check{X}|_{\textrm{im}\, s_X}$, i.e., that the map $X\longmapsto\widetilde{X}$ is well--defined.\par
Observe that the fibre $V_\x \Q$ is the tangent manifold $ T\Q_\x$ of the fibre $\Q_\x$: hence, if $\phi$ is a coordinate on the abstract fibre of $\pi$, and $p$ its conjugate momentum, the section  $X$ is given, in local coordinates, by
\begin{equation*}
M\ni\x\stackrel{X}{\longmapsto} (\phi(\x),p(\x))\in V\Q.
\end{equation*}
According,
\begin{equation*}
M\ni\x\stackrel{s_X}{\longmapsto} (\x,\phi(\x) )\in Q
\end{equation*}
and $\psi\circ j_1(s_X)=p$.
Similarly, the fibre  coordinates of $VJ^1\Q\longrightarrow M$ are $(\phi,\phi_\mu,p,p_\mu)$. The section $\widetilde{X}$, in such coordinates, is given by
\begin{eqnarray}
\phi_\mu &:=& \frac{\partial\phi}{\partial x^\mu},\label{eqIcsTilde1}\\
p_\mu &:=& \frac{\partial p}{\partial x^\mu}.\label{eqIcsTilde2}
\end{eqnarray}
\begin{corollary}[Proof of the identification]
The lifting procedure \eqref{eqLiftProc} is a $1\St$ order differential operator, and the corresponding smooth fibreed mapping $\kappa: J^1V\Q\longrightarrow VJ^1\Q$ is one--to--one.
\end{corollary}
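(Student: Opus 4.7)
The plan is to reduce both claims to the coordinate description \eqref{eqIcsTilde1}--\eqref{eqIcsTilde2} of $\widetilde{X}$, invoking Proposition \ref{propRappresentabilita} for the first assertion and a direct comparison of fibre coordinates for the second. The only intrinsic check needed is that what was defined locally glues correctly; this is already guaranteed by the invariant construction of the lift \eqref{eqFormulaSoll} via the flow of the vertical vector field $X$.

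First I would verify that $X\mapsto\widetilde{X}$ is indeed a $1\St$ order differential operator from $\Gamma(V\Q\to M)$ to $\Gamma(VJ^1\Q\to M)$. A section $X$ of $V\Q\to M$ is, in local bundle coordinates, the pair of functions $(\phi(x),p(x))$, and the construction leading to diagram \eqref{diagrammaunpoaffollato2} together with the lifting formula \eqref{eqFormulaSoll} yields the induced section
\begin{equation*}
\widetilde{X}(x)=\bigl(\phi(x),\,\partial_\mu\phi(x),\,p(x),\,\partial_\mu p(x)\bigr)
\end{equation*}
in the natural coordinates $(\phi,\phi_\mu,p,p_\mu)$ on $VJ^1\Q$. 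Each component depends only on the values of $\phi,p$ and their first partial derivatives, so the hypothesis of Proposition \ref{propRappresentabilita} is satisfied, producing the smooth fibreed mapping $\kappa:J^1V\Q\longrightarrow VJ^1\Q$ such that $\Delta_\kappa$ is exactly the lifting $X\mapsto\widetilde{X}$.

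Second, I would show that $\kappa$ is a diffeomorphism by reading off its coordinate expression. The natural jet coordinates on $J^1V\Q$ are $(x^\mu,\phi,p,\phi_{,\mu},p_{,\mu})$, where $\phi_{,\mu}$ and $p_{,\mu}$ are the $1$--jet derivatives of the fibre coordinates of $V\Q$; in these variables the previous formula becomes
\begin{equation*}
\kappa\bigl(x^\mu,\phi,p,\phi_{,\mu},p_{,\mu}\bigr)=\bigl(x^\mu,\phi,\phi_{,\mu},p,p_{,\mu}\bigr),
\end{equation*}
i.e.\ precisely the shuffling \eqref{eqJ1VugualeVJ1COORD}. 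Since both bundles have the same rank over $M$ and this map is fibrewise a linear isomorphism, $\kappa$ is one--to--one (indeed, a diffeomorphism).

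The only genuinely non--routine point, which I would single out as the main obstacle, is the globalisation: one must be sure that the coordinate description just used transforms correctly under a change of fibre trivialisation of $\pi$, so that the local bijections patch to a well--defined global $\kappa$. This is, however, automatic from the intrinsic nature of the lift: the section $\widetilde{X}$ was produced in an invariant way through the flow of $X$ (cf.\ the discussion around \eqref{eqLiftProc}), and by construction $\widetilde{X}$ depends only on the pair $(s_X,\check{X}|_{\mathrm{Im}\,s_X})$, i.e.\ only on $X$ itself. Consequently, the transition rules for jet coordinates and for vertical tangent coordinates are compatible under $\kappa$, and the local coordinate identification upgrades to the global canonical isomorphism \eqref{eqJ1VugualeVJ1}, completing the proof of Proposition \ref{ProposizioneFondamentale}.
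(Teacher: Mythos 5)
Your argument follows the paper's own proof essentially verbatim: you use the coordinate formulae \eqref{eqIcsTilde1}--\eqref{eqIcsTilde2} to recognise the lift as a first--order differential operator, invoke Proposition \ref{propRappresentabilita} to obtain $\kappa$, and read off injectivity from the coordinate shuffle \eqref{eqJ1VugualeVJ1COORD}. The extra remark on globalisation (that $\widetilde{X}$ depends only on $X$ through the invariant flow construction) is a welcome clarification the paper leaves implicit, but it does not change the route.
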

\begin{proof}
Formulae  \eqref{eqIcsTilde1}--\eqref{eqIcsTilde2} shows that   \eqref{eqLiftProc} is a $1\St$ order differential operator. Then   Proposition \ref{propRappresentabilita} allows to associate with it   the smooth fibreed mapping $\kappa$, and it remains to prove that $\kappa$ is one--to--one.\par
This can be accomplished locally, by observing that the fibre coordinates on $J^1V\Q$ are $(\phi,p,\phi_\mu,p_\mu)$, so  that the map $\kappa$ simply ``{flips}'' $\phi_\mu$ and $p$. In particular, $\kappa$ is (locally) one--to--one.
\end{proof}
 This \virg{flipping}, which occurs due to the interchanging of the \virg{jettification} and the \virg{verticalization} procedures,  namely,
 \begin{center}
 {\footnotesize
\begin{tabular}{c|ccccc}
$ VJ^1\Q$ &  $\underset{\textrm{fibre variable}}{\phi}$ & $\Rightarrow$  & $\underset{\textrm{jettified fibre variable}}{\phi_\mu}$ & $\Rightarrow$ &  $\underset{\textrm{vertical momenta}}{(p,p_\mu)}$ \\
\hline
 $J^1V\Q$ &  $\underset{\textrm{fibre variable}}{\phi}$ &$\Rightarrow$ & $\underset{\textrm{vertical momentum}}{p}$ & $ \Rightarrow $ & $ \underset{\textrm{jettified fibre variable \& its momentum}}{(\phi_\mu,p_\mu)}$
\end{tabular}
}
 \end{center}
can be regarded as a jet--theoretic analog of the last term of the \virg{Tulczyjew triple}.

%
%
%
%
%
% Moreover, it involves fist derivatives [spiegare meglio], i.e., it corresponds to a fibreed smooth map $J^1VE\longrightarrow VJ^1(\pi)$. It remains to show that such a map is one--to--one.
%
%
%
%

\subsection{The space of symmetric momenta}\label{app-momentum-gauge}
Here we show that the sub--bundle $\mathcal{S}$ mentioned in \ref{momentum-gauge} can be characterized intrinsically.\par Roughly speaking the symmetrisation of the momenta $p^{\mu_1\cdots\mu_{k-1}\lambda}$ with respect to the \emph{last} index $\lambda$ is a manifestation of the so--called \emph{polarization} of homogeneous polynomials. Classically, it is used, among many other things, to compute the tangent space to a quadric surface, but it keeps finding unexpected applications, especially in the framework of jet spaces (see, e.g., \cite{MorBach}). The reason is that the spaces of homogeneous polynomials   are the linear models of   the jet bundles (which are affine)
 and their polarization correspond to the immersion into \emph{nonholonomic jets}, i.e., precisely those used in our approach to higher--order theories (see, e.g., \cite{MR861121,MR989588}).

\subsection{Polarization and Spencer operator}

The simplest example of a \emph{polarization} is that of a quadratic form $Q(x)$, i.e., its    corresponding bilinear form $B$  defined by
\begin{equation}\label{eqQuasiInutile}
B(x,y):=\frac{Q(x+y)-Q(x)-Q(y)}{2}.
\end{equation}
Intrinsically, formula \eqref{eqQuasiInutile} reads
\begin{equation}\label{eqQuasiInutile2}
B=\frac{1}{2}dQ.
\end{equation}
Indeed, if $Q=\alpha x^2+\beta xy+\gamma y^2$ then its differential    $dQ=(2\alpha x+\beta y)dx+(\beta x+2\gamma y)dy $ correspond to (twice) the symmetric $2\times 2$ matrix  of the form $B$, i.e., to
\begin{equation*}
 \left(\begin{array}{cc}2\alpha & \beta \\\beta & 2\gamma\end{array}\right)\, .
\end{equation*}
The advantage of \eqref{eqQuasiInutile2} against \eqref{eqQuasiInutile} is that the former admits a straightforward generalization to cubic forms, quartic forms, etc. Indeed, for any homogeneous   polynomial   $p$ of degree $k$ in the $n$ independent variables $x_1,\ldots,x_n$, the differential  $dp$ is a linear combination of the $x_i$'s with values in the space of polynomials of  degree $(k-1)$.\footnote{Yet another way to understand this is through the so--called \emph{meta--symplectic form} (see the above cited  \cite{MorBach} on this concern).} If $V$ denotes the linear space generated by the $x_i$'s, then the operation $p\longmapsto \frac{1}{k}dp$ is nothing but the canonical inclusion
\begin{equation}\label{eqQuasiInutile3}
S^kV \subseteq S^{k-1}V \otimes_\R V \, ,
\end{equation}
where ``$S$'' stand for ``symmetric power''. Written  down   in coordinates, \eqref{eqQuasiInutile3} is the passage from a space where \emph{all} indices are symmetric to one where so are only the first $k-1$.\par In the jet--theoretic context, \eqref{eqQuasiInutile3} is the \virg{linearization} (i.e., the tangent mapping) of the embedding of the holonomic $k\Th$ jets into the nonholonomic  ones (i.e., the $1\St$ jets of $k-1\St$ jets), i.e., formula \eqref{jets-in-jets}.

\subsection{Intrinsic definition of $\cal S$}
As a preliminary observation, recall that the cotangent manifold $TW$ of a  linear space $W$ is the trivial bundle $T^*W=W\times W^*$. The key property of $Q_x$ we shall need here is that $Q_x$ is an affine bundle over $Q'_x:=(J^{k-2}\Phi)_x$, whose linear model is
\begin{equation}
W_x:=S^{k-1}T_x^*M\otimes_\R\Phi_x^* \ .
\end{equation}
%(stiamo supponendo senza ledere la generalit\`a che $\Phi$ \`e lineare).\par
%
The cotangential mapping of the canonical  projection  $Q_x\longrightarrow Q'_x$, allows to project $T^*Q_x$ over $T^*Q'_x$ as well. The generic fibre is now
\begin{equation}\label{fibra_fibrosa}
T^*(S^{k-1}T_x^*M\otimes\Phi_x^*)=W_x\times S^{k-1}T_xM\otimes_\R\Phi_x.
\end{equation}
On the other hand, by Poincar\'e duality,
\begin{equation}\label{poinc}
\stackrel{n-1}{\bigwedge}T_x^*M\equiv T_xM\otimes_\R \stackrel{n}{\bigwedge}T_x^*M.
\end{equation}
Together, \eqref{fibra_fibrosa} and \eqref{poinc} imply that  $T^*Q_x\otimes_\R \stackrel{n-1}{\wedge}T_x^*M$  projects  onto  $T^*Q'_x\otimes_\R \stackrel{n-1}{\wedge}T_x^*M$, with generic fibre
\begin{equation}\label{eqCheDefinisceEsse}
W_x\times (S^{k-1}T_xM\otimes\Phi_x)\otimes_\R T_xM\otimes\Lambda^nT_x^*M
\end{equation}
which, in view of the polarization/Spencer operator \eqref{eqQuasiInutile3} contains a canonical subspace ${\cal S}_x$ obtained by replacing  $S^{k-1}T_xM\otimes T_xM$ with  $S^kT_xM$ in \eqref{eqCheDefinisceEsse}. By arbitrariness of $x\in M$, this defines a whole bundle $\cal S$.\par
It is worth stressing the unambiguity of such a definition: even if $W_x$ is \emph{not} canonically identified with the fibre of $Q_x\longrightarrow Q'_x$ (an origin is needed),   it was shown above that  \emph{for any}   choice of $W_x$, the corresponding fibre  \eqref{eqCheDefinisceEsse}  contains a  \emph{unique} distinguished  subspace.

\subsection{List of main symbols}\label{subListSymb}
%\begin{table}[htdp]
%\caption{List of main symbols.\label{ListaSimboli}}
\begin{center}
{\footnotesize
\begin{tabular}{r|l}

$M$ & the space--time\\
$x,x_0$ &  a generic (resp., fixed) point of $M$\\
$x^\mu$  & coordinates on $M$\\
$\Q,\Phi\stackrel{\pi}{\longrightarrow}M$ & the configuration bundle for first (resp., higher) order theories\\
${\cal C}\subseteq \Q^I$ & the constrained sub--bundle\\
$C_a$ & the constraint functions\\
$\lambda^a$ & Lagrange multipliers\\
$\varphi^K$ & fibre coordinates on $\Q$\\
$V\Q$ & the vertical tangent bundle\\
$\Lambda^iM$ & the bundle of differential $i$--forms on $M$\\
$\P$ & the phase bundle.\\
$\P^I$ & the infinitesimal  phase bundle\\
$\P^I_{\cal C}$ & the constrained infinitesimal  phase bundle\\
$\P^I_{\textrm{reduced}}$ & the reduced infinitesimal  phase bundle

 \end{tabular}
 }
\end{center}
%\label{default}
%\end{table}%

\begin{center}
{\footnotesize
\begin{tabular}{r|l}

$\cal D$ & the dynamics\\
${\cal S}\subseteq P$ & the symmetric phase bundle\\
$p^\mu$ & the momenta\\
$r^\mu$ & extra momenta\\
$s^{{\mu_1} \dots {\mu_{l}}} $ & symmetric momenta\\
${\mathrm d}^n\x$ & the volume element on $M$\\
$\dens{\mu}$ & basis of vector--densities on $M$\\
$B\to M$ & a bundle over $M$\\
$B_x$ & fibre of $B$ at $x\in M$\\
$\frak{X}(B)$ & vector fields on $B$\\
$d$ & exterior derivative\\
$ {\mathrm d}$ & space--time differential\\
$\delta$ & vertical differential\\
$\partial_\mu$ & total derivative\\
$\theta$ & Liouville form\\
$\omega^I$ & pre--symplectic form\\
$\omega^I_{\cal C}$ & constrained pre--symplectic form\\
$\omega$ & symplectic form\\
$J^1\P$ & first jet--extension of $\P$\\
$\Q^I$ & the infinitesimal configuration bundle\\
$\Pi:\P\to \Q$ & canonical projection\\
${\cal L}$ & a Lagrangian density\\
$L$ & a Lagrangian function\\
$H$ & Hamiltonian function\\
$h$ & Legendre transform\\
 $j$ &   current\\ $\overline{\mu}$ & a multi--index\\
 $\varphi_{\overline{\mu}}$ & coordinates on $J^{k-1}\Q$\\
 $j^{\overline{\mu}}$ & higher--order current\\
 $\Omega$ & Poincar\'e--Cartan form\\
 $\Theta$ & primitive contact form\\

 $J^1\P\stackrel{\Psi}{\longrightarrow} V^\ast \Q^I\otimes_{\Q^I}\Lambda^n M$ & canonical surjective mapping\\
 $\zeta$ & a point of $J^1\P$\\
  $q^I$ & a point of $\Q^I$\\
 $p$ & a section of $\P$\\
 $s$ & a section of $\Q$\\
 $w$ & a vertical vector on $\Q$\\
 $v$ & a section of $V\Q$\\
 $<\, \cdot\, ,\, \cdot\, >$ & volume--forms--valued pairing of vectors and covectors\\
 $\eta$ &  a point of $V^\ast \Q^I\otimes_{\Q^I}\Lambda^n M$.

 \end{tabular}
 }
\end{center}

%\section*{Acknowledgments}
%
%The first author was partially supported by Narodowe Centrum Nauki, Poland (NCN Grant DEC-2011/03/B/ST1/02625). 
% The second author is  thankful to the   Grant Agency of the Czech Republic (GA \v CR)
%for financial support under the project P201/12/G028.

 \bibliographystyle{plainnat}
\bibliography{sympl-higher-order}

\def\polhk#1{\setbox0=\hbox{#1}{\ooalign{\hidewidth
  \lower1.5ex\hbox{`}\hidewidth\crcr\unhbox0}}}
  \def\polhk#1{\setbox0=\hbox{#1}{\ooalign{\hidewidth
  \lower1.5ex\hbox{`}\hidewidth\crcr\unhbox0}}} \def\cprime{$'$}
\begin{thebibliography}{31}
\providecommand{\natexlab}[1]{#1}
\providecommand{\url}[1]{\texttt{#1}}
\expandafter\ifx\csname urlstyle\endcsname\relax
  \providecommand{\doi}[1]{doi: #1}\else
  \providecommand{\doi}{doi: \begingroup \urlstyle{rm}\Url}\fi

\bibitem[B\"achtold and Moreno(2014)]{MorBach}
M.~B\"achtold and G.~Moreno.
\newblock Remarks on non--maximal integral elements of the cartan plane in jet
  spaces.
\newblock \emph{Journal of Geometry and Physics}, page to appear, 2014.

\bibitem[Cantrijn et~al.(1999)Cantrijn, Ibort, and de~Le{\'o}n]{MR1694063}
F.~Cantrijn, A.~Ibort, and M.~de~Le{\'o}n.
\newblock On the geometry of multisymplectic manifolds.
\newblock \emph{J. Austral. Math. Soc. Ser. A}, 66\penalty0 (3):\penalty0
  303--330, 1999.
\newblock ISSN 0263-6115.

\bibitem[Chru{\'s}ciel et~al.(2002)Chru{\'s}ciel, Jezierski, and Kijowski]{CJK}
Piotr~T. Chru{\'s}ciel, Jacek Jezierski, and Jerzy Kijowski.
\newblock \emph{Hamiltonian field theory in the radiating regime}, volume~70 of
  \emph{Lecture Notes in Physics. Monographs}.
\newblock Springer-Verlag, Berlin, 2002.
\newblock ISBN 3-540-42884-4.

\bibitem[Dedecker(1953)]{Ded53}
Paul Dedecker.
\newblock Calcul des variations, formes diff\'erentielles et champs
  g\'eod\'esiques.
\newblock In \emph{G\'eom\'etrie diff\'erentielle. {C}olloques {I}nternationaux
  du {C}entre {N}ational de la {R}echerche {S}cientifique, {S}trasbourg, 1953},
  pages 17--34. Centre National de la Recherche Scientifique, Paris, 1953.

\bibitem[Dedecker(1984{\natexlab{a}})]{MR748931}
Paul Dedecker.
\newblock Existe-t-il, en calcul des variations, un formalisme de
  {H}amilton-{J}acobi-{E}. {C}artan pour les int\'egrales multiples d'ordre
  sup\'erieur?
\newblock \emph{C. R. Acad. Sci. Paris S\'er. I Math.}, 298\penalty0
  (16):\penalty0 397--400, 1984{\natexlab{a}}.
\newblock ISSN 0249-6291.

\bibitem[Dedecker(1984{\natexlab{b}})]{MR761267}
Paul Dedecker.
\newblock Sur le formalisme de {H}amilton-{J}acobi-\'{E}. {C}artan pour une
  int\'egrale multiple d'ordre sup\'erieur.
\newblock \emph{C. R. Acad. Sci. Paris S\'er. I Math.}, 299\penalty0
  (8):\penalty0 363--366, 1984{\natexlab{b}}.
\newblock ISSN 0249-6291.

\bibitem[Forger and Romero(2005)]{ForgerRomero05}
Michael Forger and Sandro~Vieira Romero.
\newblock Covariant {P}oisson brackets in geometric field theory.
\newblock \emph{Comm. Math. Phys.}, 256\penalty0 (2):\penalty0 375--410, 2005.
\newblock ISSN 0010-3616.
\newblock \doi{10.1007/s00220-005-1287-8}.
\newblock URL \url{http://dx.doi.org/10.1007/s00220-005-1287-8}.

\bibitem[Garc{\'{\i}}a and P{\'e}rez-Rend{\'o}n(1969)]{MR0256681}
P.~L. Garc{\'{\i}}a and A.~P{\'e}rez-Rend{\'o}n.
\newblock Symplectic approach to the theory of quantized fields. {I}.
\newblock \emph{Comm. Math. Phys.}, 13:\penalty0 24--44, 1969.
\newblock ISSN 0010-3616.

\bibitem[Garc{\'{\i}}a and P{\'e}rez-Rend{\'o}n(1971)]{MR0337224}
P.~L. Garc{\'{\i}}a and A.~P{\'e}rez-Rend{\'o}n.
\newblock Symplectic approach to the theory of quantized fields. {II}.
\newblock \emph{Arch. Rational Mech. Anal.}, 43:\penalty0 101--124, 1971.
\newblock ISSN 0003-9527.

\bibitem[Garc{\'{\i}}a(1968)]{MR0243768}
Pedro~L. Garc{\'{\i}}a.
\newblock Symplectic geometry in the classical theory of fields.
\newblock \emph{Collect. Math.}, 19:\penalty0 73--134, 1968.
\newblock ISSN 0010-0757.

\bibitem[Garc{\'{\i}}a(1974)]{MR0406246}
Pedro~L. Garc{\'{\i}}a.
\newblock The {P}oincar\'e-{C}artan invariant in the calculus of variations.
\newblock In \emph{Symposia {M}athematica, {V}ol. {XIV} ({C}onvegno di
  {G}eometria {S}implettica e {F}isica {M}atematica, {INDAM}, {R}ome, 1973)},
  pages 219--246. Academic Press, London, 1974.

\bibitem[Giaquinta and Hildebrandt(1996)]{MR1368401}
Mariano Giaquinta and Stefan Hildebrandt.
\newblock \emph{Calculus of variations. {I}}, volume 310 of \emph{Grundlehren
  der Mathematischen Wissenschaften [Fundamental Principles of Mathematical
  Sciences]}.
\newblock Springer-Verlag, Berlin, 1996.
\newblock ISBN 3-540-50625-X.
\newblock The Lagrangian formalism.

\bibitem[{Gotay} et~al.(1998){Gotay}, {Isenberg}, {Marsden}, and
  {Montgomery}]{GIM}
M.~J. {Gotay}, J.~{Isenberg}, J.~E. {Marsden}, and R.~{Montgomery}.
\newblock {Momentum Maps and Classical Relativistic Fields. Part I: Covariant
  Field Theory}.
\newblock \emph{ArXiv Physics e-prints}, January 1998.
\newblock URL \url{http://arxiv.org/abs/physics/9801019}.

\bibitem[{Gotay} et~al.(2004){Gotay}, {Isenberg}, and {Marsden}]{GIM2}
M.~J. {Gotay}, J.~{Isenberg}, and J.~E. {Marsden}.
\newblock {Momentum Maps and Classical Relativistic Fields. Part II: Canonical
  Analysis of Field Theories}.
\newblock \emph{ArXiv Mathematical Physics e-prints}, November 2004.
\newblock URL \url{http://arxiv.org/abs/math-ph/0411032}.

\bibitem[{Grabowska} and {Vitagliano}(2014)]{2014arXiv1406.6503G}
K.~{Grabowska} and L.~{Vitagliano}.
\newblock {Tulczyjew Triples in Higher Derivative Field Theory}.
\newblock \emph{ArXiv e-prints}, June 2014.
\newblock URL \url{http://arxiv.org/abs/1406.6503}.

\bibitem[Kijowski(1973)]{multi}
Jerzy Kijowski.
\newblock A finite-dimensional canonical formalism in the classical field
  theory.
\newblock \emph{Comm. Math. Phys.}, 30:\penalty0 99--128, 1973.
\newblock ISSN 0010-3616.

\bibitem[Kijowski(1978)]{aff}
Jerzy Kijowski.
\newblock On a new variational principle in general relativity and the energy
  of the gravitational field.
\newblock \emph{Gen. Relativity Gravitation}, 9\penalty0 (10):\penalty0
  857--877, 1978.
\newblock ISSN 0001-7701.
\newblock \doi{10.1007/BF00759646}.
\newblock URL \url{http://dx.doi.org/10.1007/BF00759646}.

\bibitem[Kijowski(1997)]{aff2}
Jerzy Kijowski.
\newblock A simple derivation of canonical structure and quasi-local
  {H}amiltonians in general relativity.
\newblock \emph{Gen. Relativity Gravitation}, 29\penalty0 (3):\penalty0
  307--343, 1997.
\newblock ISSN 0001-7701.
\newblock \doi{10.1023/A:1010268818255}.
\newblock URL \url{http://dx.doi.org/10.1023/A:1010268818255}.

\bibitem[Kijowski and Szczyrba(1975)]{multi1}
Jerzy Kijowski and Wiktor Szczyrba.
\newblock Multisymplectic manifolds and the geometrical construction of the
  {P}oisson brackets in the classical field theory.
\newblock In \emph{G\'eom\'etrie symplectique et physique math\'ematique
  ({C}olloq. {I}nternat. {C}.{N}.{R}.{S}., {A}ix-en-{P}rovence, 1974)}, pages
  347--349. \'Editions Centre Nat. Recherche Sci., Paris, 1975.
\newblock With questions by S. Sternberg, K. Gaw{\polhk{e}}dzki and W.
  Klingenberg and replies by Kijowski.

\bibitem[Kijowski and Tulczyjew(1979)]{KT79}
Jerzy Kijowski and W{\l}odzimierz~M. Tulczyjew.
\newblock \emph{A symplectic framework for field theories}, volume 107 of
  \emph{Lecture Notes in Physics}.
\newblock Springer-Verlag, Berlin, 1979.
\newblock ISBN 3-540-09538-1.

\bibitem[Kijowski and Werpachowski(2007)]{aff1}
Jerzy Kijowski and Roman Werpachowski.
\newblock Universality of affine formulation in general relativity.
\newblock \emph{Rep. Math. Phys.}, 59\penalty0 (1):\penalty0 1--31, 2007.
\newblock ISSN 0034-4877.
\newblock \doi{10.1016/S0034-4877(07)80001-2}.
\newblock URL \url{http://dx.doi.org/10.1016/S0034-4877(07)80001-2}.

\bibitem[Krasil'shchik et~al.(1986)Krasil'shchik, Lychagin, and
  Vinogradov]{MR861121}
I.~S. Krasil'shchik, V.~V. Lychagin, and A.~M. Vinogradov.
\newblock \emph{Geometry of jet spaces and nonlinear partial differential
  equations}, volume~1 of \emph{Advanced Studies in Contemporary Mathematics}.
\newblock Gordon and Breach Science Publishers, New York, 1986.
\newblock ISBN 2-88124-051-8.

\bibitem[Krasil'shchik and Verbovetsky(2011)]{MR2813504}
Joseph Krasil'shchik and Alexander Verbovetsky.
\newblock Geometry of jet spaces and integrable systems.
\newblock \emph{J. Geom. Phys.}, 61\penalty0 (9):\penalty0 1633--1674, 2011.
\newblock ISSN 0393-0440.
\newblock \doi{10.1016/j.geomphys.2010.10.012}.
\newblock URL \url{http://dx.doi.org/10.1016/j.geomphys.2010.10.012}.

\bibitem[Moreno(2013{\natexlab{a}})]{MR3103143}
Giovanni Moreno.
\newblock On families in differential geometry.
\newblock \emph{Int. J. Geom. Methods Mod. Phys.}, 10\penalty0 (9):\penalty0
  1350042, 23, 2013{\natexlab{a}}.
\newblock ISSN 0219-8878.
\newblock \doi{10.1142/S0219887813500424}.
\newblock URL \url{http://dx.doi.org/10.1142/S0219887813500424}.

\bibitem[Moreno(2013{\natexlab{b}})]{MorenoCauchy}
Giovanni Moreno.
\newblock The geometry of the space of {C}auchy data of nonlinear {PDE}s.
\newblock \emph{Cent. Eur. J. Math.}, 11\penalty0 (11):\penalty0 1960--1981,
  2013{\natexlab{b}}.
\newblock ISSN 1895-1074.
\newblock \doi{10.2478/s11533-013-0292-y}.
\newblock URL \url{http://dx.doi.org/10.2478/s11533-013-0292-y}.

\bibitem[Saunders(1989)]{MR989588}
D.~J. Saunders.
\newblock \emph{The geometry of jet bundles}, volume 142 of \emph{London
  Mathematical Society Lecture Note Series}.
\newblock Cambridge University Press, Cambridge, 1989.
\newblock ISBN 0-521-36948-7.
\newblock \doi{10.1017/CBO9780511526411}.
\newblock URL \url{http://dx.doi.org/10.1017/CBO9780511526411}.

\bibitem[Souriau(1997)]{Sou}
J.-M. Souriau.
\newblock \emph{Structure of dynamical systems}, volume 149 of \emph{Progress
  in Mathematics}.
\newblock Birkh\"auser Boston Inc., Boston, MA, 1997.
\newblock ISBN 0-8176-3695-1.
\newblock A symplectic view of physics, Translated from the French by C. H.
  Cushman-de Vries, Translation edited and with a preface by R. H. Cushman and
  G. M. Tuynman.

\bibitem[van Brunt(2004)]{MR2004181}
Bruce van Brunt.
\newblock \emph{The calculus of variations}.
\newblock Universitext. Springer-Verlag, New York, 2004.
\newblock ISBN 0-387-40247-0.

\bibitem[{Vey}(2014)]{Vey}
D.~{Vey}.
\newblock {n-plectic Vielbein Gravity [I] DeDonder-Weyl Hamilton equations,
  simple algebraic observables}.
\newblock \emph{ArXiv e-prints}, April 2014.

\bibitem[Vitagliano(2011)]{Luca2011}
L.~Vitagliano.
\newblock On higher derivatives as constraints in field theory: a geometric
  perspective.
\newblock \emph{Int. J. Geom. Methods Mod. Phys.}, 8\penalty0 (8):\penalty0
  1687--1693, 2011.
\newblock ISSN 0219-8878.
\newblock \doi{10.1142/S0219887811005968}.
\newblock URL \url{http://dx.doi.org/10.1142/S0219887811005968}.

\bibitem[Vitagliano(2010)]{MR2647288}
Luca Vitagliano.
\newblock The {L}agrangian-{H}amiltonian formalism for higher order field
  theories.
\newblock \emph{J. Geom. Phys.}, 60\penalty0 (6-8):\penalty0 857--873, 2010.
\newblock ISSN 0393-0440.
\newblock \doi{10.1016/j.geomphys.2010.02.003}.
\newblock URL \url{http://dx.doi.org/10.1016/j.geomphys.2010.02.003}.

\end{thebibliography}

\end{document}